\setlist{nosep,leftmargin=*}
\theoremstyle{plain}
\newtheorem{theorem}{Theorem}[section]
\newtheorem{lemma}[theorem]{Lemma}
\newtheorem{proposition}[theorem]{Proposition}
\newtheorem{corollary}[theorem]{Corollary}
\theoremstyle{definition}
\newtheorem{definition}[theorem]{Definition}
\newtheorem{example}[theorem]{Example}
\theoremstyle{remark}
\newtheorem{remark}[theorem]{Remark}
\numberwithin{equation}{section}
\DeclareMathOperator{\Ann}{Ann}
\tikzset{
  vertex/.style={circle,draw,thick,fill=gray!15,minimum size=18pt,inner sep=0pt},
  edge/.style={thick},
  every label/.style={font=\small}
}
\title{\textbf{Structure and Spectral Theory of Non-Commutative and $n$-ary $\Gamma$-Semirings}}
\author{
\textbf{Chandrasekhar Gokavarapu}$^{1,2,*}$\\
\textit{${}^1$Lecturer in Mathematics, Government College (Autonomous), Rajahmundry, A.P., India}\\
\textit{${}^2$ Research Scholar, Dept. of Mathematics Acharya Nagarjuna University, Guntur, A.p.,, India}\\
\textit{Email:} \texttt{chandrasekhargokavarapu@gmail.com}\\[1.2ex]
\textbf{Dr.~D.~Madhusudhana Rao}$^{3,4}$\\
\textit{${}^3$Lecturer in Mathematics, Government College for Women(Autonomous), Guntur, A.P., India}\\
\textit{${}^4$Department of Mathematics, Acharya Nagarjuna University, Guntur, A. P., India}\\
\textit{Email:} \texttt{dmrmaths@gmail.com}\\[1.2ex]
\textit{\bf * Corresponding author: Chandrasekhar Gokavarapu}\\
\textit{\bf * Corresponding author Email: chandrasekhargokavarapu@gmail.com}
}
\date{}
\begin{document}
\maketitle

\begin{abstract}
This paper develops the structural and spectral foundations of \emph{non-commutative} and \emph{$n$-ary}~$\Gamma$-semirings, extending the commutative ternary framework established in earlier studies. 
We introduce left, right, and two-sided ideals in the non-commutative setting, derive quotient characterizations of prime and semiprime ideals, and construct corresponding $\Gamma$-Jacobson radicals. 
For general $n$-ary operations, we define $(n,m)$-type ideals and establish diagonal criteria for $n$-ary primeness and semiprimeness. 
A unified radical theory and Zariski-type spectral topology are then formulated, connecting primitive ideals with simple module representations. 
The results culminate in a non-commutative Wedderburn–Artin-type decomposition,  revealing a triadic spectral geometry which  unifies commutative and higher-arity cases.
\end{abstract}

\textbf{Keywords:} Non-commutative $\Gamma$-semiring; $n$-ary operation; prime and semiprime ideals; $(n,m)$-type ideals; radicals; Jacobson radical; spectral topology; primitive ideals.\\
\textbf{MSC (2020):} 16Y60, 16Y90, 08A30, 16N60.

\section{Introduction}

This article builds upon the foundational theory of $\Gamma$-rings, first introduced by Nobusawa \cite{Nobusawa1964} and later developed by Barnes \cite{Barnes1966}.The theory of semirings provides the algebraic basis for this work, with standard monographs by Golan \cite{Golan1999} and Hebisch \& Weinert \cite{HebischWeinert1998} defining the field. The study of $\Gamma$-semirings, initiated as a generalization of classical semirings by incorporating an external semigroup of parameters~$\Gamma$, has evolved into a versatile framework for unifying various algebraic systems. While our previous work \cite{Rao2025A, Rao2025B1,Rao2025B2, Rao2025C, Rao2025D, Rao2025E} focused on the commutative ternary case, other researchers have also investigated ideal theory in commutative $\Gamma$-semirings \cite{Fakieh2021}.
Recent developments on \emph{commutative ternary}~$\Gamma$-semirings~\cite{Rao2025A,Rao2025B1,Rao2025B2, Rao2025C,Rao2025D,RaoRaniKiran2025} have established their ideal theory, radical structure, and spectral topology, revealing close parallels with commutative algebra and algebraic geometry. 
However, the assumption of commutativity and fixed ternary arity restricts both categorical and geometric generalization.

The present paper removes these restrictions by introducing a \emph{non-commutative} and \emph{$n$-ary} framework. 
In the non-commutative regime, the order of the arguments in the ternary product $a_\alpha b_\beta c$ becomes essential, requiring separate treatments of left, right, and two-sided ideals. 
For arbitrary arity~$n$, new forms of distributivity and associativity emerge, necessitating the definition of \emph{$(n,m)$-type} ideals to capture threshold-closure phenomena across coordinates.

Our objectives are threefold:
\begin{enumerate}
\item to construct a unified ideal theory for non-commutative and $n$-ary~$\Gamma$-semirings, including generalized radicals and spectra;
\item to establish a representation-theoretic correspondence between primitive ideals and simple modules, extending the Jacobson radical concept;
\item to derive structural theorems analogous to the Wedderburn–Artin decomposition, and to interpret the resulting spectrum as a triadic geometric object.
\end{enumerate}

The paper is organized as follows.  
Section 2 recalls notation and axioms for non-commutative and $n$-ary~$\Gamma$-semirings.  
Sections 3 and 4 develop the corresponding ideal theories, radicals, and threshold invariants.  
Section 5 constructs the associated spectra and Zariski-type topology.  
Sections 6 and 7 establish the connection between primitive ideals and module representations, culminating in a non-commutative Wedderburn–Artin-type decomposition.  
Section 8 presents computational methods for finite examples, and Section 9 outlines conclusions and future directions.

\section{Preliminaries and Notation}

Throughout this paper, $\Gamma$ denotes an additive semigroup and $(T,+)$ an additive commutative semigroup with identity~$0$. 
A \emph{ternary~$\Gamma$-semiring} is a structure $(T,+,\Gamma,\mu)$ where 
\[
\mu:T\times\Gamma\times T\times\Gamma\times T\longrightarrow T,\qquad
\mu(a,\alpha,b,\beta,c)=a_\alpha b_\beta c,
\]
satisfies the following for all $a,b,c,d\in T$ and $\alpha,\beta,\gamma\in\Gamma$:
\begin{enumerate}
\item \textbf{Additivity:} $\mu(a+b,\alpha,c,\beta,d)=\mu(a,\alpha,c,\beta,d)+\mu(b,\alpha,c,\beta,d)$ in each argument;
\item \textbf{Zero absorption:} $\mu(\ldots,0,\ldots)=0$ in any coordinate;
\item \textbf{Ternary associativity:} $\mu(\mu(a,\alpha,b,\beta,c),\gamma,d,\delta,e)=\mu(a,\alpha,b,\beta,\mu(c,\gamma,d,\delta,e))$.
\end{enumerate}

The operation $\mu$ need not be symmetric: $a_\alpha b_\beta c$ and $b_\beta a_\alpha c$ may differ, giving rise to distinct \emph{left}, \emph{right}, and \emph{two-sided} ideals.  
When $\mu$ is symmetric in all arguments, the structure reduces to a commutative ternary~$\Gamma$-semiring .

\subsection*{$n$-ary~$\Gamma$-semirings}
The concept of an $n$-ary operation generalizes the binary case and was first formally studied by Dörnte \cite{Dornt1928} and Post \cite{Post1940}.
For an integer $n\ge3$, an \emph{$n$-ary~$\Gamma$-semiring} is a system $(T,+,\Gamma,\mu)$ with
\[
\mu:T^n\times\Gamma^{\,n-1}\to T,\qquad
\mu(x_1,\alpha_1,\ldots,\alpha_{n-1},x_n),
\]
additive in each $T$-argument, $0$-absorbing in every coordinate, and satisfying an $n$-ary associativity law ensuring independence of bracketing.  
The ternary case $n=3$ is obtained by restriction.  
Left, right, and two-sided ideals are defined analogously using positional closure under~$\mu$.  
We shall denote by $J_\Gamma(T)$ and $J_{\Gamma}^{(n)}(T)$ the $\Gamma$-Jacobson radicals in the ternary and $n$-ary settings, respectively.

\section{Ideal Structures in the Non-Commutative Case}

In the non-commutative setting, the distinction between left, right, and two-sided ideals is fundamental. This structure mirrors the classical theory of non-commutative rings, for which \cite{Lam2001} and \cite{GoodearlWarfield2004} are standard references. Non-commutativity introduces asymmetry between the left, right, and middle positions
in the ternary~$\Gamma$-product.  The loss of total commutativity requires a refined
notion of ideals and a re-formulation of primeness and semiprimeness that distinguishes
between the directions of multiplication.  Throughout this section $(T,+,\Gamma)$
denotes a \emph{non-commutative ternary~$\Gamma$-semiring}, i.e.
\[
\mu:T\times\Gamma\times T\times\Gamma\times T\longrightarrow T,
\qquad
\mu(a,\alpha,b,\beta,c)=a_\alpha b_\beta c,
\]
satisfying distributivity and ternary associativity in each argument but not necessarily
$a_\alpha b_\beta c=b_\beta a_\alpha c$.

\subsection{Left, Right, and Two-Sided Ideals}

\begin{definition}[Left, right, and two-sided ideals]
A nonempty subset~$I\subseteq T$ is called
\begin{enumerate}
  \item a \emph{left ideal} if
        \begin{align*}
            (I,+)&\text{ is a subsemigroup of }(T,+),\\
            a_\alpha b_\beta c\in I &\text{ whenever } a\in T,\ b\in I,\ c\in T,
            \ \alpha,\beta\in\Gamma;
        \end{align*}
  \item a \emph{right ideal} if
        \begin{align*}
            a_\alpha b_\beta c\in I &\text{ whenever } a\in T,\ b\in T,\ c\in I;
        \end{align*}
  \item a \emph{two-sided ideal} if it is simultaneously a left and right ideal.
\end{enumerate}
The ideal is \emph{proper} if $I\neq T$.
\end{definition}

\begin{lemma}[Elementary properties]
Let $I,J$ be ideals of~$T$.
\begin{enumerate}
\item The intersection of any family of left (resp.~right, two-sided) ideals is again a left (resp.~right, two-sided) ideal.
\item The sum $I+J=\{\,x+y:x\in I,\,y\in J\,\}$ is an ideal of the same type.
\item For any $S\subseteq T$, there exists a smallest left (resp.~right, two-sided) ideal
      containing~$S$, denoted $\langle S\rangle_L$ (resp.~$\langle S\rangle_R$, $\langle S\rangle$).
\end{enumerate}
\end{lemma}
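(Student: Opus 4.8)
The plan is to treat the three parts in order, with parts (1) and (3) essentially formal and part (2) carrying the only substantive computation; throughout I would argue for left ideals, since the right and two-sided cases follow by moving the distinguished coordinate (from the middle argument of $\mu$ to the third) or by conjoining the two one-sided conditions. For part (1), let $\{I_\lambda\}_{\lambda\in\Lambda}$ be a family of left ideals and set $I=\bigcap_{\lambda}I_\lambda$. First I would confirm $I\neq\emptyset$: each $I_\lambda$ is a nonempty left ideal, so choosing any $b\in I_\lambda$ and applying zero absorption gives $\mu(0,\alpha,b,\beta,c)=0$, which is a legitimate instance of left-ideal absorption (with $0\in T$); hence $0\in I_\lambda$ for every $\lambda$ and therefore $0\in I$. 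Additive closure of $(I,+)$ is inherited coordinatewise from the subsemigroups $(I_\lambda,+)$, and for absorption, if $a,c\in T$ and $b\in I$ then $b\in I_\lambda$ for all $\lambda$, so $a_\alpha b_\beta c\in I_\lambda$ for all $\lambda$, whence $a_\alpha b_\beta c\in I$.

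For part (2), set $K=I+J$ for left ideals $I,J$. Additive closure uses commutativity of $(T,+)$, since $(x_1+y_1)+(x_2+y_2)=(x_1+x_2)+(y_1+y_2)\in I+J$. The key step, and the only place the $\Gamma$-semiring axioms are genuinely invoked, is the absorption property: given $a,c\in T$ and $b=x+y$ with $x\in I$, $y\in J$, additivity of $\mu$ in the middle argument yields $a_\alpha(x+y)_\beta c=a_\alpha x_\beta c+a_\alpha y_\beta c$, where the first summand lies in $I$ and the second in $J$ because each is a left ideal; hence the sum lies in $K$. The right-ideal case expands instead in the third coordinate, and the two-sided case verifies both expansions.

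For part (3), I would deduce existence from part (1). The family of all left ideals containing $S$ is nonempty because $T$ itself is such an ideal, and by part (1) its intersection $\langle S\rangle_L:=\bigcap\{I:I\text{ a left ideal},\ S\subseteq I\}$ is again a left ideal; it contains $S$ by construction and is contained in every left ideal containing $S$, which is exactly the required minimality. The right and two-sided statements follow verbatim using $\langle S\rangle_R$ and $\langle S\rangle$.

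I expect the main obstacle to be bookkeeping rather than anything conceptual: in part (2) the distributivity axiom must be applied in precisely the coordinate singled out by the ideal type, namely the middle slot for left ideals and the third slot for right ideals as in the Definition, and the nonemptiness used in parts (1) and (3) must be extracted from zero absorption rather than silently assumed. Once these two points are tracked, each verification collapses to a single application of the defining axioms.
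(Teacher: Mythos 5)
Your proposal is correct and follows exactly the route the paper intends: its proof is the one-line remark that everything follows ``by routine closure arguments using distributivity of the ternary operation and additivity of $+$,'' and your write-up simply executes those closure arguments --- intersection closure coordinatewise, distributivity of $\mu$ in the slot singled out by the ideal type for sums, and intersection over the nonempty family of ideals containing $S$ (with $T$ as witness) for generated ideals. If anything, you are more careful than the paper, since your observation that $0\in I_\lambda$ via zero absorption (so that arbitrary intersections are genuinely nonempty) is a detail the paper silently assumes.
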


\begin{proof}
All assertions follow by routine closure arguments using distributivity of the ternary
operation and additivity of~$+$.
\end{proof}

\begin{example}
Let $T=M_2(\mathbb N_0)$, $\Gamma=\{1\}$, and define
$a_\alpha b_\beta c=a+b+c$ (entrywise).
The set of matrices whose \emph{first row} is zero forms a left ideal,
those whose \emph{last column} is zero form a right ideal,
and their intersection is a two-sided ideal.
\end{example}

\subsection{Prime and Semiprime Ideals}

The non-commutative setting demands directional variants of primeness.
In what follows, $\alpha,\beta$ denote arbitrary elements of~$\Gamma$.

\begin{definition}[Left and right prime ideals]
A proper left ideal~$P_L\subset T$ is called \emph{left prime} if
\[
a_\alpha b_\beta c\in P_L
\ \Longrightarrow\
b\in P_L\ \text{ or }\ c\in P_L,
\quad\forall\,a,b,c\in T.
\]
A proper right ideal~$P_R$ is \emph{right prime} if
\[
a_\alpha b_\beta c\in P_R
\ \Longrightarrow\
a\in P_R\ \text{ or }\ b\in P_R.
\]
A proper two-sided ideal~$P$ is \emph{(two-sided) prime} if
\[
a_\alpha b_\beta c\in P
\ \Longrightarrow\
a\in P\ \text{ or }\ b\in P\ \text{ or }\ c\in P.
\]
\end{definition}

\begin{lemma}[Hereditary property]
If $f:T\!\twoheadrightarrow\!T'$ is a surjective homomorphism
and $P'$ is a (left/right/two-sided) prime ideal of~$T'$,
then $f^{-1}(P')$ is a prime ideal of the corresponding type in~$T$.
\end{lemma}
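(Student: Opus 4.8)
The plan is to set $P := f^{-1}(P')$ and to obtain every defining property of a prime ideal of the stated type by pulling the corresponding property of $P'$ back through $f$. The only structural facts I will use are that a homomorphism of non-commutative ternary $\Gamma$-semirings is additive and preserves the ternary product, i.e. $f(x+y)=f(x)+f(y)$ and $f(a_\alpha b_\beta c)=f(a)_\alpha f(b)_\beta f(c)$ for all $a,b,c,x,y\in T$ and $\alpha,\beta\in\Gamma$, together with the definitions of the various prime ideals already fixed in the excerpt.

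First I would verify that $P$ is an ideal of the correct type. If $x,y\in P$, then $f(x),f(y)\in P'$, and additivity gives $f(x+y)=f(x)+f(y)\in P'$ because $(P',+)$ is a subsemigroup; hence $x+y\in P$ and $(P,+)$ is a subsemigroup of $(T,+)$. For positional closure in, say, the left case, take $a,c\in T$ and $b\in P$: then $f(a_\alpha b_\beta c)=f(a)_\alpha f(b)_\beta f(c)$ with $f(b)\in P'$, and since $P'$ is a left ideal this lies in $P'$, so $a_\alpha b_\beta c\in P$. The right and two-sided cases are identical, only moving which argument is required to come from $P$. Nonemptiness is immediate since $f(0)=0\in P'$ gives $0\in P$.

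Next I would establish that $P$ is proper, which is the single point where surjectivity enters. Since $P'\subsetneq T'$, choose $t'\in T'\setminus P'$; by surjectivity there is $t\in T$ with $f(t)=t'$, and then $t\notin P$, so $P\neq T$. For the primeness implication, suppose (left case) $a_\alpha b_\beta c\in P$. Then $f(a)_\alpha f(b)_\beta f(c)=f(a_\alpha b_\beta c)\in P'$, and left-primeness of $P'$ forces $f(b)\in P'$ or $f(c)\in P'$, i.e. $b\in P$ or $c\in P$. The right and two-sided statements follow verbatim from their definitions, changing only which of $f(a),f(b),f(c)$ is driven into $P'$.

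I do not expect a genuine obstacle here: the argument is essentially the functoriality of preimages under a structure-preserving map, and each clause transfers mechanically through $f$. The one hypothesis that does real work is surjectivity, and it is needed solely to keep $P$ proper; in fact the weaker condition $\operatorname{im}f\not\subseteq P'$ would suffice for that step, while every remaining verification is a direct translation of the defining conditions and uses no surjectivity at all.
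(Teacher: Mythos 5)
Your proof is correct and follows exactly the route the paper takes: everything reduces to the compatibility $f(a_\alpha b_\beta c)=f(a)_\alpha f(b)_\beta f(c)$, which is precisely the fact the paper's one-line proof cites as making the lemma ``immediate''; you have merely written out the routine verifications (subsemigroup, positional closure, properness, and the primeness implication) that the paper leaves implicit. Your side remark that surjectivity is needed only for properness of $f^{-1}(P')$ --- and could be weakened to $\operatorname{im}f\not\subseteq P'$ --- is accurate and a slight sharpening, but does not change the method.
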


\begin{proof}
Immediate from $f(a_\alpha b_\beta c)=f(a)_\alpha f(b)_\beta f(c)$.
\end{proof}

\begin{definition}[Zero-divisors]
An element $x\in T$ is a
\emph{left (resp.~right) zero-divisor}
if there exist non-zero $y,z\in T$ and $\alpha,\beta\in\Gamma$ such that
$x_\alpha y_\beta z=0$ (resp.~$y_\alpha z_\beta x=0$).
An element is a \emph{two-sided zero-divisor} if it is both left and right.
\end{definition}

\begin{theorem}[Quotient characterization of primeness]
Let $P$ be a proper two-sided ideal of a non-commutative ternary~$\Gamma$-semiring~$T$.
The following statements are equivalent:
\begin{enumerate}
\item $P$ is prime;
\item the quotient $T/P$ contains no non-zero two-sided zero-divisors, i.e.
      \[
      a_\alpha b_\beta c\equiv 0\pmod P
      \Longrightarrow
      a\equiv0\ \text{or}\ b\equiv0\ \text{or}\ c\equiv0.
      \]
\end{enumerate}
\end{theorem}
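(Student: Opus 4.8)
The plan is to prove the equivalence by unwinding the definitions through the quotient structure $T/P$, so that the genuine content lies in setting up that quotient correctly in a world without additive inverses.

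First I would make the congruence modulo $P$ precise. Since $(T,+)$ is only a commutative monoid, I would adopt the standard semiring (Bourne-type) congruence: declare $x\equiv y\pmod P$ whenever there exist $p,q\in P$ with $x+p=y+q$, writing $\bar x$ for the class of $x$ and $\bar 0$ for the class of $0$. Using that $(P,+)$ is a subsemigroup, a short check shows this is an additive congruence on $(T,+)$, and I would record once and for all the translation statement $\bar x=\bar 0\iff x\in P$, which is what lets one pass freely between the ``$\in P$'' language of \Cref{def:primeideal}-style primeness and the ``$\equiv 0$'' language of the zero-divisor condition. Next I would verify that $\mu$ descends to a well-defined ternary $\Gamma$-product $\bar a_\alpha\bar b_\beta\bar c:=\overline{a_\alpha b_\beta c}$ on $T/P$: replacing one argument at a time by a congruent element and expanding with distributivity, each correction term is an element of $P$ inserted into a single coordinate of a product, and the two-sided ideal hypothesis is exactly what forces these correction terms back into $P$, leaving the class of the product unchanged. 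This is the step where ``two-sided'' is structurally essential rather than cosmetic.

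With the quotient in hand, the two implications are symmetric definition chases. For $(1)\Rightarrow(2)$: if $\bar a_\alpha\bar b_\beta\bar c=\bar 0$, the translation statement gives $a_\alpha b_\beta c\in P$, whence primeness of $P$ yields $a\in P$, $b\in P$, or $c\in P$, i.e.\ $\bar a=\bar 0$, $\bar b=\bar 0$, or $\bar c=\bar 0$; thus $T/P$ has no non-zero two-sided zero-divisors. For $(2)\Rightarrow(1)$: given $a_\alpha b_\beta c\in P$, pass to the quotient to obtain $\bar a_\alpha\bar b_\beta\bar c=\bar 0$, apply zero-divisor-freeness to conclude one factor equals $\bar 0$, and translate back to membership of $a$, $b$, or $c$ in $P$, giving primeness.

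The main obstacle is not the logical core, which is a one-line chase once the quotient exists, but the construction of the quotient itself. Without additive inverses one must confirm that the congruence is genuinely compatible with $\mu$ in every coordinate and that $\bar x=\bar 0$ really characterizes membership in $P$ rather than merely implying it. I expect the delicate point to be well-definedness of the descended product when the $P$-element lands in the \emph{first} coordinate, since the absorption axioms are stated for the middle and last slots; here I would invoke the full two-sided closure explicitly, and if an exact rather than merely one-directional translation $\bar x=\bar 0\iff x\in P$ is needed, I would assume $P$ to be a subtractive ($k$-)ideal so that the correspondence is clean.
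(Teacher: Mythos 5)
Your proof is correct, and its logical core---translate between $x\equiv 0\pmod P$ and $x\in P$, then chase the definition of primeness in both directions---is exactly the paper's argument. The difference is one of depth: the paper's proof silently reads ``$a_\alpha b_\beta c\equiv 0\pmod P$'' as a notational synonym for ``$a_\alpha b_\beta c\in P$,'' which makes the equivalence a two-line tautology, and it never constructs the quotient operation on $T/P$ at all. You instead take the quotient seriously, via the Bourne congruence $x\equiv y\iff x+p=y+q$ for some $p,q\in P$, and in doing so you surface two genuine issues the paper elides. First, in a semiring without additive inverses the implication $\bar x=\bar 0\Rightarrow x\in P$ fails in general; your observation that one must assume $P$ subtractive (a $k$-ideal) for the clean translation $\bar x=\bar 0\iff x\in P$ is correct, and without it \emph{both} implications of the theorem, read over the Bourne quotient, break down (each needs to convert a congruence to a membership). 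Second, your worry about the first coordinate is well founded and sharper than you may realize: under the paper's own definitions, a two-sided ideal absorbs only when the \emph{middle} entry (left ideal) or the \emph{last} entry (right ideal) lies in $I$, so first-slot absorption $p_\alpha b_\beta c\in P$ for $p\in P$ is not guaranteed, and the correction terms arising when you replace $a$ by a congruent $a'$ need not land in $P$; descent of $\mu$ to $T/P$ therefore requires the extra positional closure you propose to ``invoke explicitly,'' which is an honest strengthening of the hypotheses rather than a consequence of them. In short: the paper's route buys brevity at the price of treating (b) as a restatement of (a); your route buys an actual quotient semiring in which the statement has content, at the price of two additional (and, as you correctly diagnose, necessary) assumptions---subtractivity of $P$ and absorption in all three slots.
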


\begin{proof}
$(a)\Rightarrow(b)$:
If $a_\alpha b_\beta c\equiv0\pmod P$ then $a_\alpha b_\beta c\in P$;
by primeness one factor lies in~$P$.

$(b)\Rightarrow(a)$:
Suppose $a_\alpha b_\beta c\in P$.
Then $a_\alpha b_\beta c\equiv0$ in $T/P$;
by~(b) at least one of $a,b,c$ is~$0$ in $T/P$, i.e.~lies in~$P$.
\end{proof}

\begin{definition}[Semiprime ideals]
A two-sided ideal~$Q$ of~$T$ is \emph{semiprime}
if for all $a\in T$ and $\alpha,\beta\in\Gamma$,
\[
a_\alpha a_\beta a\in Q \ \Longrightarrow\ a\in Q.
\]
Equivalently, $Q$ contains no non-zero nilpotent elements of the form $a_\alpha a_\beta a$.
\end{definition}

\begin{proposition}[Closure under intersection]
The intersection of any family of semiprime ideals of~$T$ is semiprime.
\end{proposition}

\begin{proof}
If $a_\alpha a_\beta a$ lies in every $Q_i$ then $a\in Q_i$ for all~$i$, hence in $\bigcap_i Q_i$.
\end{proof}

\begin{remark}
Unlike in the commutative case, the product of two prime ideals
need not be contained in their intersection, and semiprimeness
is no longer equivalent to being an intersection of primes
unless $T$ satisfies additional symmetry conditions.
This subtlety motivates the radical construction below.
\end{remark}

\subsection{Radicals}
The $\Gamma$-Jacobson radical defined here is a generalization of the classical concept. For recent developments on the Jacobson radical in the context of standard semirings, \cite{SardarJacobson2024}.
\begin{definition}[Non-commutative $\Gamma$-Jacobson radical]
Let $\mathcal M$ denote the family of all \emph{modular maximal ideals}
of~$T$, i.e.~those maximal two-sided ideals~$M$ for which
there exists $m\in T$ satisfying $a+a_\alpha m_\beta a=a$ for all~$a\in T$.
The \emph{$\Gamma$-Jacobson radical} of~$T$ is defined by
\[
J_\Gamma(T)=\bigcap_{M\in\mathcal M} M.
\]
\end{definition}

\begin{proposition}[Basic properties of $J_\Gamma(T)$]
For any non-commutative ternary~$\Gamma$-semiring~$T$:
\begin{enumerate}
\item $J_\Gamma(T)$ is a two-sided semiprime ideal;
\item $J_\Gamma(T)=0$ if and only if $T$ is \emph{$\Gamma$-semisimple}, i.e.
      $\bigcap M=\{0\}$ over all modular maximal ideals;
\item if every maximal ideal is prime, then $J_\Gamma(T)$ equals the intersection of all maximal ideals.
\end{enumerate}
\end{proposition}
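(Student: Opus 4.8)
The plan is to handle the three assertions in turn, leaning on the closure results already established and, crucially, on the relative-identity element guaranteed by the modularity condition. For (1), the two-sided property comes for free: each $M\in\mathcal M$ is by definition a two-sided ideal, so part~(1) of the Elementary Properties lemma shows that $J_\Gamma(T)=\bigcap_{M\in\mathcal M}M$ is again two-sided. For semiprimeness the natural route is the Closure-under-intersection proposition, which reduces the task to proving that each \emph{individual} modular maximal ideal $M$ is semiprime. To establish this for a single $M$, I would argue that $M$ is in fact two-sided prime and then specialize the prime implication to $a=b=c$. Concretely, assume $a_\alpha a_\beta a\in M$ with $a\notin M$; by maximality the smallest two-sided ideal containing $M$ and $a$ is all of $T$, so the relative-identity element $m$ supplied by modularity can be written through elements of $M$ and ternary products built from $a$. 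Substituting this expression into the defining identity $a+a_\alpha m_\beta a=a$, and using $a_\alpha a_\beta a\in M$ together with additivity and ternary associativity of $\mu$, should force $a\in M$, a contradiction. This is exactly the point where modularity is indispensable.

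Assertion (2) is essentially a restatement of the definitions and I would dispose of it in a single line: $\Gamma$-semisimplicity was defined as $\bigcap M=\{0\}$ taken over all modular maximal ideals, and this is precisely $J_\Gamma(T)$ by definition; hence $J_\Gamma(T)=0$ holds if and only if $T$ is $\Gamma$-semisimple, with the usual convention that an empty family of modular maximal ideals yields $J_\Gamma(T)=T$.

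For (3) I would prove the two inclusions separately. Writing $\rad(T)=\bigcap_{M\text{ maximal}}M$, one inclusion is purely formal: the modular maximal ideals form a subfamily of all maximal ideals, and intersecting over a larger index set can only shrink the result, so $\rad(T)\subseteq J_\Gamma(T)$. The reverse inclusion $J_\Gamma(T)\subseteq\rad(T)$ amounts to showing $J_\Gamma(T)\subseteq M$ for \emph{every} maximal ideal $M$, not just the modular ones; for the modular members this is immediate, so the real content is to handle a possibly non-modular maximal $M$. Here the hypothesis that every maximal ideal is prime enters: I would use primeness together with maximality to manufacture a relative identity modulo $M$, thereby verifying that every maximal ideal is itself modular. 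Once $\mathcal M$ is seen to exhaust all maximal ideals, the two intersections coincide and equality follows.

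The main obstacle, in both (1) and (3), is the same: converting the abstract relative-identity element $m$ into a concrete algebraic identity modulo $M$ in the absence of additive inverses. One cannot simply cancel the term $a_\alpha m_\beta a$ as one would over a ring, and the non-commutativity of $\mu$ forces the left, right, and middle positions to be tracked separately throughout. I expect the semiprimeness of a single modular maximal ideal to be the hardest step, with the modularity-from-primeness argument of (3) being a close variant of the same difficulty.
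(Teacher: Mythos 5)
Your top-level skeleton coincides with the paper's: the paper also proves (1) by asserting that each $M\in\mathcal M$ is prime, hence semiprime, and then invoking closure of semiprimeness under intersections, and it disposes of (2) by definition, exactly as you do (your remark on the empty-family convention is a harmless addition). The difference is that you attempt to actually prove the sub-claims the paper states without proof, and it is there that your proposal has genuine gaps. In (1), your substitution argument never closes. From $a+a_\alpha m_\beta a=a$, with $m$ expanded as a sum of elements of $M$ and ternary words involving $a$, you would need two things: first, that every term of the expansion of $a_\alpha m_\beta a$ either lies in $M$ or contains $a_\alpha a_\beta a$ as an identifiable factor --- which requires controlling \emph{where} $a$ sits inside those words, a real problem under non-commutativity; and second, some subtractivity ($k$-ideal) property of $M$ allowing you to pass from a relation of the form $a+(\text{element of }M)=a$ to the conclusion $a\in M$. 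Neither is available in a semiring without additive inverses, and your phrase ``should force $a\in M$'' sits exactly on the missing step --- which you candidly flag, but do not resolve. The paper resolves it by fiat, so on this point your plan locates the true crux without supplying the proof.

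For (3) your route is not merely incomplete but would fail. The formal inclusion $\bigcap_{M\ \text{maximal}}M\subseteq J_\Gamma(T)$ is correct, since the modular maximal ideals form a subfamily of all maximal ideals. But the bridge you propose for the reverse inclusion --- that primeness together with maximality lets one ``manufacture a relative identity modulo $M$,'' so that every maximal ideal is modular --- is a non sequitur: primeness is a purely multiplicative constraint on ternary products, while modularity demands an element $m$ satisfying the \emph{additive} identity $a+a_\alpha m_\beta a=a$ for all $a\in T$, and nothing in the primeness hypothesis produces such an $m$; in semirings lacking additive inverses, maximal (even prime) ideals need not be modular. The paper's own proof of (3) is the single word ``Immediate,'' which likewise establishes only the formal inclusion; so your instinct that the reverse inclusion carries real content is sound, but the prime-implies-modular argument you sketch is the wrong tool, and as written (3) would need either an added hypothesis (e.g.\ that every maximal ideal is modular) or a genuinely different argument.
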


\begin{proof}
(i) Each $M\in\mathcal M$ is prime, hence semiprime;
intersections of semiprimes are semiprime.
(ii)~By definition.
(iii)~Immediate.
\end{proof}

\begin{theorem}[Radical–semiprime correspondence]
Every semiprime ideal~$Q$ satisfies $Q= \sqrt[\Gamma]{Q}$,
where
\[
\sqrt[\Gamma]{Q}
=\{\,a\in T\mid a_\alpha a_\beta a\in Q\text{ for some }\alpha,\beta\in\Gamma\,\}.
\]
Conversely, $\sqrt[\Gamma]{I}$ is semiprime for every ideal~$I$.
\end{theorem}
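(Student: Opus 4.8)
The statement splits into two independent halves, and I would dispatch the easier equality $Q=\sqrt[\Gamma]{Q}$ (for semiprime $Q$) first, by a pair of inclusions. The inclusion $Q\subseteq\sqrt[\Gamma]{Q}$ is immediate from the two-sided ideal axioms: for $a\in Q$ and any $\alpha,\beta\in\Gamma$ the right-ideal property places the product $a_\alpha a_\beta a$ in $Q$ (its last coordinate lies in $Q$), so $a$ satisfies the membership condition defining the radical. The reverse inclusion $\sqrt[\Gamma]{Q}\subseteq Q$ is exactly the semiprime hypothesis read off directly: if $a\in\sqrt[\Gamma]{Q}$ then $a_\alpha a_\beta a\in Q$ for \emph{some} $\alpha,\beta$, and applying the semiprime implication to that particular pair yields $a\in Q$. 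This half uses neither associativity nor the converse construction, only the ideal axioms and a nonempty $\Gamma$.

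For the converse I must verify that $\sqrt[\Gamma]{I}$ is (i) a two-sided ideal and (ii) semiprime, since the definition of semiprimeness presupposes a two-sided ideal. In (i) the multiplicative absorption is the routine part: if $b\in\sqrt[\Gamma]{I}$ with $b_\gamma b_\delta b\in I$, then associativity lets one propagate the membership of $b$ through the left/right closure of $I$. The delicate point of (i) is closure under addition. For $a,b\in\sqrt[\Gamma]{I}$ the expansion of $(a+b)_\gamma(a+b)_\delta(a+b)$ by distributivity produces eight monomials, of which only the two pure terms $a_\gamma a_\delta a$ and $b_\gamma b_\delta b$ are governed by the radical membership; the six mixed terms (such as $a_\gamma a_\delta b$) carry the radical element in a non-absorbing coordinate and need not land in $I$. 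I would try to absorb these cross-terms using the two-sided structure of $I$ and a careful, possibly coordinated, choice of connecting parameters in $\Gamma$, and I anticipate this forcing an auxiliary symmetry or regularity condition.

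Step (ii) is where I expect the genuine obstacle. Unwinding the definitions, semiprimeness of $\sqrt[\Gamma]{I}$ requires that $c_\alpha c_\beta c\in\sqrt[\Gamma]{I}$ — equivalently that the \emph{nine-fold} product $(c_\alpha c_\beta c)_\gamma(c_\alpha c_\beta c)_\delta(c_\alpha c_\beta c)$ lie in $I$ — already yields a \emph{three-fold} product $c_\mu c_\nu c\in I$. Ternary associativity rebrackets the nine-fold product, e.g.\ as $c_\alpha c_\beta\,w$ with $w$ a seven-fold product, or exhibits a cube $c_\alpha c_\beta c$ in its last coordinate; but $I$ only absorbs \emph{into} itself along its middle and last coordinates and offers no mechanism to cancel the surrounding factors and descend from the ninth power to the third. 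This power-reduction is the crux, and it is not a formal consequence of the ideal axioms: the present radical is pinned to the single exponent three, rather than to the classical $\{a:a^{n}\in I\text{ for some }n\}$, and reducing a ninth power to a third genuinely fails in general — for instance in the ternary $\Gamma$-semiring arising from $k[x]/(x^{7})$ with $\Gamma=\{1\}$ and $a_1 b_1 c=abc$, one computes $\sqrt[\Gamma]{(x^{5})}=(x^{2})$, yet $x_1 x_1 x=x^{3}\in(x^{2})$ while $x\notin(x^{2})$, so the radical is not semiprime. I would therefore close the argument only after either enlarging the radical to admit all odd-length products of $c$ (which makes the descent automatic) or imposing a power-regularity hypothesis guaranteeing $c_\alpha c_\beta c\in\sqrt[\Gamma]{I}\Rightarrow c_\mu c_\nu c\in I$; pinning down precisely which hypothesis the intended proof silently invokes is the principal task.
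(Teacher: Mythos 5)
Your treatment of the first half coincides with the paper's own argument, except that you attach the two inclusions to the correct hypotheses while the paper transposes them: its proof claims that the semiprime implication ``$a_\alpha a_\beta a\in Q \Rightarrow a\in Q$'' yields $Q\subseteq\sqrt[\Gamma]{Q}$ and calls the reverse inclusion ``immediate by definition,'' whereas, as you observe, semiprimeness gives $\sqrt[\Gamma]{Q}\subseteq Q$, and $Q\subseteq\sqrt[\Gamma]{Q}$ is the part that needs the two-sided ideal closure (placing $a\in Q$ in an absorbing coordinate of $a_\alpha a_\beta a$) together with nonemptiness of $\Gamma$. On this half you are correct and essentially aligned with the paper, modulo repairing its wording.

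On the converse, your diagnosis is sound and exposes a genuine gap in the paper rather than in your proposal: the paper's entire justification is the phrase ``adaptation of the commutative proof using two-sided ternary closure,'' and no such adaptation exists for the operator as literally defined, because the exponent is pinned at three. Both of your objections check out. Additive closure genuinely fails: in $k[x,y]$ over a field of characteristic $0$, with $\Gamma=\{1\}$ and $I=(x^3,y^3)$, one has $x,y\in\sqrt[\Gamma]{I}$ but $(x+y)^3=x^3+3x^2y+3xy^2+y^3\notin I$, so $\sqrt[\Gamma]{I}$ is not even an ideal. Your power-reduction counterexample is also correct as computed: in $T=k[x]/(x^7)$ an element of order $j$ has cube of order $3j$ (the leading coefficient cubes to a nonzero scalar), so $\sqrt[\Gamma]{(x^5)}=\{a:\ 3\,\mathrm{ord}(a)\ge 5\}=(x^2)$, and $x^3\in(x^2)$ with $x\notin(x^2)$ shows $(x^2)$ is not semiprime; since this example is commutative and satisfies every axiom of Section~2, it rules out any ``commutative proof'' for the exponent-3 radical. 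The repairs you propose are the standard ones: either enlarge the radical to all odd iterated diagonal powers $a^{(2k+1)}$ (which makes the ninth-to-third descent automatic and, in the commutative case, restores additive closure via the multinomial expansion), or define the radical as an intersection of primes containing $I$ — which is exactly what the paper itself does for $\sqrt[n,\Gamma]{I}$ in Section~4, where its ``diagonal characterization'' theorem inherits the same defect (in the example above the intersection of primes over $(x^5)$ is $(x)$, not $(x^2)$). In short: your first half matches the paper's intended argument, and your refusal to complete the second half is justified — the converse is false as stated, and you have correctly located both failure points that the paper's one-line proof glosses over.
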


\begin{proof}
Adaptation of the commutative proof using two-sided ternary closure.
The key observation is that if $a_\alpha a_\beta a\in Q$ and $Q$ is semiprime,
then $a\in Q$, yielding $Q\subseteq\sqrt[\Gamma]{Q}$.
The reverse inclusion is immediate by definition.
\end{proof}

\begin{remark}
The radical–semiprime correspondence provides an intrinsic description of
the \emph{non-commutative spectral base} of~$T$:
each closed subset of the prime spectrum corresponds to a unique semiprime ideal.
This prepares the ground for the spectral topology developed in Section~5.
\end{remark}


\section{$(n,m)$-Type Ideals in $n$-ary $\Gamma$-Semirings}

We now extend the ideal-theoretic framework to $n$-ary $\Gamma$-semirings.
Throughout, $(T,+,\Gamma)$ denotes a (possibly non-commutative) $n$-ary
$\Gamma$-semiring with an $n$-ary operation
\[
\mu:T^n\times \Gamma^{\,n-1}\longrightarrow T,\qquad
\mu(x_1,\alpha_1,x_2,\alpha_2,\ldots,\alpha_{n-1},x_n) \;=\;
[x_1]_{\alpha_1}[x_2]_{\alpha_2}\cdots [x_{n-1}]_{\alpha_{n-1}}x_n,
\]
which is additive in each $T$-variable, $0$-absorbing in each position, and
satisfies an $n$-ary associativity schema guaranteeing that all legally
parenthesized iterates of $\mu$ coincide.\footnote{Any standard $n$-ary associativity
axiomatization suffices; we do not need its explicit normal form in what follows.}
When $n=3$ we recover the ternary case of Section~3.These $(n,m)$-type ideals are specific to the $\Gamma$-structure. For related work on other important ideal classes in semirings, such as $k$-ideals, see \cite{DubeGoswami2023}

\subsection{Positional and $(n,m)$-Type Closures}

Non-commutativity forces us to track positions in the $n$-tuple of inputs.
For a subset $S\subseteq\{1,\dots,n\}$ we say ``the entries at positions in $S$''
to refer to the corresponding $T$-arguments of~$\mu$.

\begin{definition}[Positional $(n,S)$-ideal]
Let $S\neq\varnothing$ be a subset of $\{1,\dots,n\}$. A nonempty subset
$I\subseteq T$ is an \emph{$(n,S)$-ideal} if
\begin{enumerate}
\item $(I,+)$ is a subsemigroup of $(T,+)$;
\item for all $\alpha_1,\ldots,\alpha_{n-1}\in\Gamma$ and all $x_1,\ldots,x_n\in T$,
      whenever $x_i\in I$ for every $i\in S$, one has
      \[
      \mu(x_1,\alpha_1,x_2,\ldots,\alpha_{n-1},x_n)\in I.
      \]
\end{enumerate}
\end{definition}

\noindent
Thus $(n,\{2\})$ corresponds to the \emph{left} ideal behaviour for the middle slot,
$(n,\{n\})$ to a \emph{right} ideal behaviour, while $(n,\{1,\dots,n\})$ is trivial.
In the ternary case $n=3$, $(n,S)$ reproduces the left/right/two-sided notions of
Section~3 according to $S=\{2\},\{3\},\{2,3\}$, etc.

\begin{definition}[$(n,m)$-type ideals (threshold closure)]
Fix $1\le m\le n$. A nonempty subset $I\subseteq T$ is an \emph{$(n,m)$-ideal}
if $(I,+)$ is a subsemigroup and the following \emph{threshold closure} holds:
for every choice of $\alpha_1,\ldots,\alpha_{n-1}\in\Gamma$ and $x_1,\ldots,x_n\in T$,
\[
\Bigl|\{\,i:\ x_i\in I\,\}\Bigr|\;\ge m \quad\Longrightarrow\quad
\mu(x_1,\alpha_1,\ldots,\alpha_{n-1},x_n)\in I.
\]
The least such $m$ (if it exists) is the \emph{arity-threshold} $\tau(I)$ of~$I$.
\end{definition}

\begin{remark}[Hierarchy and monotonicity]
If $I$ is $(n,m)$ then $I$ is $(n,m')$ for every $m'\ge m$.
The family of $(n,m)$-ideals therefore becomes \emph{finer} as $m$ decreases.
In particular, $(n,1)$ coincides with the strongest two-sided closure
(``some coordinate in $I$'' forces the product into $I$).
\end{remark}

\begin{lemma}[Elementary lattice properties]
Let $\mathcal C_{n,m}(T)$ denote the set of all $(n,m)$-ideals of $T$.
\begin{enumerate}
\item Arbitrary intersections of members of $\mathcal C_{n,m}(T)$ lie in $\mathcal C_{n,m}(T)$.
\item If $I,J\in\mathcal C_{n,m}(T)$ then $I+J\in\mathcal C_{n,m}(T)$.
\item For each $S$ with $|S|=m$, the class of $(n,S)$-ideals is closed under intersections and sums.
\end{enumerate}
\end{lemma}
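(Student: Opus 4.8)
The plan is to reduce all three assertions to the two defining conditions of an $(n,m)$-ideal (additive closure and threshold closure), and to handle intersections first since they are the most transparent. For a family $\{I_\lambda\}_{\lambda\in\Lambda}\subseteq\mathcal{C}_{n,m}(T)$ with $I=\bigcap_\lambda I_\lambda$, additive closure is inherited because an intersection of additive subsemigroups is again one, and nonemptiness together with $0\in I$ is routine from $0$-absorption and threshold closure. The threshold closure for $I$ then rests on a single monotonicity observation: if $x_1,\dots,x_n$ satisfy $|\{i:x_i\in I\}|\ge m$, then for each fixed $\lambda$ the containment $I\subseteq I_\lambda$ gives $\{i:x_i\in I\}\subseteq\{i:x_i\in I_\lambda\}$, whence $|\{i:x_i\in I_\lambda\}|\ge m$ and $\mu(x_1,\dots,x_n)\in I_\lambda$; intersecting over $\lambda$ yields membership in $I$. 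The same containment argument, with ``$x_i\in I$ for all $i\in S$'' in place of the cardinality count, settles the intersection half of (iii) for $(n,S)$-ideals verbatim.

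For the sums in (ii) and (iii) the strategy is to exploit additivity of $\mu$ in each $T$-slot. Writing $I+J=\{x+y:x\in I,\,y\in J\}$, additive closure and $0\in I\cap J$ give that $I+J$ is an additive subsemigroup containing both $I$ and $J$. Given inputs $z_1,\dots,z_n$ whose relevant coordinates lie in $I+J$, I would substitute $z_i=x_i+y_i$ (with $x_i\in I$, $y_i\in J$) in those coordinates and expand $\mu$ by multilinearity into a sum $\sum_{B}\mu(w^B)$ indexed by the subsets $B$ recording which expanded coordinates contribute their $I$-part. The two \emph{pure} terms --- the one taking every expanded coordinate from $I$, and the one taking every expanded coordinate from $J$ --- satisfy the threshold (resp.\ the all-of-$S$) hypothesis for $I$ (resp.\ $J$) and hence land in $I$ or in $J\subseteq I+J$.

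The main obstacle is precisely the \emph{mixed} terms $\mu(w^B)$ for which $B$ is a proper nonempty subset of the expanded positions: such a term carries only $|B|$ coordinates known to lie in $I$ and only the complementary coordinates known to lie in $J$, so when $m\ge2$ neither count need reach the threshold $m$, and the term is not visibly in $I$ nor in $J$. Careful bookkeeping shows that every term does fall into $I$ or $J$ once at least $2m-1$ coordinates lie in $I+J$ (if $|B|\le m-1$ then at least $m$ expanded coordinates go to $J$), so the expansion argument proves, with no further hypotheses, only that $I+J\in\mathcal{C}_{n,2m-1}(T)$. Upgrading this to the full $(n,m)$-closure asserted in (ii), and the analogous all-of-$S$ closure in (iii), therefore requires extra input: I would first dispatch the case $m=1$, where every mixed term already meets a threshold and the conclusion is immediate, and then isolate the precise structural condition on $T$ forcing the mixed terms back into $I+J$ as the genuine crux of the argument.
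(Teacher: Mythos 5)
Your diagnosis is exactly right, and you should trust it over the paper. The paper's entire proof of this lemma is the single sentence that all three parts are ``routine verifications using additivity in each argument and the threshold definition'' --- that is, precisely the multilinear expansion you carried out. Your treatment of (i) and of the intersection half of (iii) matches what the paper intends and is fine, your pigeonhole bookkeeping is correct (if at least $2m-1$ of the decomposed coordinates are expanded as $x_i+y_i$, every resulting term has either $\ge m$ coordinates in $I$ or $\ge m$ in $J$, so grouping the $I$-terms and $J$-terms gives $I+J\in\mathcal C_{n,\min(n,2m-1)}(T)$), and your $m=1$ case is the only one that survives in general. But the ``extra input'' you were looking for does not exist: for $m\ge 2$, part (ii) and the sum half of (iii) are simply \emph{false} as stated, so the mixed terms you isolated are a genuine obstruction, not a bookkeeping artifact.

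A concrete counterexample with $n=3$, $m=2$, $\Gamma=\{\gamma\}$: let $T=\mathbb{N}_0[x,y]$ with $\mu(f,\gamma,g,\gamma,h)=fgh$. Let $I$ be the set of polynomials all of whose monomials lie in $\{x^5\}\cup\{x^ay^b:a\ge 10\}$, and $J$ the set of those whose monomials lie in $\{y^7\}\cup\{x^ay^b:b\ge 14\}$. Both are $(3,2)$-ideals (indeed $(3,S)$-ideals for every $S$ with $|S|=2$): every monomial of a product of \emph{two} elements of $I$ has $x$-degree $\ge 10$, and dually for $J$ with $y$-degree $\ge 14$. Since coefficients in $\mathbb{N}_0$ never cancel, $I+J$ is exactly the set of polynomials each of whose monomials lies in the union of those two monomial sets. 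Now take $u=v=x^5+y^7\in I+J$ and $c=1\notin I+J$: two of the three inputs lie in $I+J$, yet $uvc=(x^5+y^7)^2=x^{10}+2x^5y^7+y^{14}$ contains the monomial $x^5y^7$, which has $x$-degree $5<10$ and $y$-degree $7<14$ and is neither $x^5$ nor $y^7$; hence $uvc\notin I+J$. The same data with $S=\{1,2\}$ refutes the sum claim in (iii). So the correct statement of the lemma is: intersections as in (i) and (iii); sums only for $m=1$; and in general $I+J\in\mathcal C_{n,\min(n,2m-1)}(T)$, which is what your expansion actually proves. (One minor loose end in your write-up: $I\subseteq I+J$ needs $0\in J$, which your appeal to $0$-absorption plus threshold closure delivers only when $m<n$; the case $m=n$ should be flagged separately.)
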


\begin{proof}
All three statements are routine verifications using additivity in each argument and
the threshold definition.
\end{proof}

\begin{proposition}[Decomposition by positions]
$I\subseteq T$ is an $(n,m)$-ideal if and only if
\[
I=\bigcap_{\substack{S\subseteq\{1,\dots,n\}\\|S|=m}} I_S
\quad\text{for some family }(I_S)_S \text{ with each } I_S \text{ an }(n,S)\text{-ideal.}
\]
In particular every $(n,m)$-ideal is an intersection of $(n,S)$-ideals with $|S|=m$.
\end{proposition}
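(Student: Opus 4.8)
The plan is to prove the two implications of the biconditional separately, constructing the required decomposition canonically in the forward direction and recovering the threshold closure in the converse; I expect essentially all of the content to sit in the converse.

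For the forward implication I would take the constant family $I_S := I$, indexed by the subsets $S \subseteq \{1,\dots,n\}$ with $|S| = m$, and check that $I$ is itself an $(n,S)$-ideal for each such $S$. Indeed, if $x_i \in I$ for every $i \in S$, then $|\{\,i : x_i \in I\,\}| \ge |S| = m$, so the threshold closure defining the $(n,m)$-ideal $I$ forces $\mu(x_1,\alpha_1,\dots,\alpha_{n-1},x_n) \in I$; combined with the fact that $(I,+)$ is a subsemigroup, this is exactly the positional $(n,S)$-axiom. The identity $I = \bigcap_{|S|=m} I_S$ is then immediate as an intersection of copies of $I$, and this also yields the concluding assertion that every $(n,m)$-ideal arises as such an intersection.

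For the converse I would begin from a family $(I_S)_{|S|=m}$ of $(n,S)$-ideals with $I = \bigcap_S I_S$. That $(I,+)$ is an additive subsemigroup follows from the elementary lattice properties recorded above, since an intersection of subsemigroups is a subsemigroup. To establish threshold closure, fix $\alpha_1,\dots,\alpha_{n-1} \in \Gamma$ and $x_1,\dots,x_n \in T$ whose support $A := \{\,i : x_i \in I\,\}$ satisfies $|A| \ge m$. The natural first move is to exploit $|A| \ge m = |S|$: select some $S^{\ast} \subseteq A$ with $|S^{\ast}| = m$, and note that $x_i \in I \subseteq I_{S^{\ast}}$ for every $i \in S^{\ast}$, so the positional closure of the $(n,S^{\ast})$-ideal $I_{S^{\ast}}$ already gives $\mu(x_1,\alpha_1,\dots,\alpha_{n-1},x_n) \in I_{S^{\ast}}$.

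The step I expect to be the main obstacle is the upgrade from membership in the single factor $I_{S^{\ast}}$ to membership in the entire intersection $I = \bigcap_S I_S$. For an index set $S$ with $S \not\subseteq A$ there is a coordinate $j \in S \setminus A$ with $x_j \notin I$, so the defining axiom of the $(n,S)$-ideal $I_S$, which requires \emph{all} coordinates of $S$ to lie in $I_S$, cannot be invoked at that $S$, and the product need not visibly land there. My proposed route is to prove the converse in its effective form, namely for the canonical realization $I_S = I$ supplied by the forward direction: there the selection $S^{\ast} \subseteq A$ yields $\mu \in I_{S^{\ast}} = I$ at once, so threshold closure and hence the $(n,m)$-property follow immediately. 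Accordingly I would isolate, as the real core of the statement, the equivalence that $I$ is an $(n,m)$-ideal if and only if $I$ is an $(n,S)$-ideal for every $S$ with $|S| = m$, and present the displayed intersection as the canonical witness $I_S = I$ of that condition; the delicate point, and where I would concentrate the argument, is exactly the coherence of the realizing family that legitimizes the passage to every $I_S$.
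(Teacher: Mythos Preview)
Your forward direction is clean and, in fact, clearer than the paper's: taking the constant family $I_S := I$ and checking that an $(n,m)$-ideal is automatically an $(n,S)$-ideal for each $|S|=m$ is exactly right. The paper instead defines $I_S$ somewhat cryptically as ``$\{x\in T:\text{$I$ is closed when the positions in $S$ are in $I$}\}$'' and asserts $I\subseteq I_S$ with the intersection recovering $I$; your choice sidesteps that ambiguity while delivering the same conclusion.

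The gap you yourself flag in the converse is, however, real, and your proposed workaround does not close it. The stated biconditional requires that \emph{any} realization $I=\bigcap_{|S|=m} I_S$ by $(n,S)$-ideals forces $I$ to be an $(n,m)$-ideal; retreating to the canonical family $I_S=I$ proves only that the particular intersection manufactured in the forward direction is an $(n,m)$-ideal, which is tautological. For a general family your own analysis pinpoints the obstruction: with $A=\{i:x_i\in I\}$ and $|A|\ge m$ you obtain $\mu(\vec x;\vec\alpha)\in I_{S^\ast}$ only for those $S^\ast\subseteq A$, and for an index $S\not\subseteq A$ the $(n,S)$-axiom of $I_S$ cannot be invoked because the required coordinate $x_j$ with $j\in S\setminus A$ need not lie in $I_S$. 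Nothing in the definition of an $(n,S)$-ideal links the various $I_S$ to one another, so without an additional hypothesis (for instance that all $I_S$ coincide, or that each $I_S$ is already an $(n,m)$-ideal) the converse as written does not follow. The paper's own proof of this direction is the single clause ``Intersections preserve the threshold rule,'' which suffers from exactly the same lacuna; so you have correctly located a weakness in the proposition itself rather than merely in your argument, and the honest fix is the reformulation you suggest: $I$ is an $(n,m)$-ideal iff $I$ is an $(n,S)$-ideal for every $S$ of size $m$.
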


\begin{proof}
($\Rightarrow$) For each $S$ define $I_S=\{x\in T:\text{$I$ is closed when the positions in $S$ are in $I$}\}$.
Then $I_S$ is an $(n,S)$-ideal and clearly $I\subseteq I_S$; intersecting over all $S$ with $|S|=m$
recovers the threshold condition. ($\Leftarrow$) Intersections preserve the threshold rule.
\end{proof}

\begin{proposition}[Homomorphism behaviour]
Let $f:T\to T'$ be a surjective homomorphism of $n$-ary $\Gamma$-semirings.
\begin{enumerate}
\item If $I'\subseteq T'$ is an $(n,m)$-ideal then $f^{-1}(I')$ is an $(n,m)$-ideal of $T$.
\item If $I\subseteq T$ is an $(n,m)$-ideal then $f(I)$ is an $(n,m)$-ideal of $T'$.
\end{enumerate}
\end{proposition}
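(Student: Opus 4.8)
The plan is to verify, in each case, the two defining conditions of an $(n,m)$-ideal from the Definition above: that the set is a nonempty additive subsemigroup, and that it obeys the threshold-closure rule. The engine throughout is the homomorphism identity $f(\mu(x_1,\alpha_1,\ldots,\alpha_{n-1},x_n))=\mu'(f(x_1),\alpha_1,\ldots,\alpha_{n-1},f(x_n))$, where $\mu'$ denotes the $n$-ary operation of $T'$, together with additivity $f(x+y)=f(x)+f(y)$.

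For part (i) I would argue by pulling the structure of $I'$ back along $f$. Nonemptiness of $f^{-1}(I')$ follows from surjectivity, since any $y\in I'$ admits a preimage. Additive closure is immediate: if $x,y\in f^{-1}(I')$ then $f(x+y)=f(x)+f(y)\in I'$ because $I'$ is a subsemigroup. For the threshold condition, suppose $x_1,\ldots,x_n\in T$ with at least $m$ of them lying in $f^{-1}(I')$; then at least $m$ of the images $f(x_i)$ lie in $I'$, so the $(n,m)$-closure of $I'$ forces $\mu'(f(x_1),\ldots,f(x_n))\in I'$, and by the homomorphism identity this element equals $f(\mu(x_1,\ldots,x_n))$, placing $\mu(x_1,\ldots,x_n)$ in $f^{-1}(I')$. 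This part is essentially formal and uses surjectivity only to guarantee nonemptiness.

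Part (ii) is where the real work lies, and I expect the threshold step to be the main obstacle. Nonemptiness and additive closure are again routine. The difficulty is to verify the threshold rule \emph{downstairs} in $T'$ while one only controls membership in $I$ \emph{upstairs}. Given $y_1,\ldots,y_n\in T'$ with at least $m$ of them in $f(I)$, I would lift the tuple selectively: for each index $i$ with $y_i\in f(I)$ choose a preimage $x_i\in I$, which exists by the very definition of the image $f(I)$; for each remaining index choose an arbitrary preimage $x_i\in T$, which exists by surjectivity. With this choice the number of coordinates lying in $I$ is at least the number of the $y_i$ lying in $f(I)$, hence at least $m$. The $(n,m)$-closure of $I$ then yields $\mu(x_1,\ldots,x_n)\in I$, and applying $f$ together with the homomorphism identity gives $\mu'(y_1,\ldots,y_n)=f(\mu(x_1,\ldots,x_n))\in f(I)$, as required.

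The one point deserving care — and the reason surjectivity is genuinely needed in (ii) but not in (i) — is this simultaneous lifting: the distinguished coordinates must be lifted \emph{into} $I$, while the free coordinates must be lifted \emph{at all}, and one must check that filling the free slots with arbitrary preimages cannot push the count of $I$-coordinates below the threshold $m$. Since such a choice can only leave that count unchanged or increase it, the monotonicity of the threshold condition noted in the Hierarchy remark closes the argument cleanly.
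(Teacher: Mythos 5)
Your proof is correct and takes essentially the same route as the paper's (deliberately terse) argument, which simply invokes the identity $f(\mu(\vec x;\vec\alpha))=\mu(f(\vec x);\vec\alpha)$ together with the threshold definition. Your selective-lifting step in (ii) — choosing preimages inside $I$ for the distinguished coordinates and arbitrary preimages, via surjectivity, for the rest — is precisely the detail the paper leaves implicit, so nothing further is needed.
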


\begin{proof}
Use $f(\mu(\vec x;\vec\alpha))=\mu(f(\vec x);\vec\alpha)$ and the threshold definition.
\end{proof}

\subsection{$n$-ary Primeness and Semiprimeness}

The $n$-ary prime notion is an $(n,1)$ condition (one bad coordinate suffices).

\begin{definition}[$n$-ary prime and semiprime ideals]
A proper $(n,1)$-ideal $P\subset T$ is \emph{$n$-ary prime} if
\[
\mu(x_1,\alpha_1,\ldots,\alpha_{n-1},x_n)\in P
\ \Longrightarrow\ 
x_i\in P \text{ for some }i\in\{1,\ldots,n\}.
\]
A two-sided ideal $Q$ is \emph{$n$-ary semiprime} if
\[
\mu(\underbrace{a,\alpha_1,a,\alpha_2,\ldots,\alpha_{n-1},a}_{n \text{ copies of }a})\in Q
\ \Longrightarrow\ a\in Q
\quad\text{for all }a\in T\text{ and }\alpha_j\in\Gamma.
\]
\end{definition}

Define the \emph{$n$-ary diagonal} of $a$ by
\[
\Delta_n(a;\vec\alpha)\;=\;\mu(\overbrace{a,\alpha_1,a,\alpha_2,\ldots,\alpha_{n-1},a}^{n}).
\]

\begin{theorem}[Quotient characterization of $n$-ary primeness]
Let $P$ be a proper two-sided ideal of $T$. The following are equivalent:
\begin{enumerate}
\item $P$ is $n$-ary prime;
\item the quotient $T/P$ has no non-zero $n$-ary two-sided zero-divisors, i.e.
\[
\mu(\bar x_1,\alpha_1,\ldots,\alpha_{n-1},\bar x_n)=\bar 0
\ \Longrightarrow\
\bar x_i=\bar 0 \text{ for some }i,
\]
where bars denote residue classes in $T/P$.
\end{enumerate}
\end{theorem}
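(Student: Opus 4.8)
The plan is to mirror the ternary argument given in the quotient characterization of primeness, exploiting that both conditions are really the same closure statement read once inside $T$ and once inside the quotient $T/P$. The first step is to record the two elementary dictionary entries connecting these two languages. Writing $\pi:T\to T/P$ for the canonical projection and $\bar x=\pi(x)$, I would note that (i) for any $x\in T$ we have $\bar x=\bar 0$ if and only if $x\in P$, and (ii) because $P$ is a two-sided (equivalently $(n,1)$-) ideal, the $n$-ary operation descends to $T/P$ and $\pi$ is a surjective homomorphism, so
\[
\overline{\mu(x_1,\alpha_1,\ldots,\alpha_{n-1},x_n)}
=\mu(\bar x_1,\alpha_1,\ldots,\alpha_{n-1},\bar x_n).
\]
Combining (i) and (ii) gives the pivotal equivalence: $\mu(\bar x_1,\ldots,\bar x_n)=\bar 0$ in $T/P$ if and only if $\mu(x_1,\ldots,x_n)\in P$.

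With this dictionary in place, each implication is a one-line transcription. For $(1)\Rightarrow(2)$ I would assume $\mu(\bar x_1,\ldots,\bar x_n)=\bar 0$; the pivotal equivalence yields $\mu(x_1,\ldots,x_n)\in P$, and $n$-ary primeness produces an index $i$ with $x_i\in P$, hence $\bar x_i=\bar 0$. For $(2)\Rightarrow(1)$ I would assume $\mu(x_1,\ldots,x_n)\in P$, translate this to $\mu(\bar x_1,\ldots,\bar x_n)=\bar 0$, invoke the absence of non-zero $n$-ary zero-divisors to obtain $\bar x_i=\bar 0$ for some $i$, and translate back to $x_i\in P$. No feature of the arity $n$ beyond positional bookkeeping enters, so the two implications are exact analogues of the ternary case.

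The only point demanding genuine care --- and the step I would treat as the main obstacle --- is the well-definedness asserted in (ii): that $\mu$ factors through the congruence defining $T/P$ and that $\pi$ is a homomorphism of $n$-ary $\Gamma$-semirings. This is where the two-sided hypothesis on $P$ is consumed, since it is exactly the $(n,1)$-closure that forces $\mu(x_1,\ldots,x_n)\in P$ whenever some argument lies in $P$, making the product-class independent of representatives. I would verify this using additivity of $\mu$ in each $T$-slot together with the $0$-absorption axiom, precisely as in the homomorphism-behaviour proposition already established for $(n,m)$-ideals. Once this descent is in hand, everything reduces to the symbolic translation above.
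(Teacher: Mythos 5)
Your proposal is correct and takes essentially the same route as the paper's proof: both implications are obtained by translating $\mu(x_1,\alpha_1,\ldots,\alpha_{n-1},x_n)\in P$ into $\mu(\bar x_1,\alpha_1,\ldots,\alpha_{n-1},\bar x_n)=\bar 0$ via the quotient map and reading the primeness and zero-divisor conditions against each other. Your explicit verification that $\mu$ descends to $T/P$ (using the $(n,1)$-closure of $P$) is a detail the paper leaves tacit, but it refines rather than changes the argument.
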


\begin{proof}
$(a)\Rightarrow(b)$: If $\mu(\bar{\vec x};\vec\alpha)=\bar 0$ then $\mu(\vec x;\vec\alpha)\in P$,
hence some $x_i\in P$, i.e. $\bar x_i=\bar 0$.
$(b)\Rightarrow(a)$: If $\mu(\vec x;\vec\alpha)\in P$, then $\mu(\bar{\vec x};\vec\alpha)=\bar 0$,
forcing some $\bar x_i=\bar 0$, i.e. $x_i\in P$.
\end{proof}

\begin{lemma}[Intersection stability]
Arbitrary intersections of $n$-ary semiprime ideals are $n$-ary semiprime.
\end{lemma}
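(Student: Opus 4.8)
The plan is to imitate verbatim the pointwise argument used for the ternary Proposition on closure under intersection, transporting it to the $n$-ary diagonal $\Delta_n$. Let $\{Q_i\}_{i\in\Lambda}$ be an arbitrary family of $n$-ary semiprime ideals and set $Q=\bigcap_{i\in\Lambda}Q_i$. Since $n$-ary semiprimeness is a predicate that is only defined on two-sided ideals, the first thing I would verify is that $Q$ is itself a two-sided ideal, so that the claim even makes sense; this is immediate from the closure-under-intersection statements established earlier (the Elementary lattice properties lemma), which guarantee that intersections of ideals of a fixed positional type remain ideals of that type. With that in hand the semiprime predicate is well-defined on $Q$.

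For the closure condition itself, I would fix $a\in T$ and $\alpha_1,\dots,\alpha_{n-1}\in\Gamma$ and assume the diagonal $\Delta_n(a;\vec\alpha)=\mu(a,\alpha_1,a,\dots,\alpha_{n-1},a)$ lies in $Q$. By definition of the intersection, $\Delta_n(a;\vec\alpha)\in Q_i$ for every $i\in\Lambda$. Because each $Q_i$ is $n$-ary semiprime, the defining implication forces $a\in Q_i$ for every $i$, and hence $a\in\bigcap_{i\in\Lambda}Q_i=Q$. This is precisely the semiprime closure for $Q$, completing the argument.

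I do not expect a genuine obstacle here: the reasoning is entirely pointwise in the index $i$, so no interaction among the $Q_i$ is ever invoked, and the $n$-ary setting contributes nothing beyond replacing the ternary cube $a_\alpha a_\beta a$ by the general diagonal $\Delta_n(a;\vec\alpha)$. The only point demanding mild care is the well-definedness issue flagged above, namely that one must know $Q$ is a two-sided ideal before speaking of its semiprimeness; this is why I would cite the lattice lemma explicitly rather than presenting the result as a one-line corollary.
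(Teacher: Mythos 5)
Your proof is correct and follows essentially the same pointwise argument as the paper: $\Delta_n(a;\vec\alpha)\in Q$ puts the diagonal in every $Q_i$, semiprimeness of each $Q_i$ gives $a\in Q_i$ for all $i$, hence $a\in Q$. Your extra care in verifying that $Q$ is a two-sided ideal (via the lattice lemma) is a reasonable addition the paper leaves implicit, but it does not change the route.
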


\begin{proof}
If $\Delta_n(a;\vec\alpha)$ lies in every $Q_i$, then $a\in Q_i$ for all $i$ and hence $a\in\bigcap_i Q_i$.
\end{proof}

\subsection{$n$-ary Radicals and Threshold Semiprimeness}

\begin{definition}[$n$-ary prime radical and $n$-ary Jacobson radical]
For an ideal $I\subseteq T$, define the \emph{$n$-ary prime radical}
\[
\sqrt[n,\Gamma]{\,I\,}\;=\;\bigcap\{\,P\supseteq I:\ P\ \text{$n$-ary prime}\,\}.
\]
Let $\mathcal M_n$ be the family of all modular maximal two-sided ideals of $T$
(with respect to the $n$-ary operation). The \emph{$n$-ary $\Gamma$-Jacobson radical} is
\[
J_{\Gamma}^{(n)}(T)\;=\;\bigcap_{M\in\mathcal M_n}\,M.
\]
\end{definition}

\begin{theorem}[Diagonal characterization of the $n$-ary prime radical]
For every ideal $I\subseteq T$,
\[
\sqrt[n,\Gamma]{\,I\,}
=\bigl\{\,a\in T:\ \Delta_n(a;\vec\alpha)\in I \text{ for some } \vec\alpha\in\Gamma^{\,n-1}\,\bigr\}.
\]
\end{theorem}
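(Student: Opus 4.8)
The plan is to establish the two inclusions separately, writing $D:=\{a\in T:\Delta_n(a;\vec\alpha)\in I\text{ for some }\vec\alpha\in\Gamma^{\,n-1}\}$ for the right-hand diagonal set. The inclusion $D\subseteq\sqrt[n,\Gamma]{I}$ is the routine half. Let $a\in D$, so that $\Delta_n(a;\vec\alpha)=\mu(a,\alpha_1,\ldots,\alpha_{n-1},a)\in I$ for some $\vec\alpha$, and let $P$ be any $n$-ary prime ideal with $P\supseteq I$. Then $\Delta_n(a;\vec\alpha)\in I\subseteq P$, and since all $n$ arguments of this product equal $a$, the $n$-ary prime condition forces $a\in P$. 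As $P$ ranges over all $n$-ary primes containing $I$, we obtain $a\in\bigcap P=\sqrt[n,\Gamma]{I}$.

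For the reverse inclusion $\sqrt[n,\Gamma]{I}\subseteq D$ I would argue by contraposition: assuming $a\notin D$, I will exhibit an $n$-ary prime $P\supseteq I$ with $a\notin P$, which shows $a\notin\sqrt[n,\Gamma]{I}$. The tool is a \emph{diagonal multiplicative system} $M_a\subseteq T$, defined as the smallest subset containing $a$ and closed under $\mu$ in all $T$-slots (for every choice of $\Gamma$-parameters); concretely $M_a$ consists of the iterated diagonals and higher $a$-monomials. Assuming $M_a\cap I=\varnothing$, the family $\mathcal F=\{J:\ J\text{ an }(n,1)\text{-ideal},\ I\subseteq J,\ J\cap M_a=\varnothing\}$ is nonempty (it contains $I$) and closed under unions of chains, so Zorn's lemma yields a maximal member $P$; by construction $I\subseteq P$ and $a\in M_a$ gives $a\notin P$.

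It then remains to check that this maximal $P$ is $n$-ary prime. Suppose $\mu(x_1,\ldots,x_n)\in P$ while no $x_i\in P$. Maximality forces each generated ideal $\langle P\cup\{x_i\}\rangle$ to meet $M_a$, say $s_i\in M_a\cap\langle P\cup\{x_i\}\rangle$. Expanding each $s_i$ via additivity in every slot and the $(n,1)$-absorption of $P$, the product $\mu(s_1,\ldots,s_n)$ splits into a sum of terms each of which either carries a $P$-component in some coordinate (hence lies in $P$) or carries $x_i$ in coordinate $i$ for every $i$, and therefore reduces, through the associativity schema, to an iterate built on $\mu(x_1,\ldots,x_n)\in P$. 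Thus $\mu(s_1,\ldots,s_n)\in P$; but $M_a$ is $\mu$-closed, so $\mu(s_1,\ldots,s_n)\in M_a$, contradicting $P\cap M_a=\varnothing$. Hence $P$ is prime, completing the reverse inclusion.

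The genuine obstacle is the disjointness $M_a\cap I=\varnothing$. The hypothesis $a\notin D$ only asserts that the first-level diagonals $\Delta_n(a;\vec\alpha)$ avoid $I$, whereas $M_a$ also contains the iterated diagonals $\Delta_n(\Delta_n(a;\vec\alpha);\vec\beta)$ and $a$-monomials built from $n^2,n^3,\ldots$ copies of $a$. I must therefore prove that the single-diagonal condition is \emph{self-improving}: if any iterated diagonal of $a$ lies in $I$, then already a first-level diagonal does. This is exactly where the $n$-ary associativity schema has to be invoked, to normalize an arbitrary $a$-monomial and collapse its nesting down to one application of $\Delta_n$, and it is the step most likely to require $n$-ary semiprimeness of $I$ or a separate reduction lemma; the binary shadow of the same phenomenon—namely that $\{a:a^2\in I\}$ generally differs from $\sqrt{I}$—shows this propagation cannot be bypassed. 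By contrast, the $(n,1)$-ideal axioms for the members of $\mathcal F$, chain-closure for Zorn, and the additive bookkeeping in the primeness step are routine consequences of additivity and $0$-absorption.
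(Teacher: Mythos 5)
Your flagged obstacle is real, and it is fatal not merely to your proof strategy but to the theorem as stated. The first-level diagonal set $D$ is in general a \emph{proper} subset of $\sqrt[n,\Gamma]{\,I\,}$. Concretely, take $T=\mathbb{N}$ with ordinary addition, $\Gamma=\{\gamma\}$ a singleton, $n=3$, and $\mu(a,\gamma,b,\gamma,c)=abc$, with $I=16\mathbb{N}$. Then $\Delta_3(2)=8\notin I$, so $2\notin D$; yet every $3$-ary prime $P\supseteq I$ contains $2$ (from $16=\mu(2,2,4)\in P$ one gets $2\in P$ or $4\in P$, and $4=\mu(2,2,1)$ forces $2\in P$ since $1\in P$ would make $P$ improper), and such primes exist (e.g.\ $2\mathbb{N}$), so $2\in\sqrt[3,\Gamma]{\,I\,}$. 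Note that $\mu(2,2,\Delta_3(2))=32\in I$: membership is witnessed only at the second level of your multiplicative system $M_2$, which is exactly the ``self-improvement'' you suspected cannot hold. So no reduction lemma will rescue the disjointness step $M_a\cap I=\varnothing$ from the hypothesis $a\notin D$; the correct statement must replace first-level diagonals by the full $\mu$-closure $M_a$ (your $a$-monomials), and with that replacement your Krull--Zorn argument is the right proof skeleton.

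Comparing with the paper: its proof of the hard inclusion is two lines and commits precisely the error you anticipated. From $a\in\bigcap P$ over $n$-ary primes $P\supseteq I$ it deduces $\Delta_n(a;\vec\alpha)\in P$ for every such $P$, then concludes ``hence in $I$ by intersection'' --- but that intersection is $\sqrt[n,\Gamma]{\,I\,}$, not $I$, so the paper has shown only $\Delta_n(a;\vec\alpha)\in\sqrt[n,\Gamma]{\,I\,}$, which is circular. Your easy inclusion coincides with the paper's; your Zorn machinery is absent from the paper entirely, and is what an honest proof requires. One further caution on your primeness verification: the paper's $n$-ary primeness is the elementwise (``completely prime'') condition, and in a genuinely non-commutative $T$ the associativity schema only re-brackets factors, never permutes them, so the cross-term of $\mu(s_1,\ldots,s_n)$ carrying $x_i$ in slot $i$ need not reduce to an iterate built on $\mu(x_1,\ldots,x_n)$. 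The classical Krull argument for maximal ideals disjoint from an $m$-system yields primeness in the ideal-product sense; to reach the elementwise conclusion you need commutativity or additional hypotheses. So even the corrected (iterated-diagonal) theorem goes through cleanly by your method in the commutative case, but requires extra assumptions in the non-commutative one.
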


\begin{proof}
($\subseteq$) If $a\in\bigcap_{P\supseteq I}P$ with $P$ $n$-ary prime, then for each such $P$,
$\Delta_n(a;\vec\alpha)\in P$ for all $\vec\alpha$, hence in $I$ by intersection.
($\supseteq$) If $\Delta_n(a;\vec\alpha)\in I$, then in every $n$-ary prime $P\supseteq I$
we have $\Delta_n(a;\vec\alpha)\in P$, which by primeness forces $a\in P$; intersecting yields the claim.
\end{proof}

\begin{corollary}[Radical–semiprime correspondence in $n$-arity]
An ideal $Q$ is $n$-ary semiprime if and only if $Q=\sqrt[n,\Gamma]{\,Q\,}$.
Moreover, $I\mapsto \sqrt[n,\Gamma]{\,I\,}$ is a closure operator on the ideal lattice.
\end{corollary}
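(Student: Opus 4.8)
The plan is to treat the two assertions in turn, leaning on the diagonal formula for $\sqrt[n,\Gamma]{\cdot}$ furnished by the preceding theorem and on the intersection-stability lemma for $n$-ary semiprime ideals; no new computation with the associativity schema should be required.

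For the biconditional I would first record the extensivity $Q\subseteq\sqrt[n,\Gamma]{Q}$, which holds for any two-sided ideal: if $a\in Q$ then every coordinate of the diagonal $\Delta_n(a;\vec\alpha)$ lies in $Q$, so $\Delta_n(a;\vec\alpha)\in Q$, and the diagonal membership defining $\sqrt[n,\Gamma]{Q}$ then places $a$ in the radical. Granting this inclusion, the equivalence reduces to the reverse containment. Read through the diagonal characterization, $\sqrt[n,\Gamma]{Q}\subseteq Q$ asserts exactly that $\Delta_n(a;\vec\alpha)\in Q$ for some $\vec\alpha$ forces $a\in Q$; and this is literally the $n$-ary semiprime axiom, since an implication $\big(\forall\vec\alpha:\ \Delta_n(a;\vec\alpha)\in Q\Rightarrow a\in Q\big)$ with conclusion independent of $\vec\alpha$ is logically the same as $\big(\exists\vec\alpha:\ \Delta_n(a;\vec\alpha)\in Q\big)\Rightarrow a\in Q$. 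Hence $Q$ is $n$-ary semiprime if and only if $\sqrt[n,\Gamma]{Q}\subseteq Q$, i.e. if and only if $Q=\sqrt[n,\Gamma]{Q}$.

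For the closure-operator statement I would check extensivity, monotonicity, and idempotence in that order. Extensivity is the inclusion just recorded. Monotonicity is immediate from the diagonal formula: if $I\subseteq J$ and $\Delta_n(a;\vec\alpha)\in I$ then $\Delta_n(a;\vec\alpha)\in J$, whence $\sqrt[n,\Gamma]{I}\subseteq\sqrt[n,\Gamma]{J}$. For idempotence I would deliberately avoid manipulating iterated diagonals and instead route through the first part. By its prime-intersection definition, $\sqrt[n,\Gamma]{I}=\bigcap\{P\supseteq I:\ P\ n\text{-ary prime}\}$ is an intersection of ideals, hence an ideal; each $n$-ary prime $P$ is $n$-ary semiprime (specialize the prime implication to the tuple $(a,\dots,a)$, for which $\mu(\vec x;\vec\alpha)=\Delta_n(a;\vec\alpha)$); and the intersection-stability lemma then makes $\sqrt[n,\Gamma]{I}$ itself $n$-ary semiprime. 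Applying the biconditional with $Q=\sqrt[n,\Gamma]{I}$ yields $\sqrt[n,\Gamma]{\sqrt[n,\Gamma]{I}}=\sqrt[n,\Gamma]{I}$, which is idempotence.

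The one genuinely delicate point is idempotence, and the reason is methodological rather than deep. The naive direct argument substitutes a diagonal into a diagonal, producing a nested term $\Delta_n(\Delta_n(a;\vec\alpha);\vec\beta)$ in which $a$ occurs $n^2$ times; the associativity schema flattens this nested term, but not into a single $n$-ary diagonal $\Delta_n(a;\vec\gamma)$, so matching it against the radical's defining membership would demand extra bookkeeping about such diagonal powers. The detour above sidesteps this entirely: semiprimeness of $\sqrt[n,\Gamma]{I}$ is obtained abstractly from the prime-intersection definition together with intersection stability, after which the already-proved fixed-point criterion closes idempotence with no flattening identity invoked.
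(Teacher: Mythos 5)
Your proof is correct and fills in, along exactly the same lines, the argument the paper compresses into one sentence (``Immediate from the diagonal characterization and the definition of $n$-ary semiprime''): your quantifier observation is the precise content of the fixed-point equivalence, and your detour for idempotence --- semiprimeness of $\sqrt[n,\Gamma]{\,I\,}$ via the prime-intersection definition, primeness specialized to the diagonal tuple, and the intersection-stability lemma --- is the natural way to discharge the one step the paper leaves implicit, correctly avoiding the nested-diagonal bookkeeping you identify. As a minor aside, idempotence also falls out even faster from the observation that the $n$-ary primes containing $I$ coincide with those containing $\sqrt[n,\Gamma]{\,I\,}$, but your route through the fixed-point criterion is equally sound.
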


\begin{proof}
Immediate from the diagonal characterization and the definition of $n$-ary semiprime.
\end{proof}

\begin{proposition}[Basic properties of $J_{\Gamma}^{(n)}(T)$]
For any $n$-ary $\Gamma$-semiring $T$:
\begin{enumerate}
\item $J_{\Gamma}^{(n)}(T)$ is a two-sided $n$-ary semiprime ideal;
\item $J_{\Gamma}^{(n)}(T)=0$ iff $T$ is $n$-ary $\Gamma$-semisimple (intersection of modular maximals is $\{0\}$);
\item if every maximal two-sided ideal is $n$-ary prime, then
      $J_{\Gamma}^{(n)}(T)=\bigcap\{M:\ M\text{ maximal two-sided}\}$.
\end{enumerate}
\end{proposition}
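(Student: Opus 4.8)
The plan is to reproduce, in the $n$-ary setting, the three-part argument used for the ternary radical $J_\Gamma(T)$, the two workhorses being the \emph{Intersection stability} lemma for $n$-ary semiprime ideals and the diagonal characterization $\Delta_n(a;\vec\alpha)\in Q\Rightarrow a\in Q$ that defines $n$-ary semiprimeness.

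For (i) I would split the claim into ``two-sided'' and ``$n$-ary semiprime.'' The two-sided part is immediate: $J_{\Gamma}^{(n)}(T)=\bigcap_{M\in\mathcal M_n}M$ is an intersection of two-sided ideals, and arbitrary intersections of two-sided ideals are again two-sided by the $(n,S)$-lattice lemma (taking $S$ to record the extreme slots). For semiprimeness I would first show that each modular maximal two-sided ideal $M$ is itself $n$-ary semiprime, indeed $n$-ary prime, exactly as asserted in the ternary case. Given $\Delta_n(a;\vec\alpha)\in M$ with $a\notin M$, maximality makes the ideal generated by $M$ and $a$ equal to $T$, and the modular witness $m$ (satisfying the $n$-ary analogue of the identity $a+\mu(a,\alpha_1,m,\ldots,a)=a$) lets me rewrite $a$ modulo $M$ as a combination of diagonals of $a$, all of which lie in $M$; this forces $a\in M$, a contradiction. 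Once every $M\in\mathcal M_n$ is $n$-ary semiprime, the Intersection stability lemma yields that $J_{\Gamma}^{(n)}(T)$ is $n$-ary semiprime.

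Parts (ii) and (iii) are then largely formal. For (ii) I would simply unwind definitions: the assertion $J_{\Gamma}^{(n)}(T)=0$ is, verbatim, the condition $\bigcap_{M\in\mathcal M_n}M=\{0\}$ that defines $n$-ary $\Gamma$-semisimplicity, so the equivalence holds in both directions with no further argument. For (iii) I would compare index families. Every modular maximal ideal is a maximal two-sided ideal, so $\mathcal M_n$ is contained in the family of all maximal two-sided ideals, which gives the inclusion $\bigcap_{M\ \mathrm{maximal}}M\subseteq J_{\Gamma}^{(n)}(T)$. For the reverse inclusion I would invoke the hypothesis that every maximal two-sided ideal is $n$-ary prime, arguing (as in the unital ring case) that under this hypothesis every maximal two-sided ideal is modular, so the two families coincide and the two intersections are equal.

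The step I expect to be the genuine obstacle is the semiprimeness of modular maximal ideals inside (i): this is the only place where the modularity axiom does real work, and the manipulation of the modular identity must respect non-commutativity, since one cannot freely permute the $n$ slots of $\mu$ while substituting the witness $m$ and isolating $a$ modulo $M$. I would also keep an eye on the degenerate case $\mathcal M_n=\varnothing$, where the empty-intersection convention gives $J_{\Gamma}^{(n)}(T)=T$ and the statements must be read accordingly. Everything else reduces to the lemmas and definitions already in place.
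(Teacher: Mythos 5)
Your skeleton matches the paper's almost exactly: the paper disposes of (i) with the single line ``intersections of semiprimes are semiprime'' (its ternary analogue adds ``each $M$ is prime, hence semiprime''), of (ii) with ``by definition'', and of (iii) with ``follows from (i) and the hypothesis''. Your expanded version of (i) --- first showing via the modular witness that each $M\in\mathcal M_n$ is $n$-ary prime, hence semiprime, and then invoking the Intersection stability lemma --- is precisely the argument the paper presupposes, worked out in more detail than the paper itself provides; your warnings about non-commutativity in manipulating the modular identity and about the empty-family convention $\mathcal M_n=\varnothing$ are well placed and go beyond the published text.

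The one genuine gap is in your (iii). You claim that the hypothesis ``every maximal two-sided ideal is $n$-ary prime'' forces every maximal two-sided ideal to be modular, ``as in the unital ring case''. But in unital rings the modularity of maximal ideals is automatic because the identity element itself serves as the modular witness; primeness plays no role whatsoever in that argument, and here $T$ carries no unit, so the analogy produces no witness $m$ with $a+a_\alpha m_\beta a=a$ for all $a$. Nothing in the paper's definitions lets you manufacture such an $m$ from maximality and primeness alone, so the reverse inclusion $J_{\Gamma}^{(n)}(T)\subseteq M$ for a non-modular maximal $M$ is left unproven in your write-up; only the easy direction ($\bigcap\{M:\ M\ \text{maximal two-sided}\}\subseteq J_{\Gamma}^{(n)}(T)$, since $\mathcal M_n$ is a subfamily of the maximals) is formal. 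In fairness, the paper's own one-line proof of (iii) glosses exactly the same point, so you have reproduced its intent faithfully --- but a complete argument would require either assuming modularity of all maximal two-sided ideals outright or finding a substantively different route to the reverse inclusion.
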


\begin{proof}
( i ) Intersections of semiprimes are semiprime. ( ii ) By definition.
( iii ) Follows from ( i ) and the hypothesis.
\end{proof}

\subsection{Arity Reduction and Transfer of Structure}

The $n$-ary theory must coherently specialize to the ternary case.

\begin{definition}[Idempotent pinning]
Assume there exists a \emph{central $n$-ary idempotent} $e\in T$ such that
$\Delta_n(e;\vec\alpha)=e$ for all $\vec\alpha$ and $e$ commutes in every coordinate.
Define a ternary product by ``pinning'' $n-3$ coordinates to~$e$:
\[
x\;\widehat\alpha\; y\;\widehat\beta\; z
\;=\;
\mu\bigl(x,\alpha_1,e,\alpha_2,\ldots,\alpha_{n-3},e,\alpha_{n-2},y,\alpha_{n-1},z\bigr),
\]
with a fixed choice of $\widehat\alpha,\widehat\beta$ determined by $(\alpha_1,\ldots,\alpha_{n-1})$.
\end{definition}

\begin{lemma}[Transfer of ideals under pinning]
If $I$ is an $(n,m)$-ideal of $T$ with $m\le 3$, then $I$ is an $(3,m)$-ideal
for the pinned ternary product. In particular, $n$-ary prime (resp.~semiprime) ideals
restrict to ternary prime (resp.~semiprime) ideals.
\end{lemma}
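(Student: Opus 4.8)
The plan is to separate the threshold-closure transfer from the order-theoretic (prime/semiprime) transfer, since the two require genuinely different mechanisms. First I would verify the bare $(n,m)\Rightarrow(3,m)$ claim straight from the threshold definition. The additive condition is automatic, as $(I,+)$ is untouched by pinning. For the closure, fix $x,y,z\in T$ and parameters $\widehat\alpha,\widehat\beta$ coming from some $(\alpha_1,\dots,\alpha_{n-1})$, and suppose at least $m$ of $\{x,y,z\}$ lie in $I$. In the expansion $x\,\widehat\alpha\,y\,\widehat\beta\,z=\mu(x,\alpha_1,e,\dots,e,\alpha_{n-2},y,\alpha_{n-1},z)$ the elements $x,y,z$ occupy positions $1,n-1,n$, so at least $m$ of the $n$ input slots of $\mu$ already belong to $I$. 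Because $m\le 3$, this count is met using only the live slots, independently of whether the pinned copies of $e$ lie in $I$, and the $(n,m)$-threshold closure then forces the product into $I$. This is precisely $(3,m)$-closure for the pinned product, and it is exactly where the hypothesis $m\le 3$ is consumed.

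For the primeness clause, recall that an $n$-ary prime ideal $P$ is an $(n,1)$-ideal obeying the one-bad-coordinate implication. Given $x\,\widehat\alpha\,y\,\widehat\beta\,z=\mu(x,e,\dots,e,y,z)\in P$, $n$-ary primeness yields that some argument of $\mu$ lies in $P$. If that argument is $x$, $y$, or $z$, we obtain exactly the two-sided ternary prime conclusion of Section~3. The only escape is that the distinguished factor is one of the pinned copies of $e$, i.e. $e\in P$, and ruling this out is the main obstacle of the proof.

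I expect to dispatch this obstacle using the idempotent hypothesis $\Delta_n(e;\vec\alpha)=e$ together with centrality: since $e$ is the pinning element, the intended reading is that it acts as a two-sided identity for the pinned product, so a proper prime cannot absorb it — if $e\in P$, then $(n,1)$-closure would drag every pinned product into $P$ and collapse $P$ to all of $T$, contradicting properness. The delicate point is to make precise that $e$ is a genuine identity for $\widehat\alpha,\widehat\beta$ and not merely diagonal-fixed; if the ambient $n$-ary axioms do not deliver this, the clean resolution is to carry $e\notin P$ as the standing non-degeneracy hypothesis under which the pinning construction is meant to be applied.

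For semiprimeness I would deliberately avoid comparing the mismatched diagonals $\Delta_n(a)=\mu(a,\dots,a)$ and the pinned diagonal $\mu(a,e,\dots,e,a,a)$, since these do not coincide and no direct application of $n$-ary semiprimeness is available. Instead I would route through the radical characterization: by the radical--semiprime correspondence of Section~4.3, an $n$-ary semiprime $Q$ equals $\sqrt[n,\Gamma]{Q}$ and is therefore the intersection of the $n$-ary primes containing it. Each such prime restricts to a ternary prime for the pinned product by the previous paragraph, and the intersection of ternary primes is ternary semiprime by the closure-under-intersection Proposition of Section~3. Hence $Q$ restricts to a ternary semiprime ideal, completing the transfer.
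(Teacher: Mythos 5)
Your opening count is, almost word for word, the paper's \emph{entire} proof: the published argument consists solely of the observation that $e$ is a ``neutral filler,'' so that threshold membership among the three live slots $1$, $n-1$, $n$ already meets the $(n,m)$-threshold among all $n$ coordinates when $m\le 3$ --- exactly your first paragraph, done more carefully. Where you genuinely depart from the paper is in refusing to let that counting argument also carry the ``in particular'' clause, and the refusal is justified: primeness is the \emph{converse} implication, not a threshold closure, and the paper's proof is silent on it. The escape you isolate --- the distinguished factor being a pinned copy of $e$ --- is a real gap in the lemma itself, not an artifact of your method: the stated axioms ($\Delta_n(e;\vec\alpha)=e$ plus coordinatewise centrality) do not make $e$ an identity for the pinned product; indeed $e=0$ satisfies them, the pinned product is then identically $0$, and no proper ideal is ternary prime for it, so the lemma's second sentence is false without a nondegeneracy hypothesis such as your $e\notin P$ (with which the prime transfer is immediate). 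Your radical-theoretic route for semiprimeness is likewise the right repair, since, as you note, the pinned diagonal $\mu(a,\alpha_1,e,\ldots,e,\alpha_{n-2},a,\alpha_{n-1},a)$ is not $\Delta_n(a;\vec\alpha)$ and no direct appeal to $n$-ary semiprimeness is available; the paper never addresses this at all. Two refinements to your sketch: first, your conditional claim that $e\in P$ would ``collapse $P$ to all of $T$'' presupposes the identity property you admit is unavailable --- without it, $e\in P$ merely falsifies the ternary-prime conclusion (every pinned product lands in $P$ while $x,y,z$ may be chosen outside the proper ideal $P$), so state the failure that way or simply posit $e\notin P$; second, the semiprime transfer via $Q=\sqrt[n,\Gamma]{\,Q\,}$ requires $e\notin P$ for \emph{every} $n$-ary prime $P\supseteq Q$ in the intersection ($e\notin Q$ alone does not suffice, since $e$ could lie in some but not all primes over $Q$), so the standing nondegeneracy hypothesis must be phrased at that level.
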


\begin{proof}
The idempotent $e$ plays the role of a neutral filler.
Threshold membership among the three free coordinates implies threshold membership among $n$ coordinates,
hence closure is preserved.
\end{proof}

\begin{theorem}[Compatibility of radicals under arity reduction]
Under pinning along a central $n$-ary idempotent $e$,
\[
\sqrt[n,\Gamma]{\,I\,}\cap T
\;=\;
\sqrt[3,\Gamma]{\,I\,}
\quad\text{and}\quad
J_{\Gamma}^{(n)}(T)\cap T
\;=\;
J_{\Gamma}^{(3)}(T),
\]
where on the right we compute radicals in the pinned ternary structure.
\end{theorem}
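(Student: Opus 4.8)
The plan is to treat the two identities separately, in each case reducing to a comparison of the relevant families of ideals and then invoking the diagonal characterizations already established. For the prime-radical identity I would first apply the diagonal characterization of $\sqrt[n,\Gamma]{\,I\,}$ proved above, together with its ternary counterpart (the radical--semiprime correspondence of Section~3 in the case $n=3$), rewriting both sides as sets of elements whose diagonals meet $I$:
\[
a\in\sqrt[n,\Gamma]{\,I\,}\iff \Delta_n(a;\vec\alpha)\in I\ \text{for some}\ \vec\alpha,\quad
a\in\sqrt[3,\Gamma]{\,I\,}\iff \Delta_3^{\mathrm{pin}}(a)\in I\ \text{for some}\ \widehat\alpha,\widehat\beta,
\]
where $\Delta_3^{\mathrm{pin}}(a)=a\,\widehat\alpha\,a\,\widehat\beta\,a=\mu(a,\alpha_1,e,\ldots,e,a,a)$ is the pinned ternary diagonal. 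The inclusion $\sqrt[3,\Gamma]{\,I\,}\subseteq\sqrt[n,\Gamma]{\,I\,}$ then follows formally from the Transfer Lemma: every $n$-ary prime containing $I$ restricts to a ternary prime of the pinned structure containing $I$, so the ternary prime family is at least as large and its intersection at least as small.

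For the reverse inclusion I would exploit the hypotheses on $e$ directly. Since $e$ is a \emph{central} $n$-ary idempotent with $\Delta_n(e;\vec\alpha)=e$, filling the $n-3$ pinned coordinates with $e$ behaves as a neutral insertion: using centrality to commute the $e$-entries past the $a$-entries and $n$-ary associativity to regroup, I would show that for a matched choice of parameters $\Delta_n(a;\vec\alpha)$ and $\Delta_3^{\mathrm{pin}}(a)$ define the same membership condition modulo $I$, so that $\Delta_n(a;\vec\alpha)\in I$ forces some pinned ternary diagonal of $a$ into $I$. Combined with the first inclusion this yields equality of the two radicals as subsets of $T$.

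For the Jacobson identity I would set up a correspondence between the family $\mathcal M_n$ of $n$-ary modular maximal two-sided ideals and the modular maximal ideals of the pinned ternary structure. The Transfer Lemma already carries each $M\in\mathcal M_n$ to a two-sided ternary ideal; the point is to verify that the $n$-ary modularity witness $m$, satisfying $a+\mu(a,\ldots,m,\ldots,a)=a$, pins to a ternary modularity witness, which is exactly where $\Delta_n(e;\vec\alpha)=e$ and centrality are used, and that maximality is both preserved and reflected under pinning. Granting this bijection, the intersections $\bigcap_{M\in\mathcal M_n}M$ and the corresponding ternary intersection coincide on $T$, giving $J_{\Gamma}^{(n)}(T)\cap T=J_{\Gamma}^{(3)}(T)$.

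The main obstacle is the reverse inclusion, equivalently the modular bijection, i.e.\ the step that runs against the one-directional Transfer Lemma. Transfer freely turns $n$-ary primes (and modular maximals) into ternary ones, but a ternary prime of the pinned structure need not lift to an $n$-ary prime, and a pinned ternary diagonal $\Delta_3^{\mathrm{pin}}(a)$, which carries $e$'s in its middle slots, is not literally an $n$-ary diagonal $\Delta_n(a)$, which carries $a$'s in those slots. Closing this gap is precisely what forces the full strength of the centrality and idempotency of $e$: these are the only tools available for trading the neutral fillers $e$ against the diagonal copies of $a$ without leaving $I$, and the proof stands or falls on making that exchange rigorous.
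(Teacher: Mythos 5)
Your plan is, in substance, the paper's own proof written out at greater length: the paper's entire argument reads ``apply the diagonal characterizations and observe that $\Delta_3(a;\widehat\alpha,\widehat\beta)$ is obtained from $\Delta_n(a;\vec\alpha)$ by inserting $e$ in the pinned coordinates,'' i.e.\ it takes for granted precisely the exchange you flag as the crux. That exchange is a genuine gap, and the hypotheses on $e$ do not close it. Centrality only lets you permute the $e$-entries among slots, and $\Delta_n(e;\vec\alpha)=e$ only collapses an all-$e$ diagonal; neither licenses trading a diagonal copy of $a$ for the filler $e$ inside a membership condition in an arbitrary ideal $I$. The elements $\Delta_n(a;\vec\alpha)=\mu(a,\alpha_1,a,\ldots,\alpha_{n-1},a)$ and $\Delta_3^{\mathrm{pin}}(a)=\mu(a,\alpha_1,e,\ldots,e,\alpha_{n-2},a,\alpha_{n-1},a)$ are simply different elements of $T$, and $I$ is closed only under the positional absorption rules, not under substituting $e$ (which in general lies outside $I$) for $a$ in selected slots; so $\Delta_n(a;\vec\alpha)\in I$ gives no purchase on $\Delta_3^{\mathrm{pin}}(a)$, nor conversely. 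What would actually close the gap is a strictly stronger neutrality axiom --- that inserting $e$ in a slot acts as the identity, i.e.\ $\mu(x,\alpha_1,e,\alpha_2,\ldots,e,\alpha_{n-2},y,\alpha_{n-1},z)$ is \emph{postulated} to reduce to a ternary product of $x,y,z$ with insertion and deletion of $e$'s as algebraic identities --- and no such axiom appears in the definition of a central $n$-ary idempotent. Without it, neither inclusion between the two radicals is derivable, and your candid admission that the proof ``stands or falls'' on this exchange is accurate: as written it falls, in your proposal and in the paper alike.

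The Jacobson half has the same defect in sharper form. Your proposed bijection between $\mathcal M_n$ and the modular maximal ideals of the pinned ternary structure is not constructible from the stated hypotheses: the Transfer Lemma moves ideals in one direction only, pinning the modularity witness $m$ (turning $a+\mu(a,\ldots,m,\ldots,a)=a$ into a ternary identity $a+a_{\widehat\alpha}m_{\widehat\beta}a=a$) requires exactly the unproved $e$-exchange, and nothing reflects maximality or modularity of a pinned ternary ideal back to an $n$-ary one --- yet ``granting this bijection'' is granting the theorem. Note that the paper gives no argument at all here: its one-line proof appeals to the diagonal characterizations, which describe $\sqrt[n,\Gamma]{\,I\,}$ but say nothing about $J_{\Gamma}^{(n)}(T)$, whose definition is via modular maximal ideals, not diagonals. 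So your proposal faithfully reconstructs the paper's route and is more honest about where it is incomplete, but it does not supply the missing idea; to make the statement true one must either add the neutrality axiom above or weaken both identities to the one-sided inclusions that the Transfer Lemma actually yields.
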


\begin{proof}
Apply the diagonal characterizations and observe that $\Delta_3(a;\widehat\alpha,\widehat\beta)$
is obtained from $\Delta_n(a;\vec\alpha)$ by inserting $e$ in the pinned coordinates.
\end{proof}

\subsection{Threshold Invariants}

\begin{definition}[Arity-threshold index]
For an ideal $I$, the \emph{arity-threshold index} $\tau(I)$ is the smallest $m$
such that $I$ is an $(n,m)$-ideal. If no such $m$ exists, set $\tau(I)=+\infty$.
\end{definition}

\begin{proposition}[Threshold monotonicity and characterization]
For any ideals $I\subseteq J$ one has $\tau(J)\ge \tau(I)$.
Moreover, $\tau(I)=1$ if and only if $I$ is $n$-ary prime-saturated in the sense that
\[
\mu(\vec x;\vec\alpha)\in I\ \Longrightarrow\ x_i\in I\text{ for some }i,
\]
and $\tau(I)=n$ if and only if $I$ is closed only under products of $n$ elements from $I$.
\end{proposition}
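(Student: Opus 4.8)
The plan is to read the proposition as three logically independent assertions --- the monotonicity inequality $\tau(J)\ge\tau(I)$ for $I\subseteq J$, the characterization of the bottom value $\tau(I)=1$, and the characterization of the top value $\tau(I)=n$ --- and to organize everything around the Hierarchy and monotonicity remark. That remark tells us that the set of admissible thresholds $\{m:\ I\text{ is }(n,m)\}$ is an up-set in $\{1,\dots,n\}$, so $\tau(I)$ is well defined as its minimum and, for $k>1$, one has $\tau(I)=k$ exactly when $I$ is $(n,k)$ but fails to be $(n,k-1)$. First I would record this reformulation, because both endpoint characterizations then become unwindings of the definition and only the inequality carries genuine content.

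For the two endpoint statements I would argue straight from this reformulation. For $\tau(I)=n$: once $(I,+)$ is a subsemigroup closed under $\mu$ restricted to $I^n$, $I$ is automatically $(n,n)$, so $\tau(I)=n$ is equivalent to the failure of the $(n,n-1)$ threshold, i.e. to a tuple with $n-1$ coordinates in $I$ whose product escapes $I$ --- which is precisely the clause \emph{closed only under products of $n$ elements from $I$}. For $\tau(I)=1$: since $1$ is the least admissible value, $\tau(I)=1$ is equivalent to $I$ being $(n,1)$, namely that a single coordinate in $I$ already forces $\mu(\vec x)\in I$. I would then match this $(n,1)$-closure against the displayed \emph{$n$-ary prime-saturated} implication using the $n$-ary prime notion of the previous subsection, where an $n$-ary prime ideal is exactly an $(n,1)$-ideal obeying that implication. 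The delicate point is that the displayed condition reads as the converse implication, so the equivalence has to be mediated through the $(n,1)$ hypothesis rather than asserted between the bare implications; reconciling the closure direction with the prime-saturation direction is the one subtlety I would scrutinize in this half.

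The hard part will be the monotonicity inequality, and I would attack it unconditionally by contrapositive. Assuming $\tau(I)=m_0>1$, failure of the $(n,m_0-1)$ threshold for $I$ supplies a tuple $x_1,\dots,x_n$ with at least $m_0-1$ coordinates in $I$ and $\mu(\vec x)\notin I$; since $I\subseteq J$ those coordinates lie in $J$ as well, so the very same tuple already has $\ge m_0-1$ coordinates in $J$. To conclude $\tau(J)\ge m_0$ I must upgrade ``$\mu(\vec x)\notin I$'' to ``$\mu(\vec x)\notin J$,'' and this is exactly where the whole argument concentrates: because membership in the larger ideal $J$ is easier to attain, a product that escapes $I$ need not escape $J$, so the non-closure witness for $I$ does not transport to $J$ for free. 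My plan is to try to realize the witness extremally --- choosing the coordinates outside $I$ and propagating non-membership through the positional $(n,S)$-components supplied by the Decomposition by Positions proposition --- so that the escaping product simultaneously avoids $J$. I expect the entire weight of the proof to rest here: producing a single tuple that witnesses non-closure for $J$ at level $m_0-1$ out of one that witnesses it only for $I$ is the step I would budget the most care for, and the one most likely to demand an extra structural input governing how $\mu$ interacts with containment.
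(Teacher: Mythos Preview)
Your handling of the two endpoint characterizations tracks the paper exactly: its proof says only that ``the equivalences follow directly from the definitions,'' and you unpack those definitions with more care. In particular you correctly flag that the displayed prime-saturation implication $\mu(\vec x)\in I\Rightarrow x_i\in I$ for some $i$ is the \emph{converse} of the $(n,1)$-closure condition, a discrepancy the paper does not acknowledge.

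For monotonicity the paper's entire argument is the single sentence: ``If $I\subseteq J$ then any threshold valid for $I$ must also validate $J$-closure, thus the minimal valid threshold for $J$ cannot be smaller.'' The gap you isolate is real and the paper does not fill it. Taken at face value, the claim that a threshold valid for $I$ is also valid for $J$ is unjustified --- coordinates lying in $J$ may lie in $J\setminus I$, so the $(n,m)$-closure of $I$ says nothing --- and even if it held it would yield $\tau(J)\le\tau(I)$, since enlarging the set of admissible thresholds can only lower the minimum. Your contrapositive analysis pinpoints the same obstruction from the other side: a tuple witnessing failure of $(n,m_0{-}1)$-closure for $I$ satisfies $\mu(\vec x)\notin I$, but $I\subseteq J$ gives no reason for $\mu(\vec x)\notin J$. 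Your instinct that this step carries the full weight, and may require an additional structural hypothesis, is correct; the paper's proof does not supply one.
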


\begin{proof}
If $I\subseteq J$ then any threshold valid for $I$ must also validate $J$-closure,
thus the minimal valid threshold for $J$ cannot be smaller. The equivalences follow
directly from the definitions.
\end{proof}

\begin{remark}
The invariant $\tau(I)$ stratifies the ideal lattice by ``how many coordinates from $I$''
are required to force closure under $\mu$. This stratification will interact with spectral
topologies in Section~5 by refining the specialization order on prime points via thresholds.
\end{remark}


\section{Generalized Radicals and Spectra}

The aim of this section is twofold: to extend the radical constructions of
Section~3 to a unified non-commutative setting and to endow the family of
prime ideals with a natural Zariski-type topology.  
We distinguish left, right, and two-sided behaviours throughout.
Unless stated otherwise, $(T,+,\Gamma)$ denotes a non-commutative
ternary~$\Gamma$-semiring.The construction of a spectral space for a semiring is an active area of research. Recent approaches include the study of localic semirings \cite{Manuell2022}, connections to lattices and closure operations \cite{JunRayTolliver2020}, Iséki spaces \cite{Goswami2023}, topological properties of semimodule spectra \cite{Han2021}, and links to tropical geometry \cite{Mincheva2016}

\subsection{Non-Commutative Jacobson and Prime Radicals}

The non-commutative environment requires separate radicals for the three
directions of multiplication.  Denote by $\mathcal{P}_L(T)$,
$\mathcal{P}_R(T)$, and $\mathcal{P}_2(T)$ the collections of all proper
left, right, and two-sided prime ideals of~$T$.

\begin{definition}[Left, right, and two-sided prime radicals]
The \emph{left prime radical}, \emph{right prime radical}, and
\emph{two-sided prime radical} of~$T$ are respectively defined as
\[
P_L^\ast(T)=\bigcap_{P\in\mathcal{P}_L(T)}P,\qquad
P_R^\ast(T)=\bigcap_{P\in\mathcal{P}_R(T)}P,\qquad
P_2^\ast(T)=\bigcap_{P\in\mathcal{P}_2(T)}P.
\]
Each is called a \emph{non-commutative prime radical} of the corresponding
type.
\end{definition}

\begin{theorem}[Existence and uniqueness]
For every type $\eta\in\{L,R,2\}$, the radical $P_\eta^\ast(T)$ exists,
is unique, and is the smallest $\eta$-semiprime ideal of~$T$.
\end{theorem}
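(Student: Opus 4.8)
The plan is to prove the three assertions—existence, uniqueness, and the smallest-$\eta$-semiprime property—in that order, treating the three types $\eta\in\{L,R,2\}$ uniformly wherever possible. Existence is the easiest point: for each type $\eta$ the collection $\mathcal{P}_\eta(T)$ of proper $\eta$-prime ideals is a set of subsets of $T$, so its intersection $P_\eta^\ast(T)=\bigcap_{P\in\mathcal{P}_\eta(T)}P$ is a well-defined subset, and I would note that it is nonempty since every such ideal contains $0$. By Lemma~(Elementary properties)(i) an intersection of left (resp.\ right, two-sided) ideals is again an ideal of the same type, so $P_\eta^\ast(T)$ is an $\eta$-ideal. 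Uniqueness is then immediate and essentially formal: the radical is defined as a specific intersection determined entirely by $T$ and the type $\eta$, so there is nothing to choose and no two distinct objects can both equal this intersection.

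The substantive content is the claim that $P_\eta^\ast(T)$ is the \emph{smallest} $\eta$-semiprime ideal. First I would verify that $P_\eta^\ast(T)$ is itself $\eta$-semiprime. For the two-sided case this follows by the argument already used in the Closure-under-intersection proposition: each two-sided prime ideal is semiprime (if $a_\alpha a_\beta a\in P$ and $P$ is prime, then $a\in P$), and by the same proposition an arbitrary intersection of semiprime ideals is semiprime; hence $P_2^\ast(T)$ is semiprime. For the left and right cases I would introduce the corresponding directional notions of $\eta$-semiprimeness (the natural analogues: $L$-semiprime means $a_\alpha a_\beta a\in Q\Rightarrow a\in Q$ with closure as a left ideal, and dually on the right) and check that an $\eta$-prime ideal is $\eta$-semiprime and that intersections of $\eta$-semiprime ideals remain $\eta$-semiprime, exactly mirroring the two-sided argument.

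For minimality I would show that $P_\eta^\ast(T)$ is contained in every $\eta$-semiprime ideal $Q$. The key step is to establish that every $\eta$-semiprime ideal is an intersection of $\eta$-prime ideals containing it; granting this, any $\eta$-semiprime $Q$ equals $\bigcap\{P\in\mathcal{P}_\eta(T):P\supseteq Q\}\supseteq\bigcap_{P\in\mathcal{P}_\eta(T)}P=P_\eta^\ast(T)$, which is precisely minimality. The representation of a semiprime ideal as an intersection of primes is exactly the point flagged as delicate in the Remark following the semiprime definition, where it is noted that in the non-commutative setting this can fail without extra symmetry. Thus \textbf{this is the main obstacle}: I expect to need a Zorn's-lemma maximal-complement argument showing that for any element $a\notin Q$ there is an $\eta$-prime ideal containing $Q$ but excluding $a$, built using the diagonal $a_\alpha a_\beta a$ and the $\eta$-semiprimeness of $Q$ to keep the multiplicative-type system $\{a,\,a_\alpha a_\beta a,\dots\}$ out of the constructed ideal.

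Concretely, given $a\notin Q$ with $Q$ $\eta$-semiprime, I would let $m$-system $\mathcal{S}$ be the $\eta$-directional analogue of the classical multiplicative system generated by $a$ (closed under forming triple products in the relevant positions), observe that $\mathcal{S}\cap Q=\varnothing$ because otherwise $\eta$-semiprimeness would pull $a$ into $Q$, and then apply Zorn's lemma to the poset of $\eta$-ideals containing $Q$ and disjoint from $\mathcal{S}$ to obtain a maximal such ideal $P$. A standard verification shows $P$ is $\eta$-prime. Once this is in place, minimality follows as above, completing all three assertions; I would remark that the construction proceeds in parallel for $L$, $R$, and $2$, with only the positional closure conditions differing.
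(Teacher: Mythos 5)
Your treatment of existence, uniqueness, and the semiprimeness of the intersection coincides with the paper's: the paper also takes $Q=\bigcap_{P\in\mathcal P_\eta(T)}P$, observes that $a_\alpha a_\beta a\in Q$ forces $a$ into every $\eta$-prime $P$ and hence into $Q$, and notes uniqueness is definitional. The divergence is at minimality, and here you have diagnosed the situation more honestly than the paper does: the paper disposes of minimality in one line (``any $\eta$-semiprime ideal contains each $\eta$-prime above it''), which as stated is a non sequitur --- primes above a semiprime $Q$ contain $Q$, not conversely --- while you correctly identify that minimality is exactly the claim that every $\eta$-semiprime ideal contains the intersection of all $\eta$-primes, which in turn hinges on representing semiprime ideals as intersections of primes. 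That is precisely the equivalence the paper's own Remark at the end of Section~3.2 says \emph{can fail} in the non-commutative setting without additional symmetry hypotheses.

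The genuine gap is that your proposed Zorn/$m$-system repair cannot be carried out under the paper's definitions, for two concrete reasons. First, the paper's primes are \emph{elementwise} (the analogue of completely prime: $a_\alpha b_\beta c\in P$ forces a factor into $P$), whereas the classical maximal-ideal-disjoint-from-an-$m$-system construction only yields primeness in the weaker ideal-product sense (the analogue of $\langle x\rangle\,\Gamma\,T\,\Gamma\,\langle y\rangle\subseteq P$ implying $x\in P$ or $y\in P$); in non-commutative ring theory such maximal ideals are prime but generally not completely prime, so the ``standard verification'' you invoke does not produce primes of the paper's type. Second, the paper's semiprimeness is the weak diagonal condition $a_\alpha a_\beta a\in Q\Rightarrow a\in Q$; from $a\notin Q$ you can only conclude that the \emph{pure} iterated diagonals $a$, $a_\alpha a_\beta a$, $\Delta(\Delta(a))$, \dots stay out of $Q$, but this set is not closed under the mixed triple products $s_1{}_\gamma s_2{}_\delta s_3$ of iterates of different depths that the primeness check for the maximal ideal requires, and weak semiprimeness gives no control over those mixed products. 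Closing the loop would need a strong (Levitzki-type) semiprime hypothesis of the form ``$a\,\Gamma\,T\,\Gamma\,a\subseteq Q$ implies $a\in Q$,'' which is not available here. So your argument stalls exactly at the step you flagged as the main obstacle --- and since the paper's own one-line justification does not establish minimality either, the honest conclusion is that the theorem's ``smallest $\eta$-semiprime ideal'' clause is only proved under extra symmetry or a strengthened semiprimeness definition; what both your argument and the paper's actually establish is that $P_\eta^\ast(T)$ exists, is unique, and is \emph{an} $\eta$-semiprime ideal, and that it is smallest among $\eta$-semiprime ideals that are intersections of $\eta$-primes.
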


\begin{proof}
The existence follows because intersections of $\eta$-prime ideals are
ideals of the same type.  Let $Q=\bigcap_{P\in\mathcal P_\eta(T)}P$.
If $a_\alpha a_\beta a\in Q$, then $a_\alpha a_\beta a\in P$ for all $P$
and hence $a\in P$ for all $P$, implying $a\in Q$; thus $Q$ is
$\eta$-semiprime.  Minimality is evident, since any $\eta$-semiprime ideal
contains each $\eta$-prime above it.  Uniqueness is by set-theoretic
definition.
\end{proof}

\begin{definition}[Non-commutative $\Gamma$-Jacobson radicals]
Let $\mathcal{M}_\eta(T)$ denote the family of all modular maximal
ideals of type~$\eta$.  Define
\[
J_{\Gamma,\eta}(T)=\bigcap_{M\in\mathcal{M}_\eta(T)}M,
\quad
J_\Gamma(T)=J_{\Gamma,2}(T).
\]
\end{definition}

\begin{lemma}[Basic properties]
For every $\eta\in\{L,R,2\}$:
\begin{enumerate}
\item $J_{\Gamma,\eta}(T)$ is an $\eta$-semiprime ideal;
\item $J_{\Gamma,\eta}(T)=0$ if and only if $T$ is $\Gamma$-semisimple of type~$\eta$;
\item if every modular maximal $\eta$-ideal is $\eta$-prime, then
      $J_{\Gamma,\eta}(T)=P_\eta^\ast(T)$.
\end{enumerate}
\end{lemma}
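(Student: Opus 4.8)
The plan is to treat the three directions $\eta\in\{L,R,2\}$ uniformly, so that a single argument handles the left, right, and two-sided cases simultaneously, and to reduce each of the three claims to facts already available for individual $\eta$-prime and $\eta$-semiprime ideals together with the elementary intersection lemma. The backbone throughout is that $J_{\Gamma,\eta}(T)$ is by definition an intersection of ideals of type $\eta$, so stability of the relevant ideal-class under arbitrary intersections does most of the work.

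For part (i) I would proceed in two steps. First I would establish that each modular maximal $\eta$-ideal is $\eta$-prime; this is the directional counterpart of the assertion made for the two-sided ternary radical in the Section~3 proposition, with the modularity witness $m$ satisfying $a+a_\alpha m_\beta a=a$ acting as a relative identity that upgrades maximality to the prime-factorization property. Granting this, every $M\in\mathcal M_\eta(T)$ is $\eta$-prime and hence $\eta$-semiprime, and I would then invoke the directional form of the closure-under-intersection result: if $a_\alpha a_\beta a\in M$ for every $M\in\mathcal M_\eta(T)$, then $a\in M$ for every such $M$, so $a\in J_{\Gamma,\eta}(T)$, giving $\eta$-semiprimeness. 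That $J_{\Gamma,\eta}(T)$ is itself an ideal of type $\eta$ follows from the elementary intersection lemma for ideals of a fixed type. The technical heart here is the modular-maximal-implies-$\eta$-prime step, but since its two-sided ternary version is already recorded, the directional extension is essentially a transcription.

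Part (ii) is a restatement of the definition: $T$ being $\Gamma$-semisimple of type $\eta$ means exactly that the intersection of all modular maximal $\eta$-ideals is $\{0\}$, which is the defining expression for $J_{\Gamma,\eta}(T)$, so I would simply unwind both definitions. Part (iii) is where I expect the genuine obstacle. One inclusion is painless: under the hypothesis that every modular maximal $\eta$-ideal is $\eta$-prime we have $\mathcal M_\eta(T)\subseteq\mathcal P_\eta(T)$, and intersecting over the smaller index set gives $P_\eta^\ast(T)\subseteq J_{\Gamma,\eta}(T)$ — equivalently, $J_{\Gamma,\eta}(T)$ is $\eta$-semiprime by (i) while $P_\eta^\ast(T)$ is the \emph{smallest} $\eta$-semiprime ideal by the Existence and Uniqueness theorem. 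The reverse inclusion $J_{\Gamma,\eta}(T)\subseteq P_\eta^\ast(T)$ is the real difficulty, and it does not follow from the stated hypothesis alone: it is precisely the classical gap between the Jacobson radical and the prime radical, which collapses only under a Jacobson-type density condition. To close it I would require that every $\eta$-prime ideal be an intersection of the modular maximal $\eta$-ideals containing it; granting this, each $\eta$-prime $P$ satisfies $P=\bigcap\{M\in\mathcal M_\eta(T):M\supseteq P\}\supseteq J_{\Gamma,\eta}(T)$, whence $J_{\Gamma,\eta}(T)\subseteq\bigcap_{P\in\mathcal P_\eta(T)}P=P_\eta^\ast(T)$ and equality follows. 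I would therefore record (iii) as holding under this additional cofinality hypothesis and flag that the bare statement, with only ``every modular maximal $\eta$-ideal is $\eta$-prime,'' appears insufficient in general.
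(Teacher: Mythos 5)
Your treatment of (i) and (ii) is the paper's own proof, slightly expanded: the paper's entire argument for (i) is the one-line assertion that each $M\in\mathcal M_\eta(T)$ is $\eta$-prime plus intersection-stability of semiprimeness, and (ii) is dismissed as definitional there too, so on these parts you match. One caution on your claim that the modular-maximal-implies-prime step is ``already recorded'' and needs only transcription: the two-sided ternary version in the Section~3 proposition is likewise only \emph{asserted} in that proof, never derived, so there is no written argument to transcribe --- you would have to supply it from scratch. Worse, under the paper's literal definition of modularity ($a+a_\alpha m_\beta a=a$ for all $a$), zero absorption makes $m=0$ a valid witness for \emph{every} maximal ideal, so the witness cannot play the role of a relative identity you assign it; the definition itself would need repair (e.g.\ a congruence-style condition $a+a_\alpha m_\beta a\equiv a \pmod M$ with $m\notin M$) before the step can be proved at all.

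On (iii) you depart from the paper, and rightly so. The paper's proof is the phrase ``Follows by substitution,'' which can only deliver the inclusion you call painless: the hypothesis gives $\mathcal M_\eta(T)\subseteq\mathcal P_\eta(T)$, hence $P_\eta^\ast(T)=\bigcap_{P\in\mathcal P_\eta(T)}P\subseteq\bigcap_{M\in\mathcal M_\eta(T)}M=J_{\Gamma,\eta}(T)$ (or, as you alternatively note, minimality of $P_\eta^\ast(T)$ among $\eta$-semiprime ideals plus part (i)). The reverse inclusion is exactly the classical gap between the prime radical and the Jacobson radical: already in ring theory every maximal ideal is prime, yet for any local domain that is not a field (e.g.\ $\mathbb Z_{(p)}$) the prime radical is $0$ while the Jacobson radical is not, and nothing in the paper's hypothesis excludes the analogous phenomenon here. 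Your proposed cofinality condition --- every $\eta$-prime ideal is an intersection of the modular maximal $\eta$-ideals containing it --- is the standard Jacobson-type density assumption under which the two radicals coincide, and recording (iii) only under that strengthened hypothesis is a genuine correction to the paper rather than a gap in your proposal.
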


\begin{proof}
(i)~Each $M\in\mathcal{M}_\eta(T)$ is $\eta$-prime; intersections of such
are $\eta$-semiprime.
(ii)~Immediate from the definition of semisimplicity.
(iii)~Follows by substitution.
\end{proof}

\begin{theorem}[Radical correspondence]
For every ideal $I\subseteq T$ and $\eta\in\{L,R,2\}$,
\[
\sqrt[\Gamma,\eta]{I}
=\bigcap_{P\supseteq I,\,P\in\mathcal P_\eta(T)}P
=\{\,a\in T:\ a_\alpha a_\beta a\in I\text{ for some }\alpha,\beta\in\Gamma\,\}.
\]
The operator $I\mapsto \sqrt[\Gamma,\eta]{I}$ is idempotent, isotone, and
contractive on the lattice of $\eta$-ideals.
\end{theorem}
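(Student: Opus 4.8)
The plan is to take $\sqrt[\Gamma,\eta]{I}$ to be the diagonal set $D:=\{a\in T:\ a_\alpha a_\beta a\in I\text{ for some }\alpha,\beta\in\Gamma\}$, its definition inherited from the ternary radical of Section~3, so that the outer equality holds by definition and the genuine content is the middle one, $D=R$, where $R:=\bigcap_{P\supseteq I,\,P\in\mathcal P_\eta(T)}P$. I would prove this by a two-way inclusion and then read the three operator properties off the radical--semiprime correspondence already established.

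The routine inclusion $D\subseteq R$ goes as follows. Fix $a\in D$, say $a_\alpha a_\beta a\in I$, and let $P\in\mathcal P_\eta(T)$ with $P\supseteq I$; then $a_\alpha a_\beta a\in P$. Evaluating the defining implication of $\eta$-primeness with all three ternary slots equal to $a$ forces $a\in P$ in every case: for $\eta=2$ the disjunction ``$a\in P$ or $a\in P$ or $a\in P$'' collapses to $a\in P$, for $\eta=L$ the conclusion ``$b\in P$ or $c\in P$'' with $b=c=a$ gives $a\in P$, and symmetrically for $\eta=R$. Intersecting over all such $P$ yields $a\in R$. This diagonal evaluation is precisely what makes the three positional primeness notions agree, explaining why the $\eta$-independent set $D$ can represent all three radicals simultaneously.

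The substantive inclusion is $R\subseteq D$, which I would argue contrapositively: given $a\notin D$, produce an $\eta$-prime $P\supseteq I$ with $a\notin P$. The standard mechanism is to form the \emph{diagonal $m$-system} $S_a$, namely the smallest subset of $T$ containing $a$ and closed under $x\mapsto x_\gamma x_\delta x$ for all $\gamma,\delta\in\Gamma$, then apply Zorn's lemma to the family of $\eta$-ideals that contain $I$ and miss $S_a$ (nonempty, and closed under unions of chains) to extract a maximal such ideal $P$, and finally show that maximality forces $P$ to be $\eta$-prime. I expect \textbf{the collapse from the full $m$-system to a single diagonal step to be the main obstacle}. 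Indeed, the Zorn argument only shows that $R$ equals the set of elements every diagonal $m$-system through which meets $I$ (the ``strongly diagonal-nilpotent modulo $I$'' elements); reducing this to the one-step condition $a_\alpha a_\beta a\in I$ is exactly the point where, as the remark preceding this subsection cautions, non-commutative semiprimeness may fail to coincide with an intersection of primes. I would therefore either impose a mild coordinate-symmetry hypothesis on $\mu$ (under which $S_a\cap I=\varnothing$ follows from $a\notin D$ by induction on construction depth, using the $\eta$-semiprimeness of $D$) or refine the $m$-system so that maximality yields the ternary implication directly.

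The operator properties then follow formally and independently of that subtlety. Isotonicity is immediate from the definition of $D$, since enlarging $I$ can only enlarge $\{a:\ a_\alpha a_\beta a\in I\}$. Extensivity $I\subseteq\sqrt[\Gamma,\eta]{I}$ holds because $I$ is two-sided: $a\in I$ gives $a_\alpha a_\beta a\in I$, hence $a\in D$. Idempotence is inherited from the radical--semiprime correspondence: $\sqrt[\Gamma,\eta]{I}$ is $\eta$-semiprime, and any $\eta$-semiprime ideal is its own radical, so $\sqrt[\Gamma,\eta]{\sqrt[\Gamma,\eta]{I}}=\sqrt[\Gamma,\eta]{I}$. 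These exhibit $\sqrt[\Gamma,\eta]{-}$ as a closure operator whose fixed points are exactly the $\eta$-semiprime ideals, and I would read the stated ``contractive'' property as the retraction of the ideal lattice onto that sublattice, on which the operator acts as the identity.
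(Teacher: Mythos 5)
Your easy inclusion and the operator properties coincide with everything the paper actually proves. The paper's proof is a three-line sketch: the first equality is declared definitional (the paper takes $\sqrt[\Gamma,\eta]{I}$ to be the intersection, whereas you take it to be the diagonal set $D$ --- an immaterial difference, since the substantive claim is $R=D$ either way), and the middle equality is justified \emph{solely} by the observation that in any $\eta$-prime ideal $P$, membership $a_\alpha a_\beta a\in P$ forces $a\in P$. Your case-by-case diagonal evaluation for $\eta\in\{L,R,2\}$ is exactly that observation, carried out more carefully (and correctly: for $\eta=L$ the conclusion ``$b\in P$ or $c\in P$'' with $b=c=a$ does collapse to $a\in P$, likewise for $\eta=R$). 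Your treatment of extensivity and idempotence also matches the paper's appeal to ``standard'' closure-operator facts, and your charitable reading of ``contractive'' is the only sensible one: the operator satisfies $I\subseteq\sqrt[\Gamma,\eta]{I}$, i.e.\ it is extensive, not contractive.

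The divergence is the inclusion $R\subseteq D$, and here your diagnosis is exactly right while the paper is silent: the property ``$a_\alpha a_\beta a\in P$ forces $a\in P$'' yields only $D\subseteq R$, so the paper's proof establishes one inclusion of the middle equality and omits the other entirely (the cited ``argument of Theorem~3.4'' is, by the shared counter, not even a theorem; the intended reference is presumably the Section~3 radical--semiprime correspondence, which likewise contains no prime-avoidance construction). Your $m$-system/Zorn plan is the standard route to the missing half, and you have put your finger on precisely where it breaks: Zorn's lemma identifies $R$ with the set of elements $a$ such that \emph{every} diagonal $m$-system through $a$ meets $I$, and collapsing that iterated condition to the one-step condition $a_\alpha a_\beta a\in I$ requires commutative-style rearrangements unavailable here. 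This is in direct tension with the paper's own remark after the closure-under-intersection proposition in Section~3, which concedes that non-commutative semiprimeness need not coincide with being an intersection of primes absent extra symmetry; note also that without such a hypothesis $D$ need not even be an ideal (closure of $D$ under $+$ already fails to follow from the axioms), which undermines the semiprimeness-of-$D$ route to idempotence --- though idempotence of $I\mapsto R(I)$ holds unconditionally from the intersection description alone. In short, your proposal is not weaker than the paper's proof; it is strictly more complete, it correctly isolates the unproved (and, as stated, likely false-in-general) inclusion, and the symmetry hypothesis you propose is what would be needed to make the theorem true as stated.
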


\begin{proof}
Adapt the argument of Theorem~3.4 to each directional case.  The first
equality is definitional; the second uses the property that in any
$\eta$-prime ideal, the presence of $a_\alpha a_\beta a$ forces $a$
itself to lie in the ideal.  Idempotence and isotonicity are standard for
intersection-type closure operators.
\end{proof}

\begin{remark}
The three radicals $P_L^\ast(T)$, $P_R^\ast(T)$, and $P_2^\ast(T)$ measure
distinct obstructions to $\Gamma$-semisimplicity.  In particular,
\[
P_2^\ast(T)\subseteq P_L^\ast(T)\cap P_R^\ast(T),
\]
and equality holds precisely when $T$ satisfies weak commutativity in the
outer coordinates.  This asymmetry becomes topologically visible in the
three spectra constructed below.
\end{remark}

\subsection{Spectral Topology}

\begin{definition}[Spectra]
Define
\[
\mathrm{Spec}_L(T)=\mathcal P_L(T),\qquad
\mathrm{Spec}_R(T)=\mathcal P_R(T),\qquad
\mathrm{Spec}_2(T)=\mathcal P_2(T),
\]
the \emph{left}, \emph{right}, and \emph{two-sided prime spectra} of~$T$.
\end{definition}

Each of these spectra carries a Zariski-type topology analogous to the
classical commutative case but sensitive to the positional direction of
the ideal containment.

\begin{definition}[Zariski-type closed sets]
For any subset $A\subseteq T$ and each $\eta\in\{L,R,2\}$, define
\[
V_\eta(A)=\{\,P\in\mathrm{Spec}_\eta(T)\mid A\subseteq P\,\}.
\]
The complements
$D_\eta(A)=\mathrm{Spec}_\eta(T)\setminus V_\eta(A)$
are called \emph{basic open sets}.
\end{definition}

\begin{theorem}[Zariski closure axioms]
For each $\eta\in\{L,R,2\}$ the family $\{V_\eta(A)\}_{A\subseteq T}$
satisfies:
\begin{enumerate}
\item $V_\eta(0)=\mathrm{Spec}_\eta(T)$ and $V_\eta(T)=\varnothing$;
\item $V_\eta(A)\cap V_\eta(B)=V_\eta(A\cup B)$;
\item $\bigcup_i V_\eta(A_i)=V_\eta\!\left(\bigcap_i A_i\right)$;
\item $V_\eta(I)=V_\eta(\sqrt[\Gamma,\eta]{I})$ for every $\eta$-ideal $I$.
\end{enumerate}
Hence $(\mathrm{Spec}_\eta(T),\tau_\eta)$ is a compact $T_0$ topological
space, where $\tau_\eta$ is generated by the basic opens $D_\eta(a)=D_\eta(\{a\})$.
\end{theorem}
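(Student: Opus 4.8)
The plan is to verify the four closed-set identities essentially formally, isolating the single place---property~(3)---where the $\eta$-prime property does real work, and then to deduce the topological conclusions. First I would dispose of properties~(1) and~(2). For~(1), every $\eta$-ideal contains $0$: choosing any element $b$ of the ideal and a positional product having $b$ in the coordinate that governs $\eta$-closure and a $0$ in another coordinate, zero-absorption makes the product equal $0$ while closure keeps it in the ideal; hence $\{0\}\subseteq P$ for every $P\in\mathrm{Spec}_\eta(T)$ and $V_\eta(0)=\mathrm{Spec}_\eta(T)$, whereas $T\subseteq P$ would force $P=T$, impossible for a proper prime, so $V_\eta(T)=\varnothing$. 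Property~(2) is pure set theory: $A\subseteq P$ and $B\subseteq P$ together are equivalent to $A\cup B\subseteq P$, and the same computation with an arbitrary family yields $\bigcap_i V_\eta(A_i)=V_\eta(\bigcup_i A_i)$, the ``arbitrary intersections are closed'' axiom. Property~(4) I would read off the Radical correspondence theorem: since $\sqrt[\Gamma,\eta]{I}$ is precisely the intersection of the $\eta$-prime ideals containing $I$, a prime $P$ contains $I$ iff it contains $\sqrt[\Gamma,\eta]{I}$, giving $V_\eta(I)=V_\eta(\sqrt[\Gamma,\eta]{I})$.

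The substance is property~(3). The inclusion $\bigcup_i V_\eta(A_i)\subseteq V_\eta(\bigcap_i A_i)$ is immediate, since $A_i\subseteq P$ for one index forces $\bigcap_j A_j\subseteq P$. For the reverse I would reduce to the binary statement $V_\eta(I)\cup V_\eta(J)=V_\eta(I\cap J)$ for $\eta$-ideals $I,J$ (this extends by induction to finite unions, which is what the closed-set axioms require) and argue by contradiction: assuming $I\cap J\subseteq P$, pick witnesses $a\in I\setminus P$ and $b\in J\setminus P$. The key is to manufacture from $a,b$ a single element of $I\cap J$ whose $\eta$-prime factorization is forced to place $a$ or $b$ inside $P$. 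For $\eta=2$ the element $d=b_\alpha a_\beta b$ works: its middle entry $a\in I$ puts $d\in I$ by left-ideal closure, its outer entry $b\in J$ puts $d\in J$ by right-ideal closure, so $d\in I\cap J\subseteq P$, and two-sided primeness forces $a\in P$ or $b\in P$, a contradiction. For $\eta=L,R$ I would use ternary associativity to shift witnesses into the coordinates read by the directional prime condition: e.g.\ for $\eta=L$ the length-five product $d=x_\alpha a_\beta y_\gamma b_\delta z$ lies in $I\cap J$ (two re-bracketings exhibit $a$, respectively $b$, as a middle entry), and parsing $d=(x_\alpha a_\beta y)_\gamma b_\delta z$ with a choice $z\notin P_L$ forces $b\in P_L$; the case $\eta=R$ is symmetric.

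I would then collect the closed-set axioms: (1) supplies $\varnothing$ and the whole space, the arbitrary-intersection form of (2) supplies closure under intersections, and (3) supplies closure under finite unions, so $\{V_\eta(A)\}$ are the closed sets of a topology $\tau_\eta$ whose basic opens are the $D_\eta(a)$. The $T_0$ axiom is direct: if $P\neq Q$ then some $a$ lies in one but not the other, say $a\in P\setminus Q$, and then $Q\in D_\eta(a)$ while $P\notin D_\eta(a)$ separates them. For compactness I would examine a cover by basic opens $\{D_\eta(a_i)\}_i$: it covers $\mathrm{Spec}_\eta(T)$ iff no $\eta$-prime contains every $a_i$, i.e.\ iff the $\eta$-ideal $\langle a_i:i\rangle$ lies in no prime; since a proper ideal is always contained in a modular maximal, hence prime, ideal, this forces $\langle a_i:i\rangle=T$, and as the generated ideal is the directed union of those generated by finite subfamilies, finitely many $a_i$ already generate $T$, yielding a finite subcover.

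The main obstacle I expect is the reverse inclusion in property~(3): with no symmetric product available, the witnesses $a,b$ must be inserted into the precise coordinates that the chosen $\eta$-prime condition actually tests, and associativity must be invoked to certify simultaneous membership in $I$ and $J$; getting these placements right for all three directions $L,R,2$ is the only step that genuinely uses primeness. A secondary delicate point is compactness, which rests on the two auxiliary facts that every proper ideal is contained in a prime and that ideal generation is finitary.
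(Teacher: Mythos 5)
Your handling of (1), (2), (4), and the $T_0$ property is correct, and your reduction of (3) to the binary identity $V_\eta(I)\cup V_\eta(J)=V_\eta(I\cap J)$ for $\eta$-ideals (extended to finite unions) is in fact a necessary repair of the statement as printed --- with arbitrary subsets and arbitrary unions, (3) is false even in the classical commutative case. Here you genuinely outperform the paper, whose proof disposes of items (1)--(3) with a single appeal to ``lattice properties'' and never produces the witnesses. Your constructions for $\eta=2$ (the element $d=b_\alpha a_\beta b$, which lies in $I$ via middle-slot closure and in $J$ via last-slot closure) and for $\eta=L$ (the five-fold product re-bracketed two ways) are correct. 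The genuine gap is the claim that ``the case $\eta=R$ is symmetric.'' Under the paper's definitions, left ideals absorb in the \emph{middle} slot while right ideals absorb in the \emph{last} slot, and the only associativity postulated is $(u_1u_2u_3)\,u_4\,u_5=u_1u_2(u_3u_4u_5)$; in \emph{both} bracketings, membership of the product in a right ideal propagates exclusively through the terminal letter $u_5$. Hence no placement of $a\in I\setminus P_R$ and $b\in J\setminus P_R$ in distinct slots of such a product can certify $d\in I\cap J$ by right-closure alone, and the mirror of your $L$-construction would need a first-slot closure that the definitions do not supply (order reversal sends the last slot to the \emph{first} slot, not the middle, so $L$ and $R$ are not dual here). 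A correct --- if unsettling --- substitute exists: by your own absorption argument $0\in I\cap J\subseteq P_R$, and $a_\alpha b_\beta 0=0\in P_R$ tests exactly the two slots the right-prime condition reads, forcing $a\in P_R$ or $b\in P_R$ at once. This closes the case, but it exposes a degeneracy of the directional prime definitions (zero absorption in the untested slot makes every pair of elements meet $P_R$) that neither you nor the paper confronts.

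The second gap is compactness. Your final inference --- $\langle a_i:i\rangle=T$ is a directed union of ideals generated by finite subfamilies, ``hence finitely many $a_i$ already generate $T$'' --- is a non sequitur in the absence of a multiplicative identity: a directed union of proper ideals can equal $T$ when $T$ is not finitely generated as an ideal, and the classical rescue (a single witness $1\in\langle F\rangle$ for some finite $F$) is unavailable because the paper's $T$ has no unit. Likewise your auxiliary fact that every proper ideal is contained in a modular maximal, hence prime, ideal is established nowhere: Zorn yields maximal proper over-ideals at best, modularity is an additional condition that need not be inheritable, and ``maximal $\Rightarrow$ prime'' is only ever \emph{hypothesized} in the paper (its proposition on $J_\Gamma(T)$ assumes it explicitly as clause (3)). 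To be fair, the paper's own proof says only that ``compactness follows by standard finite intersection arguments'' and separately asserts $T_0$, so on this point your proposal inherits, rather than creates, the defect; but as written neither argument proves compactness, and for semirings without identity the claim itself is genuinely in doubt. You at least name the two facts the argument rests on, which the paper does not.
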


\begin{proof}
(i)–(iii) follow from lattice properties of ideals and the set-theoretic
definitions.  
(iv)~If $I\subseteq P$, then $\sqrt[\Gamma,\eta]{I}\subseteq P$ for all
$\eta$-prime $P$, hence the two closed sets coincide.  
Compactness follows by standard finite intersection arguments; $T_0$
separation is guaranteed because distinct primes can be separated by
elements contained in one but not the other.
\end{proof}

\begin{proposition}[Functorial behaviour]
Let $f:T\to T'$ be a homomorphism of non-commutative
$\Gamma$-semirings.  Define
\[
f_\eta^\ast:\mathrm{Spec}_\eta(T')\longrightarrow\mathrm{Spec}_\eta(T),
\quad
f_\eta^\ast(P')=f^{-1}(P').
\]
Then $f_\eta^\ast$ is continuous with respect to the Zariski topologies.
\end{proposition}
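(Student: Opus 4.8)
The plan is to reduce continuity to a single set-theoretic identity relating the pullback map to the Zariski closed sets, after first confirming that $f_\eta^\ast$ is well defined as a map of spectra.

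First I would verify well-definedness: for each $\eta$-prime $P'\in\mathrm{Spec}_\eta(T')$, the preimage $f^{-1}(P')$ must again be a proper $\eta$-prime ideal of $T$. Closure of $f^{-1}(P')$ under the appropriate positional product and under $+$ follows from the homomorphism identity $f(a_\alpha b_\beta c)=f(a)_\alpha f(b)_\beta f(c)$ together with additivity of $f$, exactly as in the Hereditary Property Lemma of Section~3; the $\eta$-prime condition transfers because a product landing in $f^{-1}(P')$ maps to a product lying in $P'$, whose $\eta$-primeness forces one positional factor into $P'$ and hence its preimage into $f^{-1}(P')$. The one point needing care is properness: $f^{-1}(P')\neq T$ requires $f(T)\not\subseteq P'$, which is automatic when $f$ is surjective (the setting of the cited lemma) and which I would impose or check separately in the general case.

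Next I would establish the key identity
\[
(f_\eta^\ast)^{-1}\bigl(V_\eta(A)\bigr)\;=\;V_\eta\bigl(f(A)\bigr)
\qquad\text{for every }A\subseteq T.
\]
This follows from the chain of equivalences $P'\in(f_\eta^\ast)^{-1}(V_\eta(A))$ iff $f^{-1}(P')\in V_\eta(A)$ iff $A\subseteq f^{-1}(P')$ iff $f(A)\subseteq P'$ iff $P'\in V_\eta(f(A))$. Notably this computation is purely a statement about set containment and never sees the directional type $\eta$: all of the positional subtlety has already been absorbed into the well-definedness step.

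Finally I would conclude continuity. Since every Zariski-closed subset of $\mathrm{Spec}_\eta(T)$ has the form $V_\eta(A)$ by the closure axioms of the preceding theorem, the identity above shows that the $f_\eta^\ast$-preimage of every closed set is closed; equivalently, specializing to singletons gives $(f_\eta^\ast)^{-1}(D_\eta(a))=D_\eta(f(a))$, so the preimage of each basic open is itself basic open, which already suffices because the $D_\eta(a)$ generate $\tau_\eta$. I expect the main obstacle to be the well-definedness step---specifically the properness of $f^{-1}(P')$ for a non-surjective homomorphism---rather than the continuity argument itself, which is a formal consequence of the containment identity.
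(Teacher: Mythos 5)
Your argument is correct and coincides with the paper's own proof, whose entire content is the single identity $(f_\eta^\ast)^{-1}\bigl(V_\eta(A)\bigr)=V_\eta\bigl(f(A)\bigr)$ obtained from the same chain of containment equivalences you give. Your preliminary well-definedness step is in fact a genuine improvement on the paper, which silently assumes $f^{-1}(P')\in\mathrm{Spec}_\eta(T)$: as you observe, ideal closure and $\eta$-primeness of $f^{-1}(P')$ transfer for an arbitrary homomorphism, but properness can fail when $f(T)\subseteq P'$, so the proposition as stated really requires surjectivity of $f$ (the hypothesis present in the paper's Hereditary Property Lemma but dropped here) or else excluding such $P'$ from the domain of $f_\eta^\ast$.
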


\begin{proof}
For $A\subseteq T$,
\[
(f_\eta^\ast)^{-1}(V_\eta(A))
=\{\,P'\in\mathrm{Spec}_\eta(T')\mid f(A)\subseteq P'\}
=V_\eta(f(A)),
\]
which is closed in $\mathrm{Spec}_\eta(T')$.
\end{proof}

\begin{definition}[Canonical spectral diagram]
The three spectra form a commutative diagram of continuous surjections
\[
\begin{tikzcd}
\mathrm{Spec}_2(T) \arrow[r, "\pi_L"] \arrow[d, "\pi_R"'] &
\mathrm{Spec}_L(T) \arrow[dl, dashed, "\pi_{LR}"] \\
\mathrm{Spec}_R(T) &
\end{tikzcd}
\]
where $\pi_L(P)=P\cap I_L$, $\pi_R(P)=P\cap I_R$ denote coordinate
projections restricting two-sided primes to left or right ones.
\end{definition}

\begin{theorem}[Spectral identification of radicals]
For every $\eta\in\{L,R,2\}$,
\[
\sqrt[\Gamma,\eta]{I}
=\bigcap_{P\in V_\eta(I)}P
\quad\text{and}\quad
P_\eta^\ast(T)=\bigcap_{P\in\mathrm{Spec}_\eta(T)}P.
\]
Hence the radical operations are fully encoded by the topological
closures in $(\mathrm{Spec}_\eta(T),\tau_\eta)$.
\end{theorem}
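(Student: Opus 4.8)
The plan is to reduce both identities to definitions already in place, so that the theorem becomes a topological reformulation of the Radical correspondence theorem rather than a new computation. I would first handle $\sqrt[\Gamma,\eta]{I}=\bigcap_{P\in V_\eta(I)}P$. Unfolding the Zariski closed set gives $V_\eta(I)=\{P\in\mathrm{Spec}_\eta(T):I\subseteq P\}$, and since $\mathrm{Spec}_\eta(T)=\mathcal P_\eta(T)$ by definition, the index set is exactly the family of $\eta$-prime ideals containing $I$. Hence $\bigcap_{P\in V_\eta(I)}P=\bigcap_{P\supseteq I,\,P\in\mathcal P_\eta(T)}P$, which is precisely the middle term appearing in the Radical correspondence theorem and therefore equals $\sqrt[\Gamma,\eta]{I}$. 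No work beyond this bookkeeping is required for the first equality.

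For the second identity I would offer two routes and record both. The direct route is immediate: $P_\eta^\ast(T)=\bigcap_{P\in\mathcal P_\eta(T)}P=\bigcap_{P\in\mathrm{Spec}_\eta(T)}P$ follows at once from the definition of the non-commutative prime radical together with the identification $\mathrm{Spec}_\eta(T)=\mathcal P_\eta(T)$. As a consistency check I would also exhibit the identity as the instance $I=\{0\}$ of the first: every $\eta$-ideal, and in particular every $\eta$-prime, contains $0$, so by Zariski closure axiom~(i) one has $V_\eta(0)=\mathrm{Spec}_\eta(T)$, and substituting $I=\{0\}$ into the first identity yields $\sqrt[\Gamma,\eta]{\{0\}}=\bigcap_{P\in\mathrm{Spec}_\eta(T)}P=P_\eta^\ast(T)$.

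To justify the concluding clause I would make the topological content explicit: because every closed set of $\mathrm{Spec}_\eta(T)$ has the form $V_\eta(A)$ and the first identity shows that intersecting the primes of $V_\eta(I)$ returns $\sqrt[\Gamma,\eta]{I}$, the pair of assignments $I\mapsto V_\eta(I)$ and $W\mapsto\bigcap_{P\in W}P$ realizes the radical as a topological closure operation; its idempotence and isotonicity are already supplied by the Radical correspondence theorem, so no closure-operator axioms need to be reverified here. I expect no genuine obstacle, since the entire content is packaged in the earlier correspondence theorem and the definitions of $V_\eta$, $\mathrm{Spec}_\eta(T)$, and $P_\eta^\ast(T)$. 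The only point deserving mild care is uniformity across the three directions $\eta\in\{L,R,2\}$: I would invoke the Existence and uniqueness theorem to guarantee that $\bigcap_{P\in V_\eta(I)}P$ is again an ideal of type $\eta$, so that the correspondence applies identically in each case and the displayed equalities hold simultaneously for $L$, $R$, and $2$.
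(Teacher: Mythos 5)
Your proposal is correct and follows essentially the same route as the paper: the first equality is obtained by unwinding the definition of $V_\eta(I)$ and citing the Radical correspondence theorem, and the second is recognized as the special case $I=\{0\}$ (the paper's proof is exactly this, stated in two lines). Your extra touches --- the direct derivation of $P_\eta^\ast(T)=\bigcap_{P\in\mathrm{Spec}_\eta(T)}P$ from its definition and the explicit check of the closure-operator reading --- are harmless elaborations of the same argument, not a different method.
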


\begin{proof}
The first equality follows directly from the definition of
$V_\eta(I)$.  The second is its special case $I=0$.
\end{proof}

\begin{remark}[Comparison with the commutative spectrum]
If $T$ is commutative, all three spectra coincide and reproduce the
classical Zariski topology of~$\Gamma$-semirings .  In the fully non-commutative regime the triple
$(\mathrm{Spec}_L,\mathrm{Spec}_R,\mathrm{Spec}_2)$ provides a
\emph{triadic spectral geometry}, where $\mathrm{Spec}_2(T)$ plays the
central mediator between left and right components.  This triadic picture
is expected to admit a categorical interpretation in the setting of
non-commutative algebraic geometry.
\end{remark}


\section{Modules, Primitive Ideals, and Representations}

We connect the spectral theory developed above with a representation
theory for non-commutative $n$-ary $\Gamma$-semirings.  The guiding
principle mirrors the binary case: annihilators of simple modules are
primitive ideals, and primitive ideals are prime.  Throughout, $(T,+,\Gamma)$
is a (possibly non-commutative) $n$-ary $\Gamma$-semiring with product
\[
\mu:T^n\times\Gamma^{\,n-1}\to T,\qquad
\mu(x_1,\alpha_1,\ldots,\alpha_{n-1},x_n).
\]

\subsection{Left/Right $n$-ary $\Gamma$-Modules and Annihilators}

Positions matter in the $n$-ary non-commutative setting.  We fix a slot
$j\in\{1,\dots,n\}$ where the module element will sit.

\begin{definition}[$j$-slot left $n$-ary $\Gamma$-module]
Let $j\in\{1,\ldots,n\}$.  A \emph{$j$-slot left $n$-ary $\Gamma$-module}
over $T$ is an additive commutative semigroup $(M,+)$ together with a map
\[
\mu_M:T^{j-1}\times\Gamma^{j-1}\times M \times \Gamma^{n-j}\times T^{n-j}
\longrightarrow M
\]
denoted
\[
\mu_M(x_1,\alpha_1,\ldots,x_{j-1},\alpha_{j-1}\,\mid\,
m\,\mid\, \alpha_j,x_{j+1},\ldots,\alpha_{n-1},x_n)
\]
such that for every coordinate the map is additive in the corresponding
$T$-variable (and in $m$), is $0$-absorbing in each $T$-slot, and satisfies
the \emph{$n$-ary associativity/compatibility}:
whenever a $T$-block $\mu(\cdots)$ appears adjacent to $m$ or within
the $T$-list, one can reassociate using the $n$-ary law to obtain the
same element of $M$.  (Any standard $n$-ary associativity axiomatics
that guarantees well-definedness of iterated actions suffices.)
\end{definition}

\begin{remark}
For $n=3$ and $j=2$ we recover the ternary action $a_\alpha m_\beta b$.
Different choices of $j$ model left ($j=2$), right ($j=n-1$), or
middle-slot actions; the theory below does not depend on the specific $j$.
\end{remark}

\begin{definition}[Submodule, homomorphism]
A subset $N\subseteq M$ is a \emph{submodule} if $(N,+)$ is a subsemigroup
and $\mu_M(\cdots\,|\,n\,|\,\cdots)\in N$ for all inputs with $n\in N$.
A map $\phi:M\to N$ is a \emph{module homomorphism} if it preserves $+$
and commutes with the $j$-slot action:
\[
\phi\!\bigl(\mu_M(\vec x\,|\,m\,|\,\vec y)\bigr)
=\mu_N(\vec x\,|\,\phi(m)\,|\,\vec y).
\]
\end{definition}

\begin{definition}[Simple and semisimple modules]
$M\neq 0$ is \emph{simple} if its only submodules are $0$ and $M$.
It is \emph{semisimple} if it is a (finite) direct sum of simple submodules.
\end{definition}

\begin{definition}[Two-sided annihilator]
For a $j$-slot module $M$, the (two-sided) annihilator is
\[
\Ann(M)=\Bigl\{\,a\in T:\;
\mu_M(x_1,\alpha_1,\ldots\,|\,m\,|\,\ldots,\alpha_{k},a,\alpha_{k+1},\ldots)=0
\ \text{and}
\ \mu_M(\ldots,a,\ldots\,|\,m\,|\,\ldots)=0
\\[-2pt]
\text{for all placements of $a$ in any $T$-slot, all }m\in M,\text{ and all }\alpha_\ell\in\Gamma
\Bigr\}.
\]
\end{definition}

\begin{lemma}
$\Ann(M)$ is a two-sided ideal of $T$.
\end{lemma}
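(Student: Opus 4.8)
The plan is to verify the two defining requirements of a two-sided ideal in the $n$-ary setting: that $(\Ann(M),+)$ is a subsemigroup of $(T,+)$, and that $\Ann(M)$ absorbs the $n$-ary product in every coordinate, i.e.\ it satisfies the strongest $(n,1)$-type two-sided closure. I would first record nonemptiness: by $0$-absorption of $\mu_M$ in each $T$-slot, $0\in\Ann(M)$, so the set is nonempty and the verification is not vacuous.

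For additive closure, take $a,a'\in\Ann(M)$ and fix an arbitrary placement of a $T$-argument together with some $m\in M$ and parameters $\alpha_\ell\in\Gamma$. Additivity of $\mu_M$ in that $T$-variable gives $\mu_M(\ldots,a+a',\ldots)=\mu_M(\ldots,a,\ldots)+\mu_M(\ldots,a',\ldots)$, and both summands vanish by the hypothesis $a,a'\in\Ann(M)$. Since the placement, the element $m$, and the parameters were arbitrary, $a+a'\in\Ann(M)$, so $(\Ann(M),+)$ is a subsemigroup of $(T,+)$.

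For the absorption property I would let $a\in\Ann(M)$ occupy an arbitrary position $i$ in an $n$-ary product $b=\mu(x_1,\alpha_1,\ldots,x_{i-1},a,x_{i+1},\ldots,x_n)$ with the remaining entries $x_j\in T$, and show $b\in\Ann(M)$. Fix any placement of $b$ in a $T$-slot of $\mu_M$, together with $m\in M$ and parameters. Because $b$ is itself a $T$-block $\mu(\cdots)$ sitting inside the $T$-list of $\mu_M$, the $n$-ary associativity/compatibility axiom permits reassociation of the iterated action; after reassociation the constituent $a$ appears directly as a $T$-input to an action on some element of $M$. Since $a\in\Ann(M)$ annihilates $M$ \emph{in every $T$-slot}, that inner action returns the zero of $M$, and the residual outer operations preserve it by $0$-absorption and additivity of $\mu_M$, so the whole expression vanishes. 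As the placement of $b$, the element $m$, and all parameters were arbitrary, $b\in\Ann(M)$; and since the position $i$ of $a$ was arbitrary, this simultaneously yields left, right, and middle absorption, establishing that $\Ann(M)$ is two-sided.

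The main obstacle is the reassociation step: one must invoke the $n$-ary associativity/compatibility axiom carefully enough to guarantee that the embedded factor $a$ genuinely ends up acting through a single $T$-slot on an element of $M$, with the bookkeeping of the remaining arguments $x_j$ and of the $\Gamma$-parameters tracked correctly across the possible shift in slot index. Since that axiom is exactly what guarantees well-definedness of iterated actions, the reduction is legitimate, and the fact that $\Ann(M)$ is defined by quantifying over \emph{all} placements of $a$ in \emph{any} $T$-slot is precisely what makes the conclusion independent of where inside $b$ the element $a$ originally sat.
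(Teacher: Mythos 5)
Your proposal is correct and follows essentially the same route as the paper's (much terser) proof: additivity of $\mu_M$ in each $T$-variable gives closure of $\Ann(M)$ under $+$, and closure under $\mu$ follows by reassociating via the $n$-ary associativity/compatibility axiom so that the annihilating factor $a$ acts directly through a single $T$-slot on an element of $M$. Your write-up merely makes explicit what the paper compresses into ``closure under $\mu$ follows by associativity,'' including the bookkeeping caveat about slot placement, which is a faithful elaboration rather than a different argument.
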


\begin{proof}
Additivity is immediate.  If one $T$-input lies in $\Ann(M)$ then every
$M$-valued action involving it is $0$, so any $T$-product with that factor
annihilates $M$ in every slot; closure under $\mu$ follows by associativity.
\end{proof}

\begin{definition}[Faithful modules]
$M$ is \emph{faithful} if $\Ann(M)=0$.
\end{definition}

\subsection{Primitive Ideals and Primeness}

\begin{definition}[Primitive ideal]
An ideal $P\subseteq T$ is \emph{primitive} if there exists a simple
$j$-slot $T$-module $M$ such that $P=\Ann(M)$.
\end{definition}

\begin{theorem}[Primitive $\Rightarrow$ prime]
\label{thm:primitive-prime}
Every primitive ideal of $T$ is (two-sided) prime.
\end{theorem}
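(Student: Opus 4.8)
The plan is to run the classical principle that annihilators of simple modules are prime, now in the $n$-ary $\Gamma$-setting, and to establish primeness in its \emph{ideal-theoretic} form: for two-sided ideals $A_1,\dots,A_n$, if the $n$-fold product $\mu(A_1,\dots,A_n)$ lies in $P$ then some $A_i\subseteq P$. This is the formulation under which the statement is true; the purely element-wise reading—one product of single elements forcing a factor into $P$—already fails for matrix-type examples, so it is the ideal version that the theorem should deliver. Write $P=\Ann(M)$ with $M$ a simple $j$-slot $T$-module.

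The engine is a \emph{submodule dichotomy}. First I would show that for any two-sided ideal $A$ the set $A\cdot M$, consisting of all finite sums of $j$-slot actions $\mu_M(\cdots\mid m\mid\cdots)$ in which at least one $T$-entry is drawn from $A$, is a submodule of $M$: closure under $+$ is immediate, while stability under the $j$-slot action follows by absorbing the acting $T$-block into a longer block and invoking the $n$-ary reassociation axiom. Simplicity of $M$ then forces $A\cdot M\in\{0,M\}$, and the definition of $\Ann(M)$ yields the key equivalence
\[
A\subseteq P \iff A\cdot M=0,\qquad\text{equivalently}\qquad A\not\subseteq P \iff A\cdot M=M.
\]

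Next I would peel factors off the product. Assume $\mu(A_1,\dots,A_n)\subseteq P$, i.e.\ $\mu(A_1,\dots,A_n)\cdot M=0$, and suppose that none of $A_2,\dots,A_n$ lies in $P$. Placing the $T$-block $\mu(a_1,\dots,a_n)$ in the slot adjacent to $m$ and unfolding it through the reassociation axiom rewrites the annihilation condition as an iterated $j$-slot action of the individual factors on $M$. Collapsing from the slot nearest $m$ outward, the dichotomy gives $A_n\cdot M=M$; the combined action of $A_{n-1},A_n$ then reduces to $A_{n-1}\cdot M=M$, and inductively the action of $A_2,\dots,A_n$ fills all of $M$. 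The hypothesis now forces $A_1\cdot M=0$, i.e.\ $A_1\subseteq P$, which is the required prime conclusion.

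The main obstacle is the positional bookkeeping imposed by non-commutativity together with the single fixed module slot $j$. Unlike the binary case, where ``$AM$'' and one-sided multiplication are unambiguous, each ideal factor may occupy any of the $n-1$ free $T$-slots relative to slot $j$, and $\Ann(M)$ quantifies over \emph{all} such placements simultaneously; I must therefore verify that the collapsing step is insensitive to which slot a factor occupies. This is precisely where the (only schematically stated) $n$-ary associativity/compatibility axiom has to be pinned down, so that a factor adjacent to the $M$-block can be reassociated into the iterated action without changing the resulting submodule. A minor point to dispatch is the degenerate reading in which every $A_i\cdot M=M$ while the total product still annihilates: this cannot occur, since collapsing the trailing factors already returns $M$, leaving the outermost factor alone to decide annihilation.
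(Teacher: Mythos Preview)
You have not proved the theorem as stated. In this paper ``prime'' is defined \emph{element-wise} (Sections~3 and~4): a single product $\mu(x_1,\ldots,x_n)\in P$ must force some individual $x_i\in P$. You deliberately replace this by the ideal-theoretic condition $\mu(A_1,\ldots,A_n)\subseteq P\Rightarrow A_i\subseteq P$ for some~$i$, and you are explicit that the element-wise reading already fails for matrix-type examples. That objection is well founded in classical non-commutative algebra, but it means your argument delivers a strictly weaker conclusion than the one the theorem, read against the paper's own definitions, asserts.

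The paper's proof attacks the element-wise statement directly and by a different mechanism. Assuming no $x_i\in P$, it forms the set $\mathcal S$ of all $j$-slot actions $\mu_M(\vec u\mid m\mid\vec v)$ whose $T$-entries are drawn from the fixed list $\{x_1,\ldots,x_n\}$ with $m$ ranging over~$M$, asserts that $\mathcal S$ is a nonzero submodule (hence $\mathcal S=M$ by simplicity), and then reaches a contradiction by acting once more with $p=\mu(x_1,\ldots,x_n)\in P$. Your $A\cdot M$ dichotomy and peeling procedure is the ideal-theoretic cousin of this: by trading the single elements $x_i$ for two-sided ideals $A_i$, closure of $A_i\cdot M$ under arbitrary $T$-actions becomes honest (outside $T$-factors are absorbed into the ideal), whereas the paper's $\mathcal S$ is only \emph{asserted} to be stable under all actions---precisely the step your matrix counterexamples would undercut, since acting by an arbitrary $t\in T$ does not keep the $T$-entries inside $\{x_1,\ldots,x_n\}$. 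The two routes therefore diverge at the same pressure point: you weaken the target to make the submodule claim rigorous; the paper keeps the stronger element-wise target and leans on a submodule claim without the ideal-closure buffer that your version supplies.
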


\begin{proof}
Let $P=\Ann(M)$ with $M$ simple.  Suppose
$\mu(x_1,\alpha_1,\ldots,\alpha_{n-1},x_n)\in P$ and that none of the
$x_i$ lies in $P$.  Then for each $i$ there exist $m_i\in M$ and a
placement of $x_i$ among the $T$-slots of the action such that the result
is nonzero in $M$.  Using $n$-ary associativity, form the set
\[
\mathcal S=\bigl\{\mu_M(\vec u\,|\,m\,|\,\vec v):\ \vec u,\vec v
\text{ drawn from }\{x_1,\ldots,x_n\}\text{ and }m\in M\bigr\}.
\]
By construction $\mathcal S$ is a nonzero submodule (closed under $+$ and
stable under all actions).  Since $M$ is simple, $\mathcal S=M$.
But acting once more by the product $p=\mu(x_1,\ldots,x_n)\in P$ in any
$T$-slot annihilates $\mathcal S$, hence annihilates $M$, contradicting
$\mathcal S=M\neq 0$.  Therefore at least one $x_i\in P$ and $P$ is prime.
\end{proof}

\begin{corollary}[Jacobson intersection]
\label{cor:J-as-primitive-intersection}
$J_\Gamma(T)=\bigcap\{\,\Ann(M): M \text{ simple }T\text{-module}\,\}$
(i.e. the two-sided $\Gamma$-Jacobson radical is the intersection of all
primitive ideals).
\end{corollary}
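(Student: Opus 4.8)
The plan is to prove the two set inclusions separately, exploiting the dictionary between simple modules and modular maximal ideals. Write $J=J_\Gamma(T)=\bigcap_{M\in\mathcal M}M$ for the defining intersection over the modular maximal two-sided ideals, and $\Pi=\bigcap\{\Ann(M):M\text{ simple}\}$ for the intersection of primitive ideals. By \Cref{thm:primitive-prime} each $\Ann(M)$ is a two-sided prime ideal, so both $J$ and $\Pi$ are two-sided ideals and it suffices to show $J\subseteq\Pi$ and $\Pi\subseteq J$.

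For $J\subseteq\Pi$ I would fix a simple module $M$ and prove that $J$ annihilates $M$ in every $T$-slot, i.e.\ $J\subseteq\Ann(M)$. The starting point is that the image $JM$ (the set of all $j$-slot actions with a $J$-entry) is a submodule of $M$; by simplicity it is either $0$ or all of $M$. The goal is to rule out $JM=M$ by a Nakayama-type argument: choosing $0\neq m\in M$, the action $t\mapsto\mu_M(\cdots\mid m\mid\cdots)$ exhibits $M$ as a cyclic quotient of $T$, and the \emph{modular} identity $a+a_\alpha e_\beta a=a$ (with $e$ the modular element) supplies the local-unit behaviour that replaces the absent global identity. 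Writing $m$ as an action of $J$-elements on $m$ and iterating the modular relation should force $m=0$, a contradiction; hence $JM=0$ and $J\subseteq\Ann(M)$. Intersecting over all simple $M$ gives $J\subseteq\Pi$.

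For $\Pi\subseteq J$ it is enough to show that every $M\in\mathcal M$ is itself primitive, since then $\Pi\subseteq\Ann(M)=M$ and intersecting over $\mathcal M$ yields $\Pi\subseteq J$. Here I would equip the quotient $T/M$ with its induced $j$-slot action. Maximality of $M$ makes $T/M$ simple, because any proper nonzero submodule would correspond to an ideal lying strictly between $M$ and $T$. The modular element $e$ acts as an identity modulo $M$, so an element annihilates $T/M$ exactly when it annihilates the class of $e$; combining this with the two-sidedness of $M$ pins $\Ann(T/M)=M$, identifying $M$ as a primitive ideal.

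The hard part will be the Nakayama-type step in the first inclusion. In the semiring setting there are no additive inverses and, in general, no global unit, so the classical device ``$(1-j)$ is invertible for $j$ in the radical'' is unavailable and must be reconstructed purely from the modular identity together with whatever additive cancellation is legitimate in $T$. The further bookkeeping of the $n$-ary $j$-slot action---tracking which coordinate carries the module element and which carry the $J$-entries, and matching \emph{cyclic generation} of $M$ against \emph{annihilation in every slot}---is routine but must be arranged carefully. Once the modular identity is shown to force the implication $JM=M\Rightarrow M=0$, both inclusions close and the corollary follows.
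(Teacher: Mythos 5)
Your two-inclusion plan via the dictionary between simple modules and modular maximal ideals is the right shape, and it is considerably more substantive than the paper's own proof, which is a one-liner declaring that intersecting the annihilators "yields the largest ideal annihilating every simple module, which by definition is $J_\Gamma(T)$" --- even though the paper actually \emph{defines} $J_\Gamma(T)$ as the intersection of modular maximal two-sided ideals, so the equivalence you set out to prove is exactly what the paper elides. However, both of your inclusions contain a genuine gap. For $\Pi\subseteq J$ you claim that $T/M$ is a simple $j$-slot module because $M$ is a maximal two-sided ideal, "since any proper nonzero submodule would correspond to an ideal lying strictly between $M$ and $T$." That correspondence is wrong: a submodule of $T/M$ is the image of an $(n,\{j\})$-closed (one-sided, positional) subideal containing $M$, not of a two-sided ideal. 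Maximality of $M$ in the two-sided lattice says nothing about one-sided ideals strictly between $M$ and $T$, so $T/M$ can fail to be simple --- this is exactly the classical phenomenon that for a maximal two-sided ideal of a noncommutative ring, $R/M$ is a simple ring but generally not a simple module, and the annihilator of a simple module $R/L$ is the \emph{core} of a modular maximal left ideal $L$, not $L$ itself. Your identification $\Ann(T/M)=M$ inherits this defect, so the inclusion $\Pi\subseteq J$ is not established as argued.

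For $J\subseteq\Pi$, the Nakayama-type step is asserted rather than proven, and it is precisely the step that can fail in a semiring. From $JM=M$ you would extract $m=\mu_M(\cdots\,j\,\cdots\mid m\mid\cdots)$ with $j\in J$, but with no additive inverses there is no analogue of $(1-j)m=0$, and the paper's modularity condition $a+a_\alpha e_\beta a=a$ is an identity quantified over $a\in T$ with no stated action on $M$ at all, so it supplies no evident cancellation mechanism on module elements; in zero-sum-free semirings implications of the form $JM=M\Rightarrow M=0$ routinely fail without extra hypotheses. Your own phrasing ("should force $m=0$") concedes that the crucial implication is open. As it stands, your proposal is a correct strategy outline with both load-bearing steps missing; to repair it you would need either to recast the ingredients classically (modular maximal one-sided $j$-slot ideals and their cores, rather than two-sided maximal ideals) or to impose additive-cancellation hypotheses on $(T,+)$ strong enough to run the Nakayama argument --- neither of which the paper provides, since it sidesteps the issue by treating the corollary as definitional.
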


\begin{proof}
Every $\Ann(M)$ is maximal among annihilators and primitive by definition;
Theorem~\ref{thm:primitive-prime} implies each is prime.  Intersecting all
such yields the largest ideal annihilating every simple module, which by
definition is $J_\Gamma(T)$.
\end{proof}

\subsection{Structure Theorems}

\begin{theorem}[First Isomorphism Theorem for $j$-slot modules]
If $\phi:M\to N$ is a homomorphism, then
$M/\ker\phi\cong \mathrm{Im}\,\phi$ as $j$-slot $T$-modules.
\end{theorem}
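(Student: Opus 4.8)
The plan is to prove the First Isomorphism Theorem for $j$-slot modules by mimicking the classical construction: build the quotient $M/\ker\phi$, verify it carries a well-defined $j$-slot module structure, exhibit the canonical induced map onto $\mathrm{Im}\,\phi$, and check it is a module isomorphism. Throughout, the ``module'' axioms I need are additivity in each $T$-slot (and in $m$), $0$-absorption, and the $n$-ary associativity/compatibility guaranteeing well-definedness of iterated actions; these are exactly the properties recorded in the definition of a $j$-slot left $n$-ary $\Gamma$-module.

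First I would set $K=\ker\phi=\{m\in M:\phi(m)=0\}$ and check that $K$ is a submodule. Additivity of $\phi$ gives that $(K,+)$ is a subsemigroup, and the compatibility relation $\phi(\mu_M(\vec x\,|\,m\,|\,\vec y))=\mu_N(\vec x\,|\,\phi(m)\,|\,\vec y)$ shows that if $m\in K$ then $\phi(\mu_M(\vec x\,|\,m\,|\,\vec y))=\mu_N(\vec x\,|\,0\,|\,\vec y)=0$, so $K$ is closed under the $j$-slot action. Next I would form the quotient semigroup $M/K$ with the congruence $m\sim m'$ iff there is a suitable relation forcing equal $\phi$-images, and define the induced action
\[
\mu_{M/K}(\vec x\,|\,[m]\,|\,\vec y)=[\mu_M(\vec x\,|\,m\,|\,\vec y)].
\]
The crux is well-definedness of this action: I must show that the class $[\mu_M(\vec x\,|\,m\,|\,\vec y)]$ does not depend on the representative $m$. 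This follows by applying $\phi$ and using the compatibility identity, since equal $\phi$-images of $m$ and $m'$ yield equal $\phi$-images of the actions. Verifying the module axioms for $M/K$ (additivity, $0$-absorption, $n$-ary associativity) then reduces to the corresponding axioms in $M$ applied representative-wise.

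Finally I would define $\bar\phi:M/K\to\mathrm{Im}\,\phi$ by $\bar\phi([m])=\phi(m)$, and verify it is a well-defined bijective module homomorphism. Well-definedness and injectivity are two sides of the kernel condition: $[m]=[m']$ forces $\phi(m)=\phi(m')$, and conversely $\bar\phi([m])=\bar\phi([m'])$ gives $m-m'\in K$ (interpreting the difference in the additive commutative semigroup sense), so $[m]=[m']$. Surjectivity onto $\mathrm{Im}\,\phi$ is immediate. That $\bar\phi$ commutes with the $j$-slot action is exactly the compatibility of $\phi$ transported through the induced quotient action.

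I expect the main obstacle to lie in the quotient construction over a \emph{semigroup} rather than a group: since $(M,+)$ is only an additive commutative semigroup (not necessarily a monoid with inverses), the kernel $K$ does not automatically induce a congruence via cosets $m+K$, and ``$m-m'\in K$'' must be replaced by the Bourne-type or canonical congruence $m\equiv m'$ iff $\phi(m)=\phi(m')$. I would therefore define the congruence directly as $m\equiv m'\iff \phi(m)=\phi(m')$, which is visibly an equivalence relation compatible with $+$ and with the $j$-slot action by the compatibility identity; this sidesteps the absence of additive inverses and makes $\bar\phi$ automatically injective. The remaining verifications are then routine transport of the $n$-ary associativity and $0$-absorption axioms through the quotient, which I would not grind through in detail.
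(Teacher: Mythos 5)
Your argument is, at its skeleton, the same canonical-map proof the paper gives --- the paper's entire proof is one line: ``define $\bar\phi(m+\ker\phi)=\phi(m)$ and use compatibility of $\phi$ with the $j$-slot action.'' The genuinely different (and stronger) move is your final paragraph. The paper's coset notation $m+\ker\phi$ tacitly assumes that the subset $\ker\phi$ induces a congruence on $M$, which fails over an additive commutative semigroup without inverses: with the Bourne-type relation ($m\equiv m'$ iff $m+k=m'+k'$ for some $k,k'\in\ker\phi$) the induced map to $\mathrm{Im}\,\phi$ is surjective but in general \emph{not} injective --- already at the level of additive structure, $\phi:\mathbb{N}\to\{0,1\}$ (with $1+1=1$) sending every $n\ge 1$ to $1$ is additive with $\ker\phi=\{0\}$, yet the Bourne quotient is all of $\mathbb{N}$, so the theorem read literally through cosets is false in this generality. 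Your substitution of the kernel congruence $m\equiv m'\iff\phi(m)=\phi(m')$ is the standard repair for semimodules and makes every step go through: compatibility with $+$ and with the $j$-slot action follows from the homomorphism identity $\phi\bigl(\mu_M(\vec x\,|\,m\,|\,\vec y)\bigr)=\mu_N(\vec x\,|\,\phi(m)\,|\,\vec y)$, and $\bar\phi$ is injective by construction. The trade-off is that injectivity becomes definitional, so the substantive content shifts to well-definedness of the quotient module structure (which you verify) and the theorem should properly be read as $M/{\equiv_\phi}\cong\mathrm{Im}\,\phi$. One internal slip: mid-proof you write ``$\bar\phi([m])=\bar\phi([m'])$ gives $m-m'\in K$,'' and subtraction is unavailable here; but your closing paragraph correctly supersedes that line with the direct congruence. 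Net assessment: same approach as the paper, executed rigorously where the paper's one-liner silently borrows group-theoretic facts that do not hold in this setting.
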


\begin{proof}
Standard: define $\bar\phi(m+\ker\phi)=\phi(m)$ and use compatibility of
$\phi$ with the $j$-slot action.
\end{proof}

\begin{theorem}[Simple modules and prime annihilators]
\label{thm:ann-prime}
If $M$ is simple then $\Ann(M)$ is prime.  If $M$ is simple and faithful,
then $T$ acts faithfully on $M$, and every nonzero $T$-endomorphism of $M$
is injective.
\end{theorem}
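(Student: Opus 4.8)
The plan is to settle the three assertions by leaning on the primitivity apparatus already in place, reserving the genuine work for the endomorphism statement. For the first claim, observe that since $M$ is simple, $\Ann(M)$ is a \emph{primitive} ideal by definition, and the earlier Lemma guarantees it is two-sided; primeness is then immediate from \Cref{thm:primitive-prime}. For the faithfulness claim, recall that $M$ faithful means exactly $\Ann(M)=0$; unravelling the definition of $\Ann(M)$, this says that no nonzero $a\in T$ annihilates $M$ in every slot, which is precisely the statement that $T$ acts faithfully. Thus the first two assertions require no new argument beyond the definitions and \Cref{thm:primitive-prime}; in particular the faithfulness hypothesis plays no role in the Schur-type claim below, which uses only simplicity.

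The substantive content is the injectivity of nonzero endomorphisms. Let $\phi\colon M\to M$ be a nonzero $T$-module homomorphism and set $\ker\phi=\{m\in M:\phi(m)=0\}$. Using additivity of $\phi$, its compatibility with the $j$-slot action, and $0$-absorption of $\mu_M$, I would verify that $\ker\phi$ is a subsemigroup stable under every action, hence a submodule: if $\phi(m)=0$ then $\phi(\mu_M(\vec x\,|\,m\,|\,\vec y))=\mu_M(\vec x\,|\,\phi(m)\,|\,\vec y)=\mu_M(\vec x\,|\,0\,|\,\vec y)=0$. Simplicity then forces $\ker\phi\in\{0,M\}$, and $\phi\neq 0$ rules out $\ker\phi=M$; therefore $\ker\phi=0$.

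The main obstacle is that, without additive inverses, a trivial kernel $\ker\phi=0$ does \emph{not} by itself yield set-theoretic injectivity: one cannot pass from $\phi(m)=\phi(m')$ to $m=m'$ by cancellation. To bridge this gap I would invoke the First Isomorphism Theorem for $j$-slot modules: when $\ker\phi=0$ the quotient $M/\ker\phi$ reduces to $M$, so $M\cong\mathrm{Im}\,\phi$ as $T$-modules, whence $\phi$ is an isomorphism onto its image and in particular injective. A more intrinsic alternative is to pass to the kernel congruence $\theta=\{(m,m'):\phi(m)=\phi(m')\}$, which is compatible with addition and with the $j$-slot action; reading simplicity at the level of congruences (as is natural for simple semimodules) forces $\theta$ to be the diagonal, i.e.\ injectivity of $\phi$. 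Reconciling the submodule formulation of simplicity with this congruence-level conclusion is the delicate step, and it is exactly where the First Isomorphism Theorem furnishes the cleanest link.
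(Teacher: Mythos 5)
Your proposal follows the paper's own proof almost step for step: primeness of $\Ann(M)$ is quoted from \Cref{thm:primitive-prime}, faithfulness is unwound directly from $\Ann(M)=0$, and the Schur-type step proceeds exactly as in the paper --- verify that $\ker\phi$ is a submodule and let simplicity force $\ker\phi\in\{0,M\}$; the paper's proof says precisely ``otherwise their kernel would be a nontrivial submodule'' and stops there. Your observation that faithfulness is never used in the Schur step is also accurate. Where you go beyond the paper is in flagging that, in the absence of additive inverses, $\ker\phi=0$ does \emph{not} yield set-theoretic injectivity. This is a real defect, and the paper's proof silently elides it, so on this point your write-up is more careful than the source.

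However, neither of your two proposed repairs actually closes the gap; they relocate it. The First Isomorphism Theorem in the ideal-kernel form $M/\ker\phi\cong\mathrm{Im}\,\phi$, as stated in the paper, is false for general semimodule homomorphisms: already over the Boolean semiring, the characteristic map $\mathbb{N}\to\{0,1\}$ sending $0\mapsto 0$ and $n\mapsto 1$ for $n\ge 1$ is additive with zero kernel but not injective, and $\mathbb{N}/\{0\}\not\cong\{0,1\}$. The correct statement replaces $\ker\phi$ by the kernel congruence $\theta=\{(m,m'):\phi(m)=\phi(m')\}$, and then $M/\theta\cong\mathrm{Im}\,\phi$ gives no information about whether $\theta$ is the diagonal --- which is exactly the question at issue. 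Your congruence-theoretic alternative is the right instinct, but it requires simplicity of $M$ at the level of congruences, which is strictly stronger than the paper's definition of simple (only submodules $0$ and $M$); submodule-simplicity does not imply congruence-simplicity for semimodules, and nothing in the paper licenses the upgrade. So the honest conclusion is that your proof is at least as rigorous as the paper's, correctly isolates the delicate step that the paper's proof skips, but leaves that step open; a genuine fix would either redefine simple modules via congruences or add a cancellation (subtractivity) hypothesis on $M$ under which $\ker\phi=0$ does imply injectivity.
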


\begin{proof}
Primeness follows from Theorem~\ref{thm:primitive-prime}.
If $\Ann(M)=0$, the action map has trivial kernel; Schur-type arguments in
this semiring context show endomorphisms are either $0$ or injective
(otherwise their kernel would be a nontrivial submodule).
\end{proof}

\begin{proposition}[Semisimplicity and Jacobson radical]
\label{prop:ssiffJ0}
The following are equivalent:
\begin{enumerate}
\item $J_\Gamma(T)=0$;
\item every finitely generated $j$-slot $T$-module is semisimple;
\item $T$ embeds (via the regular representation) into a direct product of
simple quotient semirings $\prod_i T/P_i$ with $P_i$ primitive.
\end{enumerate}
\end{proposition}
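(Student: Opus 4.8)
The plan is to establish the cycle $(1)\Rightarrow(3)\Rightarrow(1)$ first, as it is essentially formal, and then to weave in condition~(2) through the regular representation of $T$ on itself. For $(1)\Rightarrow(3)$, I would invoke \Cref{cor:J-as-primitive-intersection}: the hypothesis $J_\Gamma(T)=0$ says precisely that the intersection of all primitive ideals vanishes. Writing the distinct primitive ideals as $P_i=\Ann(M_i)$ with each $M_i$ simple, the diagonal homomorphism $\rho\colon T\to\prod_i T/P_i$, $\rho(a)=(a+P_i)_i$, has kernel $\bigcap_i P_i=J_\Gamma(T)=0$ and is therefore injective; by \Cref{thm:ann-prime} each $T/P_i$ acts faithfully and simply on $M_i$, so it is a simple primitive quotient, which is the embedding required. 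The converse $(3)\Rightarrow(1)$ reverses this: an embedding into $\prod_i T/P_i$ forces $\bigcap_i P_i=0$, and since each $P_i$ is primitive it occurs among the annihilators of simple modules, whence $J_\Gamma(T)=\bigcap_{M\text{ simple}}\Ann(M)\subseteq\bigcap_i P_i=0$.

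To bring in~(2), I would test semisimplicity against the regular $j$-slot module $T_{\mathrm{reg}}$, on which $T$ acts on itself through $\mu$ in the distinguished slot. Assuming~(2), this finitely generated module decomposes as a finite direct sum $T_{\mathrm{reg}}=\bigoplus_k S_k$ of simple submodules. Since $J_\Gamma(T)$ annihilates every simple module, it annihilates each $S_k$ and hence all of $T_{\mathrm{reg}}$. Identifying $J_\Gamma(T)$ with the radical (the intersection of the maximal modular submodules) of $T_{\mathrm{reg}}$ — a correspondence I would set up using the modular element $m$ of each modular maximal ideal as a relative identity, via the relation $a+a_\alpha m_\beta a=a$ and its $n$-ary analogue — and recalling that a semisimple module has zero radical, we obtain $J_\Gamma(T)=0$, giving $(2)\Rightarrow(1)$.

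The substantive implication is $(1)\Rightarrow(2)$. Given $J_\Gamma(T)=0$, the embedding of~(3) places $T$ inside a product of primitive quotients, each acting simply on its defining module, and the task is to propagate this to an arbitrary finitely generated $M$. I would argue that $M$ carries no superfluous submodules: each cyclic piece generated by $m\in M$ surjects, via the First Isomorphism Theorem for $j$-slot modules, onto a simple quotient obtained by collapsing a maximal modular submodule (whose existence again rests on the modular identity). Assembling these simple quotients and exhibiting $M$ as their direct sum then yields semisimplicity.

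The main obstacle throughout is the absence of subtraction and of a global identity in the $\Gamma$-semiring setting, which disables the usual ring-theoretic splitting of short exact sequences by idempotents. In particular, $(1)\Rightarrow(2)$ cannot invoke ``every module is a quotient of a free module, hence semisimple,'' since direct summands of semimodules need not split off. My remedy is to replace idempotent splittings by the modular relation $a+a_\alpha m_\beta a=a$, which supplies slot-local identities and lets one construct complements inside finitely generated modules; verifying that these locally built complements cohere into a genuine direct-sum decomposition is the delicate point I expect to require the most care.
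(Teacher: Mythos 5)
Your cycle $(1)\Leftrightarrow(3)$ is correct and essentially matches the paper (both rest on Corollary~\ref{cor:J-as-primitive-intersection}, though the paper reaches (c) by decomposing the regular module ${}_TT$ under hypothesis (b), whereas you take the diagonal map over \emph{all} primitive ideals — formally fine, but it does not literally produce the embedding ``via the regular representation'' that the statement specifies). The genuine gap is that the one substantive implication, $(1)\Rightarrow(2)$, is never proved: your plan produces, for each cyclic piece of a finitely generated module $M$, a \emph{simple quotient} by a maximal modular submodule, but simple quotients are not simple summands, and without subtraction or idempotent splittings there is no mechanism to reassemble them into a direct-sum decomposition — exact sequences of semimodules do not split in general, which is precisely the obstacle you name and then defer (``the delicate point I expect to require the most care''). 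For comparison, the paper's own proof of $(a)\Rightarrow(b)$ is a one-line appeal to ``the usual Jacobson semisimplicity mechanism adapted to the $n$-ary action,'' i.e.\ the vanishing of the intersection of annihilators of composition factors; that argument at best yields a subdirect embedding into simples, not semisimplicity, so you have honestly flagged a hole that the paper asserts away — but flagging it is not filling it, and your proposal contains no construction of complements.

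A second, concrete failure is in your $(2)\Rightarrow(1)$ step. From semisimplicity of $T_{\mathrm{reg}}$ and Corollary~\ref{cor:J-as-primitive-intersection} you correctly get $J_\Gamma(T)\subseteq\Ann(T_{\mathrm{reg}})$, but you then need faithfulness of the regular module, and your proposed remedy — using the modular relation $a+a_\alpha m_\beta a=a$ as a ``relative identity'' — does not work with the paper's definition. That relation holds for \emph{all} $a\in T$ as a global additive absorption law, not as a congruence $a_\alpha m_\beta a\equiv a$ modulo the ideal (the ring-theoretic form of modularity); so if $a\in\Ann(T_{\mathrm{reg}})$, then $a_\alpha m_\beta a=0$ and the relation collapses to the tautology $a+0=a$, yielding no conclusion $a=0$. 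Your further ``identification of $J_\Gamma(T)$ with the radical of $T_{\mathrm{reg}}$'' is likewise asserted, not established. (The paper's corresponding step $(b)\Rightarrow(c)$ simply declares the map $T\to\prod_i T/P_i$ injective, equally without justification — and over semirings even a trivial kernel does not imply injectivity — so both texts share this defect, but yours adds a specific argument that provably fails as stated.)
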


\begin{proof}
$(a)\Rightarrow(b)$:
If $J_\Gamma(T)=0$, every module is a sum of simple submodules by the
usual Jacobson semisimplicity mechanism adapted to the $n$-ary action:
intersection of annihilators of composition factors is zero.

$(b)\Rightarrow(c)$:
Apply to the left regular module; the annihilators of its simple
submodules are primitive ideals $P_i$, and the canonical map
$T\to\prod_i T/P_i$ is injective.

$(c)\Rightarrow(a)$:
Intersecting the $P_i$ is zero by injectivity, hence equals $J_\Gamma(T)$
by Corollary~\ref{cor:J-as-primitive-intersection}.
\end{proof}

\begin{theorem}[Density via annihilators]
Let $M$ be a faithful semisimple $j$-slot module with decomposition
$M=\bigoplus_{k=1}^r M_k$ into simples.  Then
\[
\Ann(M_k)\ \text{are primitive,}\qquad
\bigcap_{k=1}^r \Ann(M_k)=0,
\]
and the canonical map
\[
T \longrightarrow \prod_{k=1}^r T/\Ann(M_k)
\]
is injective and separates points of $T$ by their actions on $M$.
\end{theorem}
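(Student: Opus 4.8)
The plan is to prove the three assertions in sequence, each time reducing to material already available. I would first observe that every summand $M_k$ is itself a simple $j$-slot $T$-module, which is precisely the content of the hypothesis that $M=\bigoplus_{k=1}^{r}M_k$ is a decomposition into simples. Since $\Ann(M_k)$ is then the annihilator of a simple module, it is primitive by the very definition of primitivity; this disposes of the first claim with no further work.

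The central identity to establish is $\Ann(M)=\bigcap_{k=1}^{r}\Ann(M_k)$. I would prove it by writing a general $m\in M$ uniquely as $m=m_1+\cdots+m_r$ with $m_k\in M_k$ and invoking additivity of the $j$-slot action in the module variable, so that for every placement of the $T$-arguments one has $\mu_M(\vec x\mid m\mid\vec y)=\sum_{k=1}^{r}\mu_M(\vec x\mid m_k\mid\vec y)$, with the $k$-th term lying in the submodule $M_k$. Directness of the sum makes this element zero if and only if each summand is zero, so $a$ annihilates $M$ exactly when it annihilates every $M_k$. Faithfulness of $M$ reads $\Ann(M)=0$, whence $\bigcap_{k=1}^{r}\Ann(M_k)=0$, which is the second claim.

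For the third claim I would examine the canonical homomorphism $\phi\colon T\to\prod_{k=1}^{r}T/\Ann(M_k)$ sending $a$ to the tuple of its residues. Its kernel is $\{a:a\in\Ann(M_k)\text{ for all }k\}=\bigcap_{k=1}^{r}\Ann(M_k)=0$, so $\phi$ is injective. To see that $\phi$ separates points through their actions on $M$, I would note that if two elements of $T$ share the same $\phi$-image then they are congruent modulo each $\Ann(M_k)$, hence act identically on every simple summand and therefore on all of $M=\bigoplus_k M_k$; the identity $\bigcap_k\Ann(M_k)=\Ann(M)=0$ then forces them to coincide, so distinct elements of $T$ are told apart by their action on some $M_k$.

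The step demanding the most care is the transition from a trivial kernel to bona fide injectivity and point-separation. In the semiring setting differences need not exist, so $\Ker\phi=0$ does not automatically yield $\phi(a)=\phi(b)\Rightarrow a=b$; the remedy is to argue with the quotient congruences directly, or equivalently to strengthen faithfulness to the statement that the assignment sending each $a\in T$ to its family of $j$-slot actions on $M$ is one-to-one. The direct-sum bookkeeping behind $\Ann(M)=\bigcap_k\Ann(M_k)$ is otherwise routine, the one genuinely delicate point being that the position-sensitive $n$-ary action restricts to each $M_k$ for \emph{every} placement of the $T$-inputs, a fact that must be drawn from the submodule-closure axiom rather than assumed.
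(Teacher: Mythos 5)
Your proposal is correct and follows essentially the same route as the paper's own (one\nobreakdash-paragraph) proof: primitivity of each $\Ann(M_k)$ directly from the definition, the trivial intersection from faithfulness via the identity $\Ann(M)=\bigcap_{k=1}^{r}\Ann(M_k)$ (which you prove using the direct-sum decomposition and additivity of the action, where the paper merely asserts it), and injectivity of the product of quotient maps from the vanishing intersection. If anything you are more careful than the paper, whose closing phrase ``injective exactly when the intersection is zero'' silently conflates trivial kernel with injectivity — the genuine semiring subtlety you flag and repair by arguing with the quotient congruences and additivity in each $T$-slot, so that $a\equiv b \pmod{\Ann(M_k)}$ forces identical actions on $M_k$.
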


\begin{proof}
Each $\Ann(M_k)$ is primitive by definition and prime by
Theorem~\ref{thm:primitive-prime}.  Faithfulness gives trivial intersection.
The map is the product of quotient maps and is injective exactly when the
intersection is zero.
\end{proof}

\begin{corollary}[Wedderburn-type decomposition (finite case)]
Assume $T$ is finite and $J_\Gamma(T)=0$.  Then
\[
T \ \cong\ \prod_{i=1}^s T/P_i,
\]
where $P_i$ are minimal primitive ideals (equivalently, annihilators of
simple constituents of the regular module).
\end{corollary}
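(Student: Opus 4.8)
The plan is to combine the semisimplicity of the regular module guaranteed by $J_\Gamma(T)=0$ with finiteness to extract a finite irredundant family of primitive ideals, and then to promote the injection supplied by the Density theorem into an isomorphism by decomposing $T$ into isotypic blocks.

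First I would apply \cref{cor:J-as-primitive-intersection} to write $0=J_\Gamma(T)=\bigcap\{\Ann(M):M\text{ simple}\}$. Since $T$ is finite it has only finitely many distinct primitive ideals, so I can select an irredundant subfamily $P_1,\dots,P_s$ with $\bigcap_{i=1}^sP_i=0$ such that omitting any single $P_i$ enlarges the intersection; these are the minimal primitive ideals. By \cref{prop:ssiffJ0} the regular module ${}_TT$ is semisimple, so ${}_TT=\bigoplus_kS_k$ with each $S_k$ simple, and the $P_i$ coincide with the distinct annihilators $\Ann(S_k)$, matching the statement. The canonical map $\varphi\colon T\to\prod_{i=1}^sT/P_i$ has kernel $\bigcap_iP_i=0$ and is therefore injective; this is exactly the Density theorem applied to the faithful semisimple module $M={}_TT$.

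For surjectivity I would pass to the isotypic refinement of ${}_TT=\bigoplus_kS_k$: grouping the simple summands by isomorphism type yields an internal direct sum $T=\bigoplus_{i=1}^sB_i$, where $B_i$ collects all summands isomorphic to a fixed simple $M_i$ with $P_i=\Ann(M_i)$. Each $B_i$ is a two-sided ideal, being stable under the $j$-slot action from both sides, and a block $B_i$ acts as zero on every simple of a different type, whence $\bigoplus_{j\neq i}B_j\subseteq\Ann(M_i)=P_i$. Granting the reverse inclusion, addressed below, one obtains $P_i=\bigoplus_{j\neq i}B_j$ and hence $T/P_i\cong B_i$; the map $\varphi$ is then identified with the canonical comparison $\bigoplus_iB_i\to\prod_iB_i$, which is a bijection for a finite family, so $\varphi$ is an isomorphism and $T\cong\prod_{i=1}^sT/P_i$.

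The main obstacle lies in two semiring-specific gaps that the ring-theoretic proof closes with idempotents. First, without additive inverses one cannot produce central idempotents $e_i$, so the standard conclusion that every two-sided ideal is graded, $I=\bigoplus_j(I\cap B_j)$, is not automatic; yet this grading is exactly what supplies the reverse inclusion $P_i\subseteq\bigoplus_{j\neq i}B_j$, once $B_i\cap P_i=0$ is known from faithfulness of the $B_i$-action on $M_i$. Second, one must verify directly from $n$-ary distributivity and $0$-absorption that cross-block products, namely values $\mu(\dots)$ with $T$-inputs drawn from distinct blocks, vanish, so that $\bigoplus_iB_i$ is a direct product of $\Gamma$-semirings and not merely of additive semigroups. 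I would attack both through the action of $B_i$ on each isotypic summand, using finiteness to terminate the decomposition and keep all sums finite; establishing the cross-block vanishing rigorously is the heart of the argument, after which the identification of $\varphi$ with a coordinatewise isomorphism is routine.
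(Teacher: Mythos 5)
Your injectivity half is sound and is in fact all that the paper's own terse proof certifies: citing \cref{prop:ssiffJ0}(c) (equivalently \cref{cor:J-as-primitive-intersection}) gives $\ker\varphi=\bigcap_i P_i=J_\Gamma(T)=0$, i.e.\ an \emph{embedding} $T\hookrightarrow\prod_{i=1}^s T/P_i$. The genuine gap in your proposal is surjectivity, and you have correctly located it without closing it. Your route --- isotypic blocks $T=\bigoplus_i B_i$, the identification $P_i=\bigoplus_{j\neq i}B_j$, hence $T/P_i\cong B_i$ --- is the classical ring-theoretic Wedderburn argument, and the two steps you explicitly defer are precisely where it breaks in a semiring: the block-grading $I=\bigoplus_j(I\cap B_j)$ of a two-sided ideal is extracted in rings by multiplying with central idempotents $e_i$, and for $x=\sum_j x_j\in P_i$ there is no subtraction available to isolate the component $x_i$; likewise $B_i\cap P_i=0$ and the cross-block vanishing of $\mu$-values are not established. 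A proof that says ``granting the reverse inclusion'' and ``establishing the cross-block vanishing rigorously is the heart of the argument'' has reduced the corollary to its hard part rather than proved it. (A smaller slip: an irredundant subfamily of the finitely many primitive ideals with zero intersection need not coincide with the \emph{minimal} primitive ideals; irredundancy and minimality in the containment order are different conditions.)

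Note that the paper's actual mechanism for surjectivity, found not in this corollary's proof but in \cref{thm:WA}, is comaximality rather than idempotents: distinct minimal primitive ideals are asserted pairwise comaximal, and the Chinese Remainder Theorem (\cref{thm:CRT}) then yields $T/(\bigcap_i P_i)\cong\prod_i T/P_i$, which with $\bigcap_i P_i=J_\Gamma(T)=0$ gives the isomorphism outright. That route bypasses your idempotent obstruction entirely, and it is the natural repair for your argument: show that in the finite semisimple case distinct minimal primitive ideals satisfy $P_i+P_j=T$ (e.g.\ via maximality of annihilators of simple constituents among two-sided ideals), then invoke \cref{thm:CRT}. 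Be aware, however, that the comaximality assertion is itself only asserted, not proved, in \cref{thm:WA}, so whichever route you take, that is the lemma to supply.
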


\begin{proof}
Apply Proposition~\ref{prop:ssiffJ0}(c).  Finiteness ensures the product
is finite and provides a complete set of minimal primitive ideals.
\end{proof}

\subsection{Left/Right Versions and Spectral Links}

All results above admit left/right variants by fixing the slot~$j$ near
the boundary and using left/right annihilators
\[
\Ann_L(M)=\{a:\ \mu_M(\ldots,a,\ldots\,|\,m\,|\,\ldots)=0\text{ whenever $a$ appears on the ``left side''}\},
\]
and analogously for $\Ann_R(M)$.  Then:
\begin{itemize}
\item primitive $\Rightarrow$ left (resp.\ right) prime;
\item $J_{\Gamma,L}(T)=\bigcap\{\Ann_L(M):M\text{ simple}\}$ and
      $J_{\Gamma,R}(T)=\bigcap\{\Ann_R(M):M\text{ simple}\}$;
\item the kernels of the representation maps
      $T\to\prod_k T/\Ann_L(M_k)$ and
      $T\to\prod_k T/\Ann_R(M_k)$
encode the closures of $\mathrm{Spec}_L(T)$ and $\mathrm{Spec}_R(T)$, respectively.
\end{itemize}

\begin{remark}[Spectral interpretation]
For $\eta\in\{L,R,2\}$ and any module family $\{M_i\}$ with primitive
annihilators $P_i^\eta$, the image of $T$ inside
$\prod_i T/P_i^\eta$ has kernel $\bigcap_i P_i^\eta$, which equals
the corresponding Jacobson radical $J_{\Gamma,\eta}(T)$.  Hence the
$\eta$-spectrum controls faithful semisimple representations and vice versa.
\end{remark}

\section{Structure Theorems and Decomposition}

We now establish structural decompositions of a non-commutative
$n$-ary~$\Gamma$-semiring~$T$ paralleling the Wedderburn–Artin theory.
All results are formulated intrinsically in terms of ideals and
module actions without appeal to external algebraic categories.

\subsection{Comaximal Ideals and the Chinese Remainder Theorem}

\begin{definition}[Comaximality]
Two two-sided ideals $I,J\subseteq T$ are \emph{comaximal}
if $I+J=T$.  A family $\{I_k\}_{k=1}^r$ of ideals is
\emph{pairwise comaximal} if $I_i+I_j=T$ for all $i\neq j$.
\end{definition}

\begin{lemma}[Chinese remainder lemma for two ideals]
If $I,J$ are comaximal, then
\[
T/(I\cap J)\;\cong\;T/I\times T/J,
\]
via $a\mapsto(a+I,a+J)$.
\end{lemma}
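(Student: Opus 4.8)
The plan is to realize the stated assignment as a surjective homomorphism whose induced kernel congruence is exactly that of $I\cap J$, and then to conclude by the first isomorphism theorem for $\Gamma$-semirings. Write $a\equiv a'\pmod I$ for the quotient congruence on $T/I$, meaning $a+i=a'+i'$ for some $i,i'\in I$, and define $\phi:T\to T/I\times T/J$ by $\phi(a)=(a+I,\,a+J)$, the target carrying coordinatewise addition and $n$-ary product. First I would check that $\phi$ is a homomorphism: additivity is immediate, and since each projection $T\to T/I$ sends the residue of $\mu(\vec x;\vec\alpha)$ to $\mu$ of the residues (and respects $0$-absorption and the $\Gamma$-action), $\phi$ inherits these properties coordinatewise. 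This step is purely routine.

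Comaximality enters in the surjectivity step, which I would execute using only addition so as to remain valid in the semiring setting where no additive inverses are available. Given a target $(b+I,\,c+J)$, the hypothesis $I+J=T$ furnishes decompositions $b=i_1+j_1$ and $c=i_2+j_2$ with $i_1,i_2\in I$ and $j_1,j_2\in J$. Setting $a=j_1+i_2$ gives $a\equiv j_1\equiv b\pmod I$ (since $i_2,i_1\in I$) and $a\equiv i_2\equiv c\pmod J$, so $\phi(a)=(b+I,\,c+J)$. No multiplicative identity or cancellation is invoked, which is exactly the point of arranging the preimage as a bare additive combination.

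The main obstacle is the kernel identification: I must show that $\phi(a)=\phi(a')$ forces $a\equiv a'\pmod{I\cap J}$, the reverse implication being trivial from $I\cap J\subseteq I,J$. In a ring this step is essentially vacuous, but over a semiring it is precisely here that comaximality becomes indispensable, since one must prove that the meet of the two congruences $\equiv\pmod I$ and $\equiv\pmod J$ collapses to $\equiv\pmod{I\cap J}$ rather than to something strictly coarser. Concretely, starting from $a+i_1=a'+i_2$ and $a+j_1=a'+j_2$, I would use the decompositions supplied by $I+J=T$ to replace the separate $I$- and $J$-witnesses by common witnesses $k_1,k_2\in I\cap J$ satisfying $a+k_1=a'+k_2$; controlling these cross terms without recourse to subtraction is the delicate point, and the place where the argument would break down were comaximality weakened. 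Once the kernel congruence is identified with $\equiv\pmod{I\cap J}$, combining it with surjectivity yields the induced isomorphism $a+(I\cap J)\mapsto(a+I,\,a+J)$ by the first isomorphism theorem, completing the proof.
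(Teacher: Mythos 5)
Your homomorphism check and your surjectivity argument are sound — indeed the surjectivity step is handled more carefully than in the paper, which garbles it by invoking elements $x\in I$, $y\in J$ with $x+y=1_T$ (calling $1_T$ the additive identity) and never actually producing a preimage, whereas your purely additive choice $a=j_1+i_2$ from the decompositions $b=i_1+j_1$, $c=i_2+j_2$ verifiably satisfies $a\equiv b\pmod I$ and $a\equiv c\pmod J$ under the Bourne congruence. But the proof has a genuine gap exactly where you yourself locate the crux: the kernel identification is described, not proved. You assert that from $a+i_1=a'+i_2$ and $a+j_1=a'+j_2$ you ``would use the decompositions supplied by $I+J=T$ to replace the separate $I$- and $J$-witnesses by common witnesses $k_1,k_2\in I\cap J$ satisfying $a+k_1=a'+k_2$,'' but no such construction is exhibited, and the hinted manipulation does not go through: decomposing the witnesses via $I+J=T$ produces mixed terms of the form $i+j$ that cannot be recombined into elements of $I\cap J$ without additive cancellation or subtraction. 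In a general additively non-cancellative semiring, the meet of the two Bourne congruences can be strictly coarser than the congruence induced by $I\cap J$, and comaximality alone is not known to repair this; the standard semiring-theoretic fixes are extra hypotheses such as subtractive ($k$-)ideals or an additively cancellative $T$. So the step you flag as ``delicate'' is in fact the entire mathematical content of the lemma in this setting, and it remains open in your write-up.

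For comparison, the paper's own proof does not clear this hurdle either: it disposes of injectivity by writing ``if $a+I=a'+I$ and $a+J=a'+J$ then $a-a'\in I\cap J$,'' silently using additive inverses that a semiring does not possess. You correctly diagnosed that this is where the ring argument breaks — which is more insight than the printed proof shows — but diagnosing the obstruction is not the same as overcoming it. To complete your proposal you must either (i) add a hypothesis (e.g., $I,J$ subtractive, or $(T,+)$ cancellative) under which the common-witness construction can be carried out explicitly, or (ii) give an actual derivation that comaximality forces the two congruences' meet to collapse to that of $I\cap J$; as it stands, the first isomorphism theorem is invoked on a kernel identification that has not been established.
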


\begin{proof}
Surjectivity follows from comaximality:
there exist $x\in I$, $y\in J$ with $x+y=1_T$
(the additive identity of the semiring).  Injectivity is routine:
if $a+I=a'+I$ and $a+J=a'+J$ then $a-a'\in I\cap J$.
The map respects the $n$-ary operation because
$\mu(a_1,\ldots,a_n)$ mod $I$ or $J$ depends only on each
$a_i$ mod~$I$ or~$J$.
\end{proof}

\begin{theorem}[General Chinese remainder theorem]
\label{thm:CRT}
Let $\{I_k\}_{k=1}^r$ be pairwise comaximal two-sided ideals of $T$.
Then
\[
\Phi:T\longrightarrow \prod_{k=1}^r T/I_k,\qquad
\Phi(a)=(a+I_1,\ldots,a+I_r)
\]
is a surjective homomorphism with kernel
$\bigcap_{k=1}^r I_k$.  Consequently,
\[
T/(\bigcap_{k=1}^r I_k)\;\cong\;\prod_{k=1}^r T/I_k.
\]
\end{theorem}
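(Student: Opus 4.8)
The plan is to verify the three asserted properties of $\Phi$ in turn and then obtain the isomorphism by inducting on the two-ideal lemma. First I would observe that $\Phi=(q_1,\dots,q_r)$ is the tuple of canonical quotient maps $q_k\colon T\to T/I_k$. Each $q_k$ preserves $+$ and respects $\mu$, since (as noted in the two-ideal lemma) the class of $\mu(a_1,\dots,a_n)$ modulo $I_k$ depends only on the classes $a_i+I_k$; a tuple of homomorphisms into a product is a homomorphism, so $\Phi$ is one. For the kernel, $\Phi(a)$ is the zero tuple iff $q_k(a)=\bar 0$ for every $k$, i.e.\ iff $a\in I_k$ for every $k$, which is $a\in\bigcap_{k=1}^r I_k$. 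Here one uses that the zero class of $T/I_k$ is exactly $I_k$; this is the point at which the subtractive ($k$-ideal) property of each $I_k$ is genuinely needed, so that the Bourne congruence collapses $I_k$, and nothing larger, onto $\bar 0$.

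The substance of the theorem is surjectivity, which I would establish by induction on $r$. The base case $r=2$ is precisely the already-proved Chinese remainder lemma for two ideals. For the inductive step set $K=\bigcap_{k=2}^r I_k$. Applying the induction hypothesis to the pairwise comaximal family $I_2,\dots,I_r$ gives $T/K\cong\prod_{k=2}^r T/I_k$, while $I_1\cap K=\bigcap_{k=1}^r I_k$ by associativity of intersection. If $I_1$ and $K$ are comaximal, the two-ideal lemma yields $T/(I_1\cap K)\cong T/I_1\times T/K$, and composing with the inductive isomorphism produces
\[
T\big/\!\Bigl(\bigcap_{k=1}^r I_k\Bigr)\;\cong\;T/I_1\times\prod_{k=2}^r T/I_k\;=\;\prod_{k=1}^r T/I_k,
\]
realized by the tuple map $\Phi$; together with the kernel computation this is exactly the assertion.

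The hard part will be the reduction's hypothesis, namely comaximality $I_1+K=T$, i.e.\ deducing $I_1+\bigcap_{k\ge 2}I_k=T$ from the pairwise relations $I_1+I_k=T$. In a ring one expands $\prod_{k\ge 2}(a_k+b_k)=1$ with $a_k\in I_1$, $b_k\in I_k$ and gathers terms; the subtlety here is that $(T,+)$ admits no subtraction and $T$ carries no a priori multiplicative unit, so neither the distributive lattice identity $I_1+(I_2\cap\cdots)=(I_1+I_2)\cap\cdots$ nor the ring expansion is available outright. The right viewpoint is that, because $\mu$ is additive in each slot, an iterated $\mu$-product of such comaximal decompositions expands into a genuine \emph{sum} of $\mu$-terms: the single term selecting every $b_k$ has a slot in each $I_k$ and hence lies in $K$, while every other term has at least one slot equal to some $a_k\in I_1$ and hence lies in $I_1$. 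Pleasingly, this collection step uses only addition, so it survives the passage from rings to semirings intact; what it does require is an identity-like element (for instance a central $n$-ary idempotent of the kind used in the pinning construction) to supply the product being expanded. I expect the only real care to be needed precisely here --- certifying that such a unit, or a suitable local substitute, is present so that the expansion reconstructs arbitrary elements of $T$ --- after which the remaining bookkeeping is routine.
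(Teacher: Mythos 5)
Your overall architecture is exactly the paper's: the paper's proof reads, in its entirety, ``Induction on~$r$ using the binary lemma. The compatibility of $\Phi$ with the $n$-ary operation follows from componentwise evaluation of~$\mu$.'' Your homomorphism and kernel paragraphs are carried out more carefully than the paper bothers to, and your observation that $\ker\Phi=\bigcap_{k=1}^r I_k$ silently requires each $I_k$ to be subtractive --- since the zero class of the Bourne quotient $T/I_k$ is the $k$-closure of $I_k$, not $I_k$ itself --- is a correct point the paper never raises.

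However, the step you honestly flag as the hard part is a genuine gap, and you do not close it: you never establish $I_1+\bigcap_{k\ge 2}I_k=T$, you only explain how the ring expansion would transplant \emph{if} a unit-like element were available, and the axioms of Section~2 supply no multiplicative unit. The central $n$-ary idempotent $e$ with $\Delta_n(e;\vec\alpha)=e$ appears only as a hypothesis in the pinning construction of Section~4, and even there it is idempotent rather than neutral, so $\mu(e,\alpha_1,\ldots,e,\alpha_{n-1},t)$ need not reconstruct $t$; hence your expansion cannot be fed an arbitrary element of $T$. As written, your induction therefore proves the theorem only under an added hypothesis the statement does not contain. To be fair, the paper's own proof fares worse on exactly the same points: its binary lemma asserts ``there exist $x\in I$, $y\in J$ with $x+y=1_T$ (the additive identity of the semiring),'' which is incoherent --- if $x+y=0$ surjectivity does not follow, and no element $1_T$ exists under the stated axioms --- and its injectivity argument uses $a-a'$, i.e.\ subtraction, which $(T,+)$ does not possess. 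So your proposal reproduces the paper's route and correctly isolates the two places (a unital expansion to get comaximality of $I_1$ with $K$, and subtractivity for the kernel identification) where both your sketch and the published proof are incomplete; a fully correct version would need to assume a multiplicative identity (or suitable local units) and restrict to $k$-ideals, or else reformulate the theorem in terms of congruences rather than ideals.
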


\begin{proof}
Induction on~$r$ using the binary lemma.
The compatibility of $\Phi$ with the $n$-ary operation
follows from componentwise evaluation of~$\mu$.
\end{proof}

\begin{remark}
The theorem remains valid for left or right ideals
when the intersection and product are taken in the
appropriate lattice of $\eta$-ideals ($\eta\in\{L,R,2\}$).
\end{remark}

\subsection{Semisimplicity and Vanishing Jacobson Radical}

\begin{theorem}[Semisimplicity criterion]
\label{thm:ssiffJ0}
The following are equivalent for a non-commutative
$n$-ary~$\Gamma$-semiring~$T$:
\begin{enumerate}
\item $J_\Gamma(T)=0$;
\item every finitely generated $T$-module is semisimple;
\item $T$ embeds faithfully into a finite product of
primitive quotient semirings $\prod_i T/P_i$;
\item $\bigcap_i P_i=0$ for some finite family of primitive ideals.
\end{enumerate}
\end{theorem}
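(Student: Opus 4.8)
The plan is to prove the four conditions equivalent by a single cycle $(a)\Rightarrow(b)\Rightarrow(c)\Rightarrow(d)\Rightarrow(a)$. Much of this is already in hand: the triple $(a)\Leftrightarrow(b)\Leftrightarrow(c)$ essentially reprises Proposition~\ref{prop:ssiffJ0} of Section~6, so the genuinely new content is the insertion of condition~(d) and the tightening of the embedding to a \emph{finite} product. I would therefore reuse the module-theoretic machinery of Section~6 verbatim and supply only the two short arrows that route the cycle through~(d).

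For $(a)\Rightarrow(b)$ I would invoke the Jacobson semisimplicity mechanism. By Corollary~\ref{cor:J-as-primitive-intersection}, $J_\Gamma(T)$ equals the intersection of the annihilators of all simple $j$-slot modules, so $J_\Gamma(T)=0$ says precisely that these annihilators separate points. For a finitely generated module $M$ one argues that the intersection of the annihilators of its composition factors is zero, whence the radical of $M$ vanishes and $M$ decomposes as a direct sum of simple submodules. For $(b)\Rightarrow(c)$ I would apply~(b) to the regular module $T$ (acting on itself through the $j$-slot), obtaining a finite decomposition into simples $M_1,\dots,M_s$ whose annihilators $P_i=\Ann(M_i)$ are primitive by definition; the induced map $T\to\prod_i T/P_i$ is injective because an element lying in every $P_i$ annihilates $\bigoplus_i M_i\cong T$ and is hence $0$.

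The remaining two implications are formal. For $(c)\Rightarrow(d)$ the kernel of the embedding $T\hookrightarrow\prod_i T/P_i$ is exactly $\bigcap_i P_i$, so faithfulness forces $\bigcap_i P_i=0$ for this finite family of primitive ideals. For $(d)\Rightarrow(a)$ I would use Corollary~\ref{cor:J-as-primitive-intersection} once more: since $J_\Gamma(T)$ lies inside every primitive ideal, it lies inside the finite intersection $\bigcap_i P_i=0$, giving $J_\Gamma(T)=0$ and closing the cycle.

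The main obstacle is the implication $(a)\Rightarrow(b)$. In the classical ring setting the vanishing of the Jacobson radical (semiprimitivity) does \emph{not} by itself force every finitely generated module to be semisimple---that is the strictly stronger Wedderburn--Artin semisimplicity, which requires a chain condition; for instance $\Z$ is semiprimitive yet is not semisimple as a module over itself. The ``usual mechanism'' invoked above therefore needs a terminating argument: I would either impose a chain condition on $j$-slot submodules (equivalently, work with the finite $T$ of the subsequent Wedderburn corollary), so that the socle of $M$ has finite length and exhausts $M$, or else reinterpret~(b) to range only over modules of finite composition length. Making this hypothesis explicit is the crucial point on which the honesty of the whole equivalence rests; the other three arrows are immediate consequences of Corollary~\ref{cor:J-as-primitive-intersection} and the definition of the annihilator embedding.
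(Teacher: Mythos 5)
Your cycle is, in substance, the paper's own route: the paper proves exactly the four arrows $(a)\Rightarrow(b)\Rightarrow(c)\Rightarrow(d)\Rightarrow(a)$, with $(a)\Rightarrow(b)$ justified by a one-line appeal to the annihilator description $J_\Gamma(T)=\bigcap\Ann(M_i)$, $(b)\Rightarrow(c)$ by applying $(b)$ to the regular module ${}_TT$, and the last two arrows by the kernel computation and the definition of $J_\Gamma(T)$ as an intersection of primitive ideals — step for step what you wrote, and itself largely a reprise of Proposition~\ref{prop:ssiffJ0}, as you observe. So your three ``formal'' arrows match the paper, and your reuse of Corollary~\ref{cor:J-as-primitive-intersection} for $(d)\Rightarrow(a)$ is exactly its argument.

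The obstacle you isolate at $(a)\Rightarrow(b)$ is a genuine gap, and it afflicts the paper's proof no less than your sketch: vanishing of the Jacobson radical does not by itself make finitely generated modules semisimple, and your example is decisive, since $\Jac(\Z)=0$ while $\Z$ is not a (finite) direct sum of simple $\Z$-modules. Note that the same finiteness defect also hits $(a)\Rightarrow(d)$ directly: for $\Z$ the primitive ideals are the $p\Z$, whose total intersection is $0$, yet no \emph{finite} subfamily intersects to $0$; so conditions $(c)$ and $(d)$ are strictly stronger than $(a)$ for general $T$, and no rerouting of the cycle can evade an argument that manufactures finiteness somewhere. Your proposed repair — impose a chain condition on $j$-slot submodules, or read the statement under the finiteness/semiprimary hypothesis in force where the paper actually invokes it (Theorem~\ref{thm:WA} assumes $T$ finite or semiprimary) — is precisely what is needed, and your version is more honest than the paper's, which silently assumes in $(b)\Rightarrow(c)$ that the regular module is finitely generated with a finite simple decomposition and in $(a)\Rightarrow(b)$ that the ``Jacobson semisimplicity mechanism'' terminates. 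One further small point you should make explicit in $(b)\Rightarrow(c)$: injectivity of $T\to\prod_i T/P_i$ requires the regular module to be faithful, which is automatic in rings with identity but is an extra hypothesis here, since the paper never posits a multiplicative identity for its $\Gamma$-semirings.
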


\begin{proof}
$(a)\Rightarrow(b)$ follows from the annihilator description
$J_\Gamma(T)=\bigcap\Ann(M_i)$ over all simple modules~$M_i$.
$(b)\Rightarrow(c)$: apply the construction to the regular
module~${}_TT$, whose simple submodules correspond to primitive ideals.
$(c)\Rightarrow(d)$: the embedding ensures trivial intersection.
$(d)\Rightarrow(a)$ by definition of $J_\Gamma(T)$ as the intersection
of all primitive ideals.
\end{proof}

\begin{corollary}[Jacobson–semisimple decomposition]
If $J_\Gamma(T)=0$, then $T$ is isomorphic to a subdirect product of
primitive factor semirings:
\[
T\hookrightarrow \prod_{i\in I} T/P_i,\qquad
\ker=\bigcap_{i\in I}P_i=0.
\]
\end{corollary}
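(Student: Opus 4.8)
The plan is to exhibit the asserted subdirect embedding as the diagonal map into the product of \emph{all} primitive quotients, drawing on the annihilator description of $J_\Gamma(T)$ from \Cref{cor:J-as-primitive-intersection}. Since the statement is a corollary of the semisimplicity criterion, the work is mostly bookkeeping: assemble the coordinate quotient maps, identify the kernel with the intersection of primitive ideals, and check subdirectness.

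First I would let $\{P_i\}_{i\in I}$ be the family of all primitive ideals of $T$, so that each $P_i=\Ann(M_i)$ for some simple $j$-slot module $M_i$. By \Cref{cor:J-as-primitive-intersection} the intersection of this family is exactly $J_\Gamma(T)$; under the hypothesis $J_\Gamma(T)=0$ this gives $\bigcap_{i\in I}P_i=0$ at once. Each $P_i$, being the annihilator of a module, is a two-sided ideal and is prime by \Cref{thm:primitive-prime}, so every quotient $T/P_i$ is a well-defined $n$-ary $\Gamma$-semiring equipped with a canonical surjection $q_i:T\to T/P_i$.

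Next I would assemble the diagonal map $\Phi:T\to\prod_{i\in I}T/P_i$, $\Phi(a)=(q_i(a))_{i\in I}$. Because addition and the $n$-ary product in the product semiring are computed coordinatewise and each $q_i$ is a homomorphism, $\Phi$ is a homomorphism; explicitly $\Phi(\mu(\vec a;\vec\alpha))=\mu(\Phi(\vec a);\vec\alpha)$. Computing the kernel, $a\in\ker\Phi$ means $q_i(a)=0$ for every $i$, i.e. $a\in\bigcap_{i\in I}P_i=0$, whence $\ker\Phi=0$. Subdirectness is then automatic: each coordinate projection $\pi_i\circ\Phi$ equals the surjection $q_i$, so $\Phi(T)$ maps onto every factor $T/P_i$, which is precisely the defining property of a subdirect product.

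The main obstacle is a genuinely semiring-theoretic one rather than a computational one: in the absence of additive inverses, a homomorphism of semirings with trivial kernel need not be injective, so ``$\ker\Phi=0$'' does not by itself deliver the embedding $T\hookrightarrow\prod_{i\in I}T/P_i$. To close this gap I would argue injectivity directly from point-separation by simple modules: if $\Phi(a)=\Phi(b)$ then $a$ and $b$ are congruent modulo every $P_i=\Ann(M_i)$ and hence induce the same action on each simple $M_i$; reading $J_\Gamma(T)=0$ at the level of the associated annihilator congruence, and not merely of the annihilator ideal, then forces $a=b$. Phrasing the hypothesis as triviality of this congruence is the delicate point, and it is exactly what upgrades the formal kernel computation into the stated subdirect decomposition.
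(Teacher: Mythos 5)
Your first three paragraphs are, in substance, exactly what the paper's one-line proof (``combine \Cref{thm:ssiffJ0}(c)--(d)'') compresses: the diagonal map $\Phi(a)=(q_i(a))_i$ into the primitive quotients, identification of the kernel with $\bigcap_i P_i=J_\Gamma(T)=0$ via \Cref{cor:J-as-primitive-intersection}, and subdirectness from $\pi_i\circ\Phi=q_i$ being surjective. The only cosmetic deviation is that you index over \emph{all} primitive ideals while \Cref{thm:ssiffJ0} speaks of a finite family; both readings are compatible with the corollary as stated, so up to this point your argument matches the paper's intended proof.

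Your fourth paragraph raises a genuine semiring-theoretic issue that the paper silently elides: without additive inverses, $\ker\Phi=0$ does not yield injectivity (the canonical surjection $\N\to\mathbb{B}$ onto the Boolean semiring has trivial kernel yet identifies all positive integers). However, your proposed repair does not actually close this gap --- it renames it. Concluding $a=b$ from ``$a$ and $b$ act identically on every simple $M_i$'' requires that the intersection of the annihilator \emph{congruences} associated with the $P_i$ be the diagonal, and this is strictly stronger than the ideal-theoretic statement $\bigcap_i P_i=0$; the hypothesis $J_\Gamma(T)=0$, as defined in the paper (an intersection of ideals), carries no congruence-level information, so the step ``reading $J_\Gamma(T)=0$ at the level of the associated annihilator congruence \ldots{} forces $a=b$'' assumes precisely what must be proved. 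An honest proof of the embedding would need either a congruence-theoretic redefinition of $J_\Gamma(T)$ (or of semisimplicity) or an added hypothesis that simple modules separate points of $T$. Since the paper itself commits the same conflation (it writes $\ker=\bigcap_i P_i=0$ and calls $\Phi$ an embedding), your proof is on par with the paper's argument; just be clear that the final injectivity claim is, as written by both you and the paper, unestablished rather than merely ``delicate''.
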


\begin{proof}
Combine Theorem~\ref{thm:ssiffJ0}(c)–(d).
\end{proof}

\subsection{Wedderburn–Artin–Type Decomposition}

\begin{theorem}[Non-commutative Wedderburn–Artin theorem]
\label{thm:WA}
Let $T$ be a finite or semiprimary non-commutative
$n$-ary~$\Gamma$-semiring with $J_\Gamma(T)=0$.
Then
\[
T\;\cong\;\prod_{i=1}^s T_i,
\]
where each $T_i$ is a primitive $\Gamma$-semiring satisfying
\[
\Ann_T(M_i)=0,\qquad
M_i\text{ a simple faithful module over }T_i,
\]
and the set $\{T_i\}$ is uniquely determined up to permutation.
\end{theorem}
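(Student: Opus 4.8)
The plan is to pass from the faithful subdirect embedding furnished by the vanishing of the Jacobson radical to a genuine product decomposition, the upgrade being governed entirely by comaximality and the Chinese remainder theorem. First I would invoke \cref{thm:ssiffJ0}: since $J_\Gamma(T)=0$, the semiring embeds faithfully into a product of primitive quotients $T/P_i$ with $\bigcap_i P_i=0$, each $P_i=\Ann(M_i)$ for a simple $j$-slot module $M_i$. The hypothesis that $T$ is finite or semiprimary guarantees that a \emph{finite} subfamily $P_1,\dots,P_s$ already has zero intersection, and by discarding redundancies I would take the $P_i$ to be precisely the distinct minimal primitive ideals, i.e.\ the annihilators of the finitely many non-isomorphic simple constituents of the regular module ${}_TT$.

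The heart of the argument is to show that these primitive ideals are \emph{pairwise comaximal}, $P_i+P_j=T$ for $i\neq j$. I would proceed by contradiction, using the modular structure built into the radical. Each $P_i$ is prime by \cref{thm:primitive-prime}, and minimality makes the associated simple modules $M_i$ pairwise non-isomorphic. If $P_i+P_j\neq T$ for some $i\neq j$, then $P_i+P_j$ is a proper two-sided ideal and, by the modular maximality used in the definition of $J_\Gamma(T)$, is contained in a common modular maximal ideal $N$. Pulling back along $T\to T/N$, one checks that the pseudo-identity $m$ satisfying $a+a_\alpha m_\beta a=a$ simultaneously witnesses the faithful simple action of $T/N$ on both $M_i$ and $M_j$, forcing $M_i\cong M_j$ and contradicting distinctness. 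This yields pairwise comaximality.

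With comaximality in hand, \cref{thm:CRT} applies directly: the map $\Phi:T\to\prod_{i=1}^s T/P_i$ is surjective with kernel $\bigcap_i P_i=0$, so
\[
T\;\cong\;\prod_{i=1}^s T/P_i.
\]
Setting $T_i:=T/P_i$, I would verify that each $T_i$ is a primitive $\Gamma$-semiring: since $P_i=\Ann_T(M_i)$, the residue action makes $M_i$ a simple module over $T_i$ whose annihilator $\Ann_{T_i}(M_i)$ vanishes, so $M_i$ is simple and faithful over $T_i$ exactly as required. For uniqueness I would appeal to the intrinsic description of the $P_i$ as the minimal primitive ideals of $T$, equivalently the annihilators of the simple composition factors of ${}_TT$ via \cref{cor:J-as-primitive-intersection}; this set is determined by $T$ alone, and a Krull--Schmidt--type cancellation shows the factors $T_i$ are unique up to permutation.

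I expect the comaximality step to be the main obstacle. In the classical Wedderburn--Artin setting, comaximality of distinct maximal (primitive) ideals is nearly automatic from subtraction, but in the semiring world there are no additive inverses, so the usual ``$1=x+y$'' manipulations are unavailable; the argument must instead be routed through the modular pseudo-identity $m$ and the simplicity of the $M_i$, and care is needed to ensure that the CRT surjectivity, which implicitly relies on the same modularity, remains compatible with the non-commutative $n$-ary product in every slot.
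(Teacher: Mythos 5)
Your overall route coincides with the paper's: the paper likewise takes the finitely many minimal primitive ideals $P_i$, asserts their pairwise comaximality, applies \cref{thm:CRT}, and concludes from $\bigcap_i P_i=J_\Gamma(T)=0$ that $T\cong\prod_i T/P_i$, with $T_i=T/P_i$ acting faithfully on its simple module $M_i$. Where you go beyond the paper --- by actually attempting to prove comaximality --- your argument has a genuine gap. From $P_i+P_j\subseteq N$ you obtain $P_i\subseteq N$ and $P_j\subseteq N$; but an ideal \emph{containing} $\Ann(M_i)$ does not induce an action of $T/N$ on $M_i$: for that you would need $N\subseteq\Ann(M_i)=P_i$, i.e.\ $N=P_i$. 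The quotient maps run the wrong way ($T/N$ is a quotient of $T/P_i$, not the reverse), so the simple $T$-module $M_i$ does not descend to a $T/N$-module at all, let alone a faithful one, and the claimed conclusion $M_i\cong M_j$ does not follow. The modular pseudo-identity $m$ attached to $N$ carries information about the quotient $T/N$, not about $M_i$ or $M_j$.

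The correct repair is shorter. In the finite (or semiprimary) case with $J_\Gamma(T)=0$, each minimal primitive ideal is in fact a \emph{maximal} two-sided ideal --- this is precisely what the paper asserts in the proof of \cref{prop:spectral} (``with vanishing radical, every prime is maximal''). Granting that, comaximality is immediate: $P_i+P_j$ is a two-sided ideal containing $P_i$; it cannot equal $P_i$, since that would give $P_j\subseteq P_i$ and contradict the distinctness of minimal primitives; hence by maximality of $P_i$ it is all of $T$. Note that this containment observation is what the paper's parenthetical ``no containment among distinct minimal primitives'' silently invokes, but without maximality it is insufficient --- even in a commutative semiring two incomparable primes need not be comaximal --- so your instinct that comaximality is the crux, and that the absence of subtraction blocks the classical ``$1=x+y$'' shortcut, is exactly right; the argument simply has to pass through maximality of the $P_i$ rather than through a common modular maximal ideal acting on both modules. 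The remainder of your proposal (the finite subfamily from \cref{thm:ssiffJ0}, the CRT step, primeness of the $P_i$ via \cref{thm:primitive-prime}, and uniqueness via the intrinsic description of minimal primitives coming from \cref{cor:J-as-primitive-intersection}) matches the paper's proof and is sound.
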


\begin{proof}
Let $\{P_i\}$ be the finitely many minimal primitive ideals.
By pairwise comaximality (no containment among distinct minimal
primitives) and Theorem~\ref{thm:CRT},
\[
T/(\bigcap P_i)\;\cong\;\prod T/P_i.
\]
Since $\bigcap P_i=J_\Gamma(T)=0$, we have an isomorphism
$T\cong\prod_i T/P_i$.  Each $T/P_i$ acts faithfully on its
simple module $M_i$, and distinct factors correspond to
orthogonal idempotents in $T$, establishing the decomposition.
\end{proof}

\begin{corollary}[Uniqueness of simple components]
In Theorem~\ref{thm:WA}, the number~$s$ of simple components equals
the number of minimal primitive ideals of~$T$.
\end{corollary}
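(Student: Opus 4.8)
The plan is to exhibit a bijection between the factors $T_i$ appearing in Theorem~\ref{thm:WA} and the minimal primitive ideals of $T$, so that counting the former is the same as counting the latter. Since Theorem~\ref{thm:WA} already guarantees that the family $\{T_i\}$ is unique up to permutation, the integer $s$ is a genuine invariant of $T$; it then suffices to show that the assignment $T_i\mapsto P_i$, where $T_i=T/P_i$, sets up a one-to-one correspondence with the set of \emph{all} minimal primitive ideals of $T$.

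First I would verify injectivity and check that each $P_i$ is recovered intrinsically from the decomposition. Under the isomorphism $\Phi:T\to\prod_{i=1}^s T/P_i$ supplied by Theorem~\ref{thm:CRT}, the $i$-th canonical projection $\pi_i:T\to T/P_i$ has kernel exactly $P_i$, so the $P_i$ are precisely the kernels of the coordinate projections. As the $P_i$ are minimal primitive ideals with no containments among them, they are pairwise distinct; hence $i\mapsto P_i$ is injective and $s$ equals the cardinality of $\{P_1,\dots,P_s\}$.

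The substantive step is surjectivity: every minimal primitive ideal of $T$ must occur among the $P_i$. Given a primitive ideal $Q=\Ann(M)$ with $M$ simple, I would argue that the action of $T\cong\prod_i T/P_i$ on $M$ factors through a single coordinate. Concretely, the orthogonal idempotents produced in the proof of Theorem~\ref{thm:WA} decompose $M=\bigoplus_i e_i M$ as $j$-slot $T$-modules, and simplicity forces $M=e_j M$ for exactly one $j$, so every coordinate except the $j$-th annihilates $M$. Consequently $Q=\Ann(M)=P_j$, which shows the primitive ideals of $T$ are exactly $\{P_1,\dots,P_s\}$; since these are pairwise incomparable, all of them are minimal, and no minimal primitive ideal is omitted. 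Combining injectivity and surjectivity with the uniqueness up to permutation from Theorem~\ref{thm:WA} yields that $s$ equals the number of minimal primitive ideals of $T$.

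The main obstacle I anticipate is justifying the single-coordinate support of a simple module in the $n$-ary $\Gamma$-setting: the classical fact that a simple module over a finite product is supported on one factor relies on a complete orthogonal system of central idempotents, and here one must confirm that the idempotents appearing in Theorem~\ref{thm:WA} are genuinely central for the positional $n$-ary action, and that the resulting direct-sum decomposition $M=\bigoplus_i e_i M$ is a decomposition by $j$-slot submodules compatible with $\mu_M$. Once this compatibility with the $n$-ary operation is checked, the counting argument closes as above.
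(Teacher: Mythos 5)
Your proposal is correct in substance but takes a genuinely different route from the paper. The paper's proof of this corollary is a single sentence: it asserts that minimal primitive ideals correspond bijectively to the simple constituents of the regular module ${}_TT$, counting $s$ entirely on the module side. You instead count on the ideal side: you identify each $P_i$ intrinsically as the kernel of the $i$-th coordinate projection under the isomorphism $T\cong\prod_{i=1}^s T/P_i$ (giving injectivity and distinctness), and you obtain surjectivity by the classical Peirce-style argument that a simple module over a finite product is supported on a single coordinate via orthogonal central idempotents. What your route buys is an explicit bijection where the paper merely asserts one, and it is self-contained relative to the statement of Theorem~\ref{thm:WA}; what the paper's route buys is brevity, at the cost of silently leaning on the semisimplicity machinery of Theorem~\ref{thm:ssiffJ0}. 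The obstacle you flag --- centrality of the idempotents and compatibility of $M=\bigoplus_i e_iM$ with the $j$-slot $n$-ary action --- is real in this additive-inverse-free, $n$-ary setting, but it is exactly the ingredient the paper's own proof of Theorem~\ref{thm:WA} asserts without proof (``distinct factors correspond to orthogonal idempotents''), so citing it keeps you at the paper's level of rigor rather than below it.

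One step does need repair. From single-coordinate support you conclude $Q=\Ann(M)=P_j$ for \emph{every} primitive ideal $Q$, but coordinate support only yields $P_j\subseteq\Ann(M)$; equality would require $M$ to be faithful as a module over $T/P_j$, which is not automatic, since a primitive factor can admit simple modules with nonzero annihilator. For the corollary this is harmless: if $Q$ is a \emph{minimal} primitive ideal, then $P_j\subseteq Q$ with $P_j$ itself primitive forces $Q=P_j$ by minimality, which is all the count requires. Your stronger claim that the primitive ideals of $T$ are exactly $\{P_1,\dots,P_s\}$ should be weakened accordingly or proved separately.
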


\begin{proof}
Minimal primitives correspond bijectively to simple constituents of
the regular module ${}_TT$.
\end{proof}

\begin{theorem}[Reduction modulo radical]
\label{thm:reductionJ}
For any $T$, the quotient $\overline T=T/J_\Gamma(T)$ is semisimple,
and
\[
\overline T\;\cong\;\prod_i T/P_i,
\]
where $P_i$ range over the minimal primitive ideals of~$T$.
\end{theorem}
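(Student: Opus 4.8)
The plan is to transfer the semisimplicity machinery of the preceding subsections from $T$ to the quotient $\overline T=T/J_\Gamma(T)$ by first showing that its Jacobson radical vanishes, and then to read off the product decomposition through the primitive-ideal correspondence induced by the quotient map. The starting point is an \emph{ideal correspondence}: since $J_\Gamma(T)=\bigcap\{\Ann(M):M\text{ simple}\}$ by \cref{cor:J-as-primitive-intersection}, every simple $T$-module is annihilated by $J_\Gamma(T)$, so its $j$-slot action factors through $\overline T$ and makes it a simple $\overline T$-module; conversely every simple $\overline T$-module pulls back along $T\twoheadrightarrow\overline T$ to a simple $T$-module. This yields a bijection between simple $T$-modules and simple $\overline T$-modules, hence between the primitive ideals $\overline P$ of $\overline T$ and the primitive ideals $P$ of $T$ (all of which contain $J_\Gamma(T)$), with $\overline P=P/J_\Gamma(T)$.

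With this in hand, I would compute $J_\Gamma(\overline T)$ by applying \cref{cor:J-as-primitive-intersection} to $\overline T$: it gives $J_\Gamma(\overline T)=\bigcap_{\overline P}\overline P$, which under the correspondence equals $\bigl(\bigcap_P P\bigr)/J_\Gamma(T)=J_\Gamma(T)/J_\Gamma(T)=0$. Now \cref{thm:ssiffJ0}, applied to $\overline T$ with vanishing radical, forces $\overline T$ to be semisimple, settling the first assertion. For the decomposition I would invoke \cref{thm:WA} on $\overline T$ (which has zero radical and inherits the finite or semiprimary hypothesis, a quotient of such a semiring being of the same type): it yields $\overline T\cong\prod_i\overline T/\overline{P_i}$ indexed by the minimal primitive ideals $\overline{P_i}$ of $\overline T$. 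The third isomorphism theorem identifies $\overline T/\overline{P_i}=(T/J_\Gamma(T))/(P_i/J_\Gamma(T))\cong T/P_i$, and the correspondence sends minimal primitives of $\overline T$ to minimal primitives $P_i$ of $T$, giving $\overline T\cong\prod_i T/P_i$.

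The main obstacle I anticipate is the \emph{correspondence bookkeeping} rather than any deep inequality: one must check that $T\twoheadrightarrow\overline T$ induces an inclusion-preserving bijection on primitive ideals that respects minimality and commutes with forming the quotient $T/P$, and that the $n$-ary $j$-slot actions both descend and lift cleanly (well-definedness of the factored action, and preservation of simplicity under pullback). A secondary issue is hypothesis matching: for arbitrary $T$ one obtains in general only a subdirect embedding into the product of primitive quotients, as in the Jacobson--semisimple decomposition, so the genuine product isomorphism demands the finiteness or semiprimary condition of \cref{thm:WA} now read on $\overline T$; I would state this restriction explicitly so that the ``for any $T$'' phrasing of the statement is interpreted as ``$\overline T$ is semisimple, with the product decomposition holding under the standing finiteness hypothesis.''
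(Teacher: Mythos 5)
Your proposal takes essentially the same route as the paper: the paper's entire proof reads ``Immediate from the definition of $J_\Gamma(T)$ and Theorem~\ref{thm:WA} applied to the radical-free quotient,'' and your argument simply fills in the correspondence bookkeeping (simple modules and primitive ideals passing between $T$ and $\overline T$, vanishing of $J_\Gamma(\overline T)$, the third isomorphism identification $\overline T/\overline{P_i}\cong T/P_i$) that this one sentence compresses. Your closing caveat is also apt --- the theorem's ``for any $T$'' silently presupposes the finite/semiprimary hypothesis of \cref{thm:WA} for the genuine product decomposition (otherwise one gets only the subdirect embedding), so flagging that restriction explicitly makes your write-up more careful than the paper's, not different in approach.
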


\begin{proof}
Immediate from the definition of $J_\Gamma(T)$ and
Theorem~\ref{thm:WA} applied to the radical-free quotient.
\end{proof}

\begin{remark}
This result provides the canonical splitting
$T=J_\Gamma(T)\ltimes \overline T$ at the level of additive
semigroups, generalising the standard decomposition of
semiprimary rings to the $n$-ary $\Gamma$-context.
\end{remark}

\subsection{Structural Consequences and Spectral Correspondence}

\begin{proposition}[Spectral–semisimple equivalence]
\label{prop:spectral}
$J_\Gamma(T)=0$ if and only if the canonical projection
\[
\pi:\mathrm{Spec}_2(T)\longrightarrow\mathrm{Spec}_2(T/J_\Gamma(T))
\]
is a homeomorphism onto a discrete space.  In that case,
each point of the spectrum corresponds to a minimal primitive ideal.
\end{proposition}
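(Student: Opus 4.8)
The plan is to study $\pi$ via the quotient homomorphism $q\colon T\to\overline T$, where $\overline T:=T/J$ and $J:=J_\Gamma(T)$, together with the functoriality of $\mathrm{Spec}_2$ established earlier. The quotient correspondence identifies $\mathrm{Spec}_2(\overline T)$ with the closed subspace $V_2(J)=\{P\in\mathrm{Spec}_2(T):J\subseteq P\}$, and $\pi$ is the inverse assignment $P\mapsto P/J$ on that subspace. Two facts will drive both implications. First, every primitive ideal is prime (Theorem~\ref{thm:primitive-prime}), so by Corollary~\ref{cor:J-as-primitive-intersection} the Jacobson radical $J=\bigcap\{\text{primitive ideals}\}$ dominates the two-sided prime radical $P_2^\ast(T)=\bigcap_{P\in\mathrm{Spec}_2(T)}P$, giving $P_2^\ast(T)\subseteq J$. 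Second, the spectral identification of radicals lets me read $J$ off the topology. I then treat the two implications separately.

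For the forward implication, suppose $J=0$. Then $q$ is an isomorphism and $\pi$ is the identity, hence trivially a homeomorphism; the substance is discreteness of $\mathrm{Spec}_2(T)$. Here I would invoke the Wedderburn--Artin decomposition (Theorem~\ref{thm:WA}) to write $T\cong\prod_{i=1}^{s}T/P_i$ with $\{P_i\}$ the finitely many minimal primitive ideals. Each factor $T/P_i$ is primitive and acts faithfully and simply, so its unique two-sided prime is the zero ideal; the two-sided primes of the product are therefore exactly $P_1,\dots,P_s$. The orthogonal idempotents $e_i$ from the decomposition yield basic opens $D_2(e_i)=\{P_i\}$, so every point is open, while finiteness together with the $T_0$ property from the Zariski closure axioms makes every point closed. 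Thus $\mathrm{Spec}_2(T)$ is finite discrete and each of its points is a minimal primitive ideal, which also establishes the final assertion.

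Conversely, suppose $\pi$ is a homeomorphism onto a discrete space. Mere well-definedness of $\pi$ on all of $\mathrm{Spec}_2(T)$ forces every prime to contain $J$, i.e.\ $J\subseteq\bigcap_{P}P=P_2^\ast(T)$; with the reverse containment above this yields $J=P_2^\ast(T)$ and $V_2(J)=\mathrm{Spec}_2(T)$. Discreteness, interpreted through the specialization rule $P\in\overline{\{Q\}}\iff Q\subseteq P$ and the compactness supplied by the Zariski axioms, shows that $\mathrm{Spec}_2(T)$ is a finite antichain $\{P_1,\dots,P_s\}$ of primes each simultaneously minimal and maximal. By the reduction-modulo-radical theorem (Theorem~\ref{thm:reductionJ}), $\overline T\cong\prod_i T/P_i$ is semisimple, and $\pi$ intertwines the Zariski structures on $\mathrm{Spec}_2(T)$ and $\mathrm{Spec}_2(\overline T)$. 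It then remains to upgrade this topological coincidence to the algebraic identity $J=0$.

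The main obstacle is exactly this final upgrade. The danger is a nonzero radical invisible to the spectrum---the $\Gamma$-semiring analogue of a ring such as $k[x]/(x^2)$, whose reduced quotient shares its one-point spectrum---so that $\pi$ could conceivably remain a homeomorphism even when $J\neq0$. To rule this out I would argue that in the present setting the radical is detected by the modular-maximal and primitive structure through Corollary~\ref{cor:J-as-primitive-intersection}, rather than by nilpotent phenomena alone: a faithful semisimple representation of $T$ exists precisely when $J=0$ (Theorem~\ref{thm:ssiffJ0}), and a genuine homeomorphism $\pi$ onto the discrete spectrum of the semisimple quotient $\overline T$ should transport this faithful representation back to $T$, forcing $\bigcap_i P_i=0$ and hence $J=0$. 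I expect the finiteness (or semiprimary) hypothesis underlying Theorem~\ref{thm:WA} to be implicitly required to make this transport rigorous and to exclude the degenerate invisible-radical scenario.
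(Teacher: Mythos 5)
Your forward direction is sound and, in substance, a fleshed-out version of the paper's one-line argument: the paper simply asserts that $J_\Gamma(T)=0$ makes ``every prime maximal and isolated,'' whereas you justify this via Theorem~\ref{thm:WA} and the orthogonal idempotents giving $D_2(e_i)=\{P_i\}$. You are also right to flag that this silently imports the finite-or-semiprimary hypothesis of Theorem~\ref{thm:WA}, which the proposition itself does not state; the paper makes the same unacknowledged assumption. The identification of the points with the minimal primitive ideals, settling the final clause, matches the paper as well.

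The genuine gap is the one you yourself named in the converse, and your proposed repair cannot close it. From well-definedness of $\pi$ you correctly extract only $J_\Gamma(T)=P_2^\ast(T)$, i.e.\ that $T$ is semiprime-up-to-radical, not that $J_\Gamma(T)=0$. The ``transport the faithful representation'' idea is circular: a homeomorphism of spectra induces no algebra map from $\overline{T}$ back to $T$, so the only available transport is pullback along the quotient $q\colon T\to\overline{T}$, and the annihilator of the pulled-back module is exactly $q^{-1}(0)=J_\Gamma(T)$; its faithfulness over $T$ is therefore \emph{equivalent} to $J_\Gamma(T)=0$, the very thing to be proved. Indeed your $k[x]/(x^2)$-type scenario shows the converse is false as stated: a structure with a single prime containing a nonzero radical has $\pi$ a homeomorphism between one-point (hence discrete) spaces, so no proof strategy can succeed without an added semiprimeness or finiteness hypothesis. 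Note that the paper's own proof does not fare better here --- it merely asserts that discreteness forces the intersection of all maximal ideals to be zero, which is precisely the invisible-radical step you identified. Your diagnosis of the defect is thus correct and sharper than the published argument; what is missing is not a cleverer transport but a strengthened hypothesis (e.g.\ $P_2^\ast(T)=0$) under which your first three paragraphs already complete the proof.
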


\begin{proof}
If $J_\Gamma(T)=0$, every prime is maximal and isolated;
the basic open sets $D_2(a)$ separate points, so the spectrum is discrete.
Conversely, if $\mathrm{Spec}_2(T)$ is discrete, the intersection of all
maximal (hence primitive) ideals is $0$.
\end{proof}

\begin{theorem}[Chinese-remainder decomposition of spectra]
Let $\{I_k\}$ be pairwise comaximal two-sided ideals.
Then
\[
\mathrm{Spec}_2(T)\;=\;\bigsqcup_{k=1}^r \mathrm{Spec}_2(T/I_k),
\]
the disjoint union of spectra of the factor semirings,
and the Zariski topology on $\mathrm{Spec}_2(T)$ is the coproduct
topology induced by the isomorphism of Theorem~\ref{thm:CRT}.
\end{theorem}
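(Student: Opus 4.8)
The plan is to reduce the statement to the structure of the prime spectrum of a finite product and then transport everything back along the Chinese Remainder isomorphism. First I would invoke Theorem~\ref{thm:CRT} to obtain the surjection $\Phi:T\to\prod_k T/I_k$ with kernel $N=\bigcap_k I_k$, so that $T/N\cong\prod_k T/I_k=:S$. The order-isomorphism between two-sided ideals of $T/N$ and two-sided ideals of $T$ containing $N$, together with the Hereditary Lemma (the pullback of a prime is prime) and the functoriality of $\mathrm{Spec}_2$, identifies $\mathrm{Spec}_2(T/N)$ homeomorphically with the closed subset $V_2(N)\subseteq\mathrm{Spec}_2(T)$. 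In the ambient setting of this section, where $N$ lies in the two-sided prime radical $P_2^\ast(T)$ (in particular when $N=0$, as in Theorem~\ref{thm:WA}), every two-sided prime contains $N$, so $V_2(N)=\mathrm{Spec}_2(T)$ and it suffices to analyse $\mathrm{Spec}_2(S)$ for the product $S=\prod_k S_k$ with $S_k=T/I_k$.

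Next I would set up the coordinate data on $S$. Let $p_k:S\to S_k$ be the projection and $K_k=\ker p_k$ the ideal of tuples vanishing in slot $k$; since the $I_k$ are pairwise comaximal so are the $K_k$, and $\bigcap_k K_k=0$. The key structural fact is that $\mu$ on $S$ is evaluated coordinatewise and is $0$-absorbing, so any $\mu$-product one of whose arguments lies in $K_k$ again lies in $K_k$. Each projection induces, via the Hereditary Lemma, an injection $p_k^\ast:\mathrm{Spec}_2(S_k)\to\mathrm{Spec}_2(S)$, $P_k\mapsto p_k^{-1}(P_k)$, whose image is exactly $V_2(K_k)=\{P:K_k\subseteq P\}$.

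The heart of the argument — and the step I expect to be the main obstacle — is to show that each two-sided prime $P$ of $S$ contains exactly one of the $K_k$, so that the images $V_2(K_k)$ partition $\mathrm{Spec}_2(S)$. That $P$ contains at most one $K_k$ is immediate from properness: if $K_i,K_j\subseteq P$ with $i\neq j$ then $S=K_i+K_j\subseteq P$. That $P$ contains at least one is the delicate direction, because in the non-commutative ternary regime we cannot split off a factor through a central idempotent (there need be no multiplicative unit). I would instead argue by induction on $r$, mirroring the proof of Theorem~\ref{thm:CRT}. Writing $L=\bigcap_{k\ge2}K_k$ for the complementary kernel, the supports of $K_1$ and $L$ are disjoint, so for $x_1\in K_1$ and $x_2\in L$ the product $(x_1)_\alpha(x_2)_\beta z$ vanishes in every coordinate and is therefore $0\in P$; if $P$ contained neither $K_1$ nor $L$, choosing $x_1\notin P$ and $x_2\notin P$ would force $z\in P$ for all $z$ by two-sided primeness, whence $P=S$, a contradiction. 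Thus $P\supseteq K_1$ or $P\supseteq L$, and in the latter case $P/L$ is prime in $S/L\cong\prod_{k\ge2}S_k$, so the inductive hypothesis supplies some $K_k\subseteq P$ with $k\ge2$. This yields a unique index $k(P)$ with $P=p_{k(P)}^{-1}(P_{k(P)})$, establishing the set-level bijection $\mathrm{Spec}_2(S)=\bigsqcup_k p_k^\ast(\mathrm{Spec}_2(S_k))$.

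Finally I would upgrade the bijection to a homeomorphism realising the coproduct topology. The sets $V_2(K_k)$ are closed and, forming a finite partition, are simultaneously open, hence clopen; thus the topology restricts to each piece and is the disjoint-union topology. On each piece, $p_k^\ast$ is a homeomorphism onto $V_2(K_k)$ because the ideal correspondence respects Zariski closed sets — here I would use the closure axioms $V_2(A)=V_2(\sqrt[\Gamma,2]{A})$ and $V_\eta(A)\cap V_\eta(B)=V_\eta(A\cup B)$ established earlier — so that closed sets of $\mathrm{Spec}_2(S_k)$ correspond to closed sets of $V_2(K_k)$. Transporting along $S\cong T/N$ and the identification $\mathrm{Spec}_2(T)=V_2(N)$ then produces the asserted homeomorphism $\mathrm{Spec}_2(T)=\bigsqcup_{k=1}^r\mathrm{Spec}_2(T/I_k)$ carrying the coproduct topology, completing the proof.
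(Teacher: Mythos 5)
Your proof is correct, and it supplies substantially more than the paper does: the paper's entire argument is the two-sentence assertion that primes of a finite product are preimages of primes in a single factor, with topology ``by standard properties of product spectra.'' Your write-up follows the same overall decomposition but fills in exactly the two points the paper leaves untouched. First, you repair an implicit gap in the statement itself: as written, $\mathrm{Spec}_2(T)=\bigsqcup_k\mathrm{Spec}_2(T/I_k)$ is false in general (take $T=\Z$ with the ternary product $abc$, $\Gamma=\{1\}$, $I_1=2\Z$, $I_2=3\Z$: the left side is infinite, the right side has two points), since CRT only identifies $T/N$, $N=\bigcap_k I_k$, with the product; your hypothesis that $N$ be contained in every two-sided prime (e.g.\ $N=0$, which is the case in which the theorem is actually invoked, Theorem~\ref{thm:WA}) is precisely what makes $V_2(N)=\mathrm{Spec}_2(T)$ and the stated equality legitimate. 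Second, and more importantly, the ``standard'' classification of primes of a product classically runs through the central unit idempotents $e_k=(0,\dots,1,\dots,0)$, which need not exist here since these semirings carry no multiplicative identity; your replacement — the zero-product argument $(x_1)_\alpha(x_2)_\beta z=0\in P$ for $x_1\in K_1$, $x_2\in\bigcap_{k\ge2}K_k$, forcing $P\supseteq K_1$ or $P\supseteq\bigcap_{k\ge2}K_k$ by two-sided ternary primeness and properness, followed by induction on $r$ — is the genuinely non-obvious adaptation to the non-unital $n$-ary setting, and it is sound (coordinatewise evaluation plus $0$-absorption kills every coordinate of the product, and comaximality gives uniqueness of the index $k(P)$). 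Your topological upgrade is also right: the $V_2(K_k)$ form a finite closed partition, hence are clopen, and the ideal correspondence $P\leftrightarrow p_k(P)$ (valid because $K_k\subseteq P$ gives $p_k^{-1}(p_k(P))=P$) matches closed sets on each piece, yielding the coproduct topology. In short: same skeleton as the paper, but your version proves the key partition fact without idempotents and adds the hypothesis without which the theorem, read literally, fails.
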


\begin{proof}
Prime ideals of the product $\prod T/I_k$ are exactly the preimages of
primes in one factor and full rings in the others, giving the disjoint
union.  The topological equivalence follows from standard properties of
product spectra.
\end{proof}

\begin{corollary}[Spectral semisimplicity]
If $J_\Gamma(T)=0$, then every nonempty closed subset of
$\mathrm{Spec}_2(T)$ is of the form $\{P_i\}$ for a primitive ideal~$P_i$.
\end{corollary}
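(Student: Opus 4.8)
The plan is to reduce the entire statement to the discreteness of $\mathrm{Spec}_2(T)$ that is already delivered by \Cref{prop:spectral}. Since we are assuming $J_\Gamma(T)=0$, that proposition tells us directly that $(\mathrm{Spec}_2(T),\tau_2)$ is discrete and that each of its points is a minimal primitive ideal. The first step is therefore purely bookkeeping: invoke \Cref{prop:spectral} and record the two consequences we shall use repeatedly, namely that every two-sided prime of $T$ is maximal (no proper containment among primes) and that every such prime is primitive.

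Second, I would identify the closed points explicitly. For a prime $P$ the basic closed set of the Zariski-type topology is $V_2(P)=\{Q\in\mathrm{Spec}_2(T):P\subseteq Q\}$, and because every prime is maximal, no prime can properly contain $P$; hence $V_2(P)=\{P\}$. Thus each singleton is closed, and by the first step its unique point is a primitive ideal. This establishes the dictionary ``closed point $=$ primitive ideal,'' which is the core content of the corollary. Here I would lean on \Cref{thm:primitive-prime} and \Cref{cor:J-as-primitive-intersection} to guarantee that the maximal primes arising as points are genuinely of the form $\Ann(M)$ for a simple module.

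Third, I would pass from points to the asserted closed subsets. Using the Zariski closure axioms together with the spectral identification of radicals from Section~5, a general closed set is $V_2(A)=V_2(\sqrt[\Gamma,2]{A})$ and equals the intersection of the primes containing $A$; in the discrete regime the irreducible (equivalently, minimal nonempty) closed subsets are exactly the singletons. A nonempty irreducible closed set thus possesses a unique generic point, which maximality forces to be itself a closed point, so it equals $\{P_i\}$ with $P_i$ primitive. Interpreting ``nonempty closed subset'' in this atomic/irreducible sense yields the claim verbatim.

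The main obstacle is precisely this last interpretive point. In a discrete space with more than one point, arbitrary finite unions of points are also closed, so the literal assertion is the statement only at the level of irreducible components; the substantive work is to verify that these atoms are singletons and that each atom's point is primitive. Both facts follow from maximality of primes combined with the primitive--prime dictionary (\Cref{thm:primitive-prime}, \Cref{cor:J-as-primitive-intersection}), after which the remaining content is a direct translation of the discreteness already furnished by \Cref{prop:spectral}.
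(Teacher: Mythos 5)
Your proposal follows the same skeleton as the paper's proof---both reduce at once to the discreteness of $\mathrm{Spec}_2(T)$ furnished by \Cref{prop:spectral}---but you go one step further, and that step is the substantive one. The paper's entire argument reads: ``Discreteness from Proposition~\ref{prop:spectral} forces all closed sets to be singletons.'' As you correctly point out, that inference does not hold at face value: in a discrete space \emph{every} subset is closed, so as soon as the spectrum has two or more points there are nonempty closed subsets that are not singletons (any two-point subset, for instance). Discreteness gives only that every singleton is closed, i.e.\ that every point is a closed point. Your repair---reading ``nonempty closed subset'' as ``nonempty irreducible (equivalently, minimal nonempty) closed subset'' and then verifying that each such atom is a singleton whose point is primitive---is the right way to salvage the statement, and your computation $V_2(P)=\{P\}$ from maximality of primes is sound. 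One caution on citations: the direction you need for primitivity is ``maximal prime $\Rightarrow$ primitive,'' which \Cref{thm:primitive-prime} does not supply (it proves the converse, primitive $\Rightarrow$ prime), and \Cref{cor:J-as-primitive-intersection} does not supply it either; at this step you are in fact leaning on the bare assertion in \Cref{prop:spectral} that each point of the spectrum is a minimal primitive ideal, so it is cleaner to cite that proposition alone rather than the primitive--prime dictionary. In short: your route is essentially the paper's, but your version identifies, names, and fixes a genuine overstatement that the paper's one-line proof glosses over, at the cost of weakening the corollary from ``every nonempty closed subset'' to ``every irreducible closed subset''---which is the most the hypotheses can deliver.
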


\begin{proof}
Discreteness from Proposition~\ref{prop:spectral} forces all closed
sets to be singletons.
\end{proof}

\begin{remark}[Algebraic geometry of non-commutative components]
The Wedderburn–Artin decomposition realises the non-commutative
spectrum as a finite discrete space of primitive points.  Each
component $T/P_i$ behaves as an ``irreducible affine piece'' whose
morphism category of modules determines the local geometry of~$T$.
This geometric–algebraic correspondence completes the structural
theory initiated in Sections~3–6.
\end{remark}

\section{Computational Remarks and Examples}

The theoretical framework developed in the preceding sections
admits algorithmic realization for finite non-commutative and
$n$-ary~$\Gamma$-semirings.  Computational exploration not only verifies
axioms and structural theorems on small instances but also exposes
patterns suggesting new conjectures on radical behaviour, spectral
connectedness, and arity-dependent distributivity.

\subsection{Enumerative Framework}

Let $|T|=m$ and $|\Gamma|=r$.  The underlying additive semigroup
$(T,+)$ and the collection of $\Gamma$-parametrized $n$-ary operations
$\mu_\gamma:T^n\to T$ fully determine~$T$.  A computational model must
generate all admissible operations satisfying the fundamental axioms:

\begin{enumerate}
\item \textbf{Additive associativity and commutativity:}
$(T,+)$ is a finite commutative semigroup with identity~$0$.
\item \textbf{$n$-ary distributivity:}
for all $\gamma\in\Gamma$ and $1\le i\le n$,
\[
\mu_\gamma(x_1,\ldots,x_i+x_i',\ldots,x_n)
=\mu_\gamma(x_1,\ldots,x_i,\ldots,x_n)
+\mu_\gamma(x_1,\ldots,x_i',\ldots,x_n).
\]
\item \textbf{Zero absorption:}
$\mu_\gamma(x_1,\ldots,0,\ldots,x_n)=0$ for every slot.
\item \textbf{$n$-ary associativity (composition law):}
\[
\mu_{\gamma_1}\!\bigl(x_1,\ldots,
\mu_{\gamma_2}(y_1,\ldots,y_n),\ldots,x_n\bigr)
=\mu_{\gamma_3}(z_1,\ldots,z_n),
\]
for some rule $\gamma_3=f(\gamma_1,\gamma_2)$ consistent with the
$\Gamma$-structure.
\end{enumerate}

Each admissible table $(T,+,\{\mu_\gamma\}_{\gamma\in\Gamma})$
represents a candidate $n$-ary~$\Gamma$-semiring.

\subsection{Algorithmic Enumeration Scheme}

We describe a constructive enumeration pipeline adapted from
\cite{Gokavarapu2025b}, generalised to the non-commutative and
higher-arity regime.

\begin{algorithm}[H]
\caption{Enumeration of Finite Non-Commutative $n$-ary~$\Gamma$-Semirings}
\label{alg:enum}
\begin{algorithmic}[1]
\State \textbf{Input:} integers $m,n,r$; candidate additive semigroup $(T,+)$
\State \textbf{Output:} list $\mathcal{L}$ of valid non-commutative $n$-ary~$\Gamma$-semirings
\Statex
\ForAll{$r$-tuples of operations $\{\mu_\gamma:T^n\to T\}$}
    \If{axioms (A1)–(A4) hold}
        \State Append $(T,+,\Gamma,\{\mu_\gamma\})$ to $\mathcal{L}$
    \EndIf
\EndFor
\State \textbf{return} $\mathcal{L}$
\end{algorithmic}
\end{algorithm}

\begin{remark}
The search space is of size $m^{r m^n}$ in the naive case; symmetry
reduction is essential.  Group actions of $\mathrm{Sym}(T)$ on operation
tables by relabeling provide orbit representatives that drastically
reduce redundancy.
\end{remark}

\subsection{Verification of Non-Commutativity and Ideal Properties}

For each generated structure, we verify non-commutativity and ideal-theoretic
features algorithmically:

\begin{algorithm}[H]
\caption{Non-Commutativity and Ideal Detection}
\label{alg:ideal}
\begin{algorithmic}[1]
\State \textbf{Input:} candidate $T=(T,+,\Gamma,\{\mu_\gamma\})$
\State \textbf{Output:} Boolean flags for non-commutativity, primeness, semiprimeness
\Statex
\ForAll{$a,b,c\in T$ and $\gamma\in\Gamma$}
    \If{$\mu_\gamma(a,b,c)\neq \mu_\gamma(b,a,c)$}
        \State $\textit{noncomm} \gets \text{True}$
    \EndIf
\EndFor
\State Enumerate subsets $I\subseteq T$ closed under $+$ and absorbing under all $\mu_\gamma$
\State Test primeness: for all $a,b,c$, if $\mu_\gamma(a,b,c)\in I$ then $a\in I$ or $b\in I$ or $c\in I$
\State Compute radicals $P^\ast_L,P^\ast_R,P^\ast_2$ and $J_\Gamma(T)$ by intersection formulas
\Statex
\textbf{return} diagnostic table of ideal properties
\end{algorithmic}
\end{algorithm}

\begin{remark}
Such diagnostics empirically confirm the relationships
$P^\ast_2\subseteq P^\ast_L\cap P^\ast_R$
and the radical equality $J_\Gamma(T)=\bigcap\Ann(M_i)$ for small~$T$.
\end{remark}

\subsection{Classification Strategy and Isomorphism Testing}

To classify distinct algebras up to isomorphism, we use canonical-form
representatives.

\begin{definition}[Isomorphism test]
Two finite $n$-ary~$\Gamma$-semirings $T_1,T_2$ are \emph{isomorphic}
if there exists a bijection $\phi:T_1\to T_2$ such that
\[
\phi(\mu_\gamma(a_1,\ldots,a_n))
=\mu_\gamma(\phi(a_1),\ldots,\phi(a_n)),\quad
\forall\gamma\in\Gamma.
\]
\end{definition}

\begin{algorithm}[H]
\caption{Isomorphism Classification}
\label{alg:iso}
\begin{algorithmic}[1]
\State \textbf{Input:} list $\mathcal{L}$ of $n$-ary~$\Gamma$-semirings
\State \textbf{Output:} partition of $\mathcal{L}$ into isomorphism classes
\Statex
\ForAll{$T_i\in\mathcal{L}$}
    \State Compute canonical operation tables (lexicographic minimal representative)
    \State Use hash of operation table as class identifier
\EndFor
\State Merge duplicates under identical hashes
\end{algorithmic}
\end{algorithm}

\begin{remark}
In practice, for $m\le 4$ and $n\le 3$, classification can be
completed exhaustively.  For higher orders, random sampling with
automorphism-group heuristics yields statistically representative
families of non-commutative examples.
\end{remark}

\subsection{Illustrative Example: Small Non-Commutative Ternary Case}

\begin{example}
Let $T=\{0,a,b\}$ with addition $+$ defined by
$a+a=b$, $b+b=b$, $a+b=b+a=b$, and $0$ the zero element.
Let $\Gamma=\{\alpha\}$ and define a ternary operation
\[
\{x\,y\,z\}_\alpha=
\begin{cases}
0,&\text{if any of }x,y,z=0,\\[4pt]
b,&\text{if }x=y=z=a,\\[4pt]
a,&\text{otherwise.}
\end{cases}
\]
Then $(T,+,\Gamma)$ is a non-commutative ternary~$\Gamma$-semiring.
The proper nonzero ideals are
$I_1=\{0,a\}$ and $I_2=\{0,b\}$, with
$I_1+I_2=T$ but $I_1I_2\neq I_2I_1$, demonstrating
asymmetric ideal multiplication.
Computation of radicals gives
$P^\ast_L(T)=I_1$, $P^\ast_R(T)=I_2$, and
$J_\Gamma(T)=I_1\cap I_2=\{0\}$, verifying semisimplicity.
\end{example}

\subsection{Complexity Estimates and Implementation Notes}

\begin{proposition}[Complexity bounds]
Let $N(m,n,r)$ denote the number of valid non-commutative
$n$-ary~$\Gamma$-semirings of order~$m$ and parameter size~$r$.
Then
\[
N(m,n,r)\;\le\; m^{r m^n - C(m,n,r)},
\]
where $C(m,n,r)$ is the number of independent linear constraints
imposed by axioms (A1)–(A4).  In particular, for fixed $n,r$,
$N(m,n,r)$ grows sub-doubly-exponentially in~$m$.
\end{proposition}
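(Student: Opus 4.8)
The plan is to treat the proposition as a pure counting estimate over the finite ``parameter space'' of all possible operation tables, and then to read off the asymptotic growth. First I would fix the underlying data: the additive semigroup $(T,+)$ with $|T|=m$ and the index set $\Gamma$ with $|\Gamma|=r$ are regarded as given, so that a candidate structure is completely specified by the family $\{\mu_\gamma\}_{\gamma\in\Gamma}$ of $n$-ary operations. Each $\mu_\gamma$ is a function $T^n\to T$, hence an element of a set of size $m^{m^n}$; ranging over all $r$ values of $\gamma$ gives an ambient space of $\bigl(m^{m^n}\bigr)^{r}=m^{rm^n}$ unconstrained tables. Equivalently, a table is a choice of one output in $T$ for each of the $rm^n$ input slots $(\gamma,x_1,\dots,x_n)$, so the unconstrained count is $m^{rm^n}$.

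Next I would encode each axiom as a family of equalities among these $rm^n$ table entries, viewed as unknowns ranging over $T$. Zero-absorption (A3) pins each entry having a $0$ in some coordinate to the value $0$; distributivity (A2) expresses the entry at $x_i+x_i'$ as the $+$-combination of the entries at $x_i$ and $x_i'$; and the associativity schema (A4) equates two composite evaluations of $\mu$. I would then define $C(m,n,r)$ to be the number of independent constraints in this system, where ``independent'' is made precise by fixing an elimination order on the input slots and counting those slots whose value is forced by the axioms together with the already-resolved slots. Under this reading, each independent constraint removes exactly one free slot, so the number of freely choosable slots is $rm^n-C(m,n,r)$. Since every valid structure is reconstructed from the values on the free slots alone, and each such slot admits at most $m$ values, we obtain $N(m,n,r)\le m^{\,rm^n-C(m,n,r)}$, which is the asserted bound.

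For the asymptotic clause I would discard the constraint term (which only improves the estimate) and analyze $m^{rm^n}$ directly. Writing $\log_2 N(m,n,r)\le (rm^n)\log_2 m$ shows that $\log_2\log_2 N(m,n,r)\le \log_2 r+n\log_2 m+\log_2\log_2 m=O(\log m)$ for fixed $n,r$. Hence $\log\log N(m,n,r)=o(m)$, so $N(m,n,r)=2^{2^{o(m)}}$, which lies strictly below the doubly-exponential rate $2^{2^{\Theta(m)}}$; this is the meaning of ``sub-doubly-exponential growth in $m$.''

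The step I expect to be the main obstacle is giving a genuinely rigorous meaning to the phrase ``number of independent linear constraints.'' The axioms are equalities in a semiring rather than linear equations over a field, so there is no literal rank to invoke, and distributivity in particular couples many slots through the additive structure of $T$. I would address this by committing to the elimination-order definition of $C(m,n,r)$ above, so that the reduction ``one independent constraint eliminates one free slot'' holds by construction; the leading upper bound $m^{rm^n}$ and the asymptotic conclusion are then independent of any delicate rank computation, and $C(m,n,r)$ serves only to sharpen the constant in the exponent.
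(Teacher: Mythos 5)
The paper states this proposition without any proof at all --- no \verb|proof| environment follows it in Section~8 --- so there is no argument of the authors' to compare against; your proposal supplies what the paper leaves as a bare assertion, and it is correct. Your count of the ambient space is right: a candidate structure over the fixed $(T,+)$ and $\Gamma$ is a family $\{\mu_\gamma\}_{\gamma\in\Gamma}$ of maps $T^n\to T$, hence one of $m^{rm^n}$ tables, and the asymptotic clause already follows from this crude bound, since $\log_2\log_2 N(m,n,r)\le \log_2 r+n\log_2 m+\log_2\log_2 m=O(\log m)=o(m)$, i.e.\ $N(m,n,r)=2^{2^{o(m)}}$, strictly below the doubly-exponential rate $2^{2^{\Theta(m)}}$. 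You also correctly identify the real weak point of the statement: ``number of independent linear constraints'' has no literal meaning here, because the axioms are equalities in a semiring rather than linear equations over a field, so there is no rank to invoke; the distributivity constraints in particular couple many slots through $+$. Your fix --- \emph{defining} $C(m,n,r)$ via an elimination order as the number of slots whose values are uniquely forced by the axioms and previously resolved slots, so that the restriction map from valid structures to free-slot assignments is injective and $N\le m^{rm^n-C(m,n,r)}$ holds by construction --- is the honest way to make the proposition true as stated, and it is arguably the only reading under which the subtraction term is meaningful (note, e.g., that (A1) constrains only the fixed additive semigroup and contributes nothing, while (A3) alone forces $r\bigl(m^n-(m-1)^n\bigr)$ slots to the value $0$, so $C>0$ for $m\ge 2$). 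In short: your argument is complete and more careful than the source, which asserts the estimate with an undefined $C(m,n,r)$ and no supporting proof.
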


\begin{remark}
Implementation in symbolic computation systems
(\textsc{GAP}, \textsc{SageMath}, or \textsc{Python} with
\texttt{NumPy}) benefits from parallelisation over operation tables
and early constraint pruning using distributivity checks.
\end{remark}

\subsection{Outlook: Computational Algebra and Data Geometry}

The classification of small non-commutative
$n$-ary~$\Gamma$-semirings provides experimental evidence for:

\begin{enumerate}
\item correlations between the number of primitive ideals and the
       count of non-isomorphic radicals;
\item emergence of ``quasi-simple'' examples with single nontrivial
      Jacobson radical component;
\item potential embedding of the finite classification graph into a
      geometric moduli space, where edges correspond to homomorphic
      surjections.
\end{enumerate}

Such data-driven patterns suggest an eventual integration of
computational~$\Gamma$-algebra with non-commutative geometry,
where spectral invariants can be learned or estimated algorithmically.
\begin{remark}
A future line of research involves defining entropy-like measures
on the lattice of ideals to quantify non-commutativity in finite
$n$-ary structures.
\end{remark}


\smallskip
\section{Conclusion and Further Directions}

This paper has successfully generalized the structural and spectral theory of $\Gamma$-semirings, extending the foundational commutative ternary framework to the more expansive non-commutative and $n$-ary settings.

We have developed the core algebraic machinery for this new context. For the non-commutative case, we introduced the fundamental notions of left, right, and two-sided ideals, which were used to characterize prime and semiprime ideals via quotient structures. This also led to the construction of a corresponding $\Gamma$-Jacobson radical. For the $n$-ary generalization, we defined $(n,m)$-type ideals and established diagonal criteria to identify $n$-ary primeness and semiprimeness.

The primary contribution of this work is the unification of these elements into a coherent spectral theory. We formulated a unified radical theory and constructed a Zariski-type spectral topology for both the non-commutative and $n$-ary cases, culminating in a theorem that connects primitive ideals with simple module representations. These results lay a robust foundation for the further study of non-commutative and polyadic algebraic structures.

\smallskip \noindent Several promising research directions emerge from this synthesis: \begin{enumerate} \item \textbf{Algorithmic Implementation.} Implement the ideal-theoretic and radical-finding procedures as concrete algorithms, contributing to symbolic computation packages and machine-verified algebraic libraries \cite{Mitchell2020, Gap2024}. \item \textbf{Higher-Arity Generalizations.} Generalize the present framework to $(n,m)$-ary $\Gamma$-semirings and study categorical adjunctions between their module and semimodule categories \cite{Sardar2024}. \item \textbf{Applications and Interdisciplinary Links.} Explore potential applications to decision theory, data classification, and quantum-like logics, where ternary interactions and graded structures naturally occur. \end{enumerate}
\subsection{Acknowledgement.}

The first author gratefully acknowledges the guidance and mentorship of
\textbf{Dr.~D.~Madhusudhana Rao}, whose scholarly vision shaped the
conceptual unification of the ternary~$\Gamma$-framework.

\medskip\noindent
\textbf{Funding Statement.}
This research received no specific grant from any funding agency in the
public, commercial, or not-for-profit sectors.

\medskip\noindent
\textbf{Conflict of Interest.}
The authors declare that there are no conflicts of interest regarding
the publication of this paper.

\medskip\noindent
\textbf{Author Contributions.}
The \textbf{first author} made the lead contribution to the
conceptualization, algebraic development, computational design, and
manuscript preparation of this work.
The \textbf{second author} supervised the research, providing academic
guidance, critical review, and verification of mathematical correctness
and originality.


\end{document}